\newcommand{\al}{\alpha}
\newcommand{\ga}{\gamma}
\newcommand{\de}{\delta}
\newcommand{\la}{\lambda}
\newcommand{\eps}{\varepsilon}
\newcommand{\vv}{\varphi}
\theoremstyle{plain}
\numberwithin{equation}{section}
\newtheorem{thm}{Theorem}[section]
\newtheorem{lem}[thm]{Lemma}
\newtheorem{prop}[thm]{Proposition}
\newtheorem{cor}[thm]{Corollary}
\theoremstyle{definition}
\newtheorem{example}[thm]{Example}
\newtheorem{ip}[thm]{Inverse Problem}
\newtheorem{ap}[thm]{Auxiliary Problem}
\newtheorem{defin}[thm]{Definition}
\newtheorem{alg}[thm]{Algorithm}
\newtheorem*{quest}{Question}
\theoremstyle{remark}
\begin{document}

\begin{center}
{\Large\bf Stability of the inverse Sturm-Liouville problem\\[0.2cm] on a quantum tree}
\\[0.5cm]
{\bf Natalia P. Bondarenko}
\end{center}

\vspace{0.5cm}

{\bf Abstract.} This paper deals with the Sturm-Liouville operators with distribution potentials of the space $W_2^{-1}$ on a metric tree. We study an inverse spectral problem that consists in the recovery of the potentials from the characteristic functions related to various boundary conditions. We prove the uniform stability of this inverse problem for potentials in a ball of any fixed radius, as well as the local stability under small perturbations of the spectral data. Our approach is based on a stable algorithm for the unique reconstruction of the potentials relying on the ideas of the method of spectral mappings.

\medskip

{\bf Keywords:} inverse Sturm-Liouville problem; quantum tree; stability; method of spectral mappings. 

\medskip

{\bf AMS Mathematics Subject Classification (2020):} 34A55 34B09 34B45 34L40

\section{Introduction} \label{sec:intr}

This paper aims to prove the stability of the inverse Sturm-Liouville problem on a metric tree (i.e. graph without cycles). 
Inverse problems of spectral analysis consist in the reconstruction of differential operators from their spectral characteristics. The basic results of the inverse spectral theory have been obtained for the Sturm-Liouville operators $-y'' + q(x)y$ on finite and infinite intervals (see the monographs \cite{Mar77, Lev84, FY01, Krav20} and references therein).

Differential operators on graphs, also called quantum graphs, are given by the three components: (i) a metric graph, (ii) differential expressions on the graph edges, and (iii) matching conditions at the graph vertices. Spectral problems for such operators arise in numerous applications including organic chemistry, mesoscopic physics and nanotechnology, the theories of photonic crystals, waveguides, quantum chaos (see \cite{Kuch02, PPP04, BCFK06} and references therein). 

Inverse Sturm-Liouville problems that consist in the reconstruction of the potentials on the edges of quantum trees have been investigated in \cite{Bel04, BW05, Yur05, BV06, AK08, ALM10, Bond19} and other studies. In particular, Belishev \cite{Bel04} was the first to prove the uniqueness for solution of an inverse problem on an arbitrary tree. He used the eigenvalues and the derivatives of the eigenfunctions at the boundary vertices as the spectral data. Brown and Weikard \cite{BW05} showed that potentials are uniquely determined by
the Dirichlet-to-Neumann map (i.e. the Weyl matrix), which is equivalent to the spectral data in \cite{Bel04}. 
Yurko \cite{Yur05} has obtained the uniqueness by the minimal amount of data consisting only of the diagonal elements of the Weyl matrix. Moreover, Yurko has developed a constructive approach to solving inverse spectral problems on graphs, based on the method of spectral mappings \cite{Yur02}. Later, this approach was extended to arbitrary compact graphs \cite{Yur10}, as well as to a wide class of noncompact graphs \cite{Ign15}. A more detailed overview of inverse spectral theory for quantum graphs can be found in \cite{Yur16, Kur24}. Among recent advances, we mention the study \cite{But23} of the inverse spectral problem for a nonlocal functional-differential operator on a graph and the numerical reconstruction of potentials on quantum trees in \cite{AKK24}. Thus, although constructive procedures for solving inverse problems on graphs are well-understood, there is a lack of stability analysis. In the recent study \cite{Bond25}, the author has investigated the stability of the inverse Sturm-Liouville problem on a simple graph with a loop. This paper aims to fill the gap for arbitrary tree graphs, whose geometrical structure is definitely more complicated.

In this paper, we consider the Sturm-Liouville problems with distribution potentials of the class $W_2^{-1}$ on a metric tree. 
This class of potentials is wider than the usual $L_1$. In particular, the space $W_2^{-1}$ contains perturbations by the Dirac delta-functions and the Coulomb-type singularities $\frac{1}{x}$, which arise in quantum mechanics (see \cite{Alb05}). Four approaches to defining Sturm-Liouville operators with distribution potentials are described in \cite{SS03}. The Sturm-Lioville operators with $W_2^1$-potentials on metric graphs were considered in \cite{HH20} and inverse problems for such operators, in \cite{FIY08, Vas19, Bond19, Bond21-mmas, Bond21-amp}. Basing on the ideas of those studies, we develop a stable algorithm for the unique reconstruction of the potential from spectral data on a metric tree of arbitrary structure. Moreover, we prove the uniform stability of the inverse problem for potentials in a ball of any fixed radius.

An important role in our investigation is played by analytical characteristic functions, whose zeros coincide with eigenvalues of various Sturm-Liouville problems on the whole tree and on its subtrees. As spectral data, we use characteristic functions related to quantum trees with various boundary conditions. Our problem statement generalizes the two-spectra inverse Sturm-Liouville problem by Borg \cite{Borg46} and the inverse problem by Yurko \cite{Yur05} on a quantum tree with regular potentials. Our stability analysis relies on the Lipschitz continuity of the characteristic functions on the tree with respect to the potentials in suitable norms. This auxiliary result is obtained by using transformation operators that were constructed in \cite{HM04, Bond21, KB25} for the solutions of the Sturm-Liouville equation with $W_2^{-1}$-potential.

Our reconstruction algorithm is based on the following two auxiliary steps: (i) recovery of the potential on a boundary edge, (ii) transition through a vertex, which allows us to pass to internal edges. For step (i), using contour integration, we derive a new reconstruction formula to find distribution potentials on the tree edges and apply it to obtain both uniform and local stability estimates. At step (ii), in contrast to previous studies, we choose a countable set of points in the spectral plane and use sampling to obtain a reconstruction procedure that is valid under any sufficiently small perturbation of the spectral data in a certain space. As a result, we obtain the uniform stability, as well as local stability of the inverse Sturm-Liouville problem on an arbitrary metric tree.

The paper is organized as follows. In Section~\ref{sec:prelim}, we state eigenvalue problems on a metric tree and introduce the notation. In Section~\ref{sec:char}, we describe the construction and some properties of characteristic functions for the Sturm-Liouville problems on the tree. Section~\ref{sec:main} contains the formulations of the main results and briefly outlines their proofs. In Section~\ref{sec:bound}, the uniform stability of recovering the potential on a boundary edge is proved. In Section~\ref{sec:return}, we investigate an auxiliary problem that is related to the transition through a vertex. In Section~\ref{sec:alg}, we develop a stable algorithm for the unique reconstruction of the potentials on the whole tree and prove the corresponding local theorem. Appendix contains the proof of the Lipschitz continuity of the transformation operator kernel with respect to the distributional potential. That auxiliary result has independent significance.  

\section{Preliminaries} \label{sec:prelim}

Denote by $G$ the metric tree that consists of vertices $\{ v_j \}_1^{m+1}$ and edges $\{ e_j \}_1^m$ ($m \ge 1$). Assume that each edge $e_j$ ($j = \overline{1,m}$) goes from the vertex $v_j$ to its parent $v_{p(j)}$, $j < p(j) \le m+1$. The vertex $v_{m+1}$ is the root, it has no outgoing edges. Furthermore, assume that $v_{m+1}$ has the only incoming edge $e_m = [v_m, v_{m+1}]$ (see the example in Fig.~\ref{fig:tree}).
For each vertex $v$, 
denote by $E_v$ the set of all incoming and outgoing edges incident to $v$. The size of $E_v$ is called the degree of $v$. We call $v$ a boundary vertex if it has degree $1$ and an internal vertex otherwise.
Denote the sets of the boundary vertices and of the internal vertices of the tree $G$ by $\partial G$ and $\mbox{int} \, G$, respectively. Since the root $v_{m+1}$ belongs to $\partial G$, we introduce the set $\partial G' := \partial G \setminus v_{m+1}$.

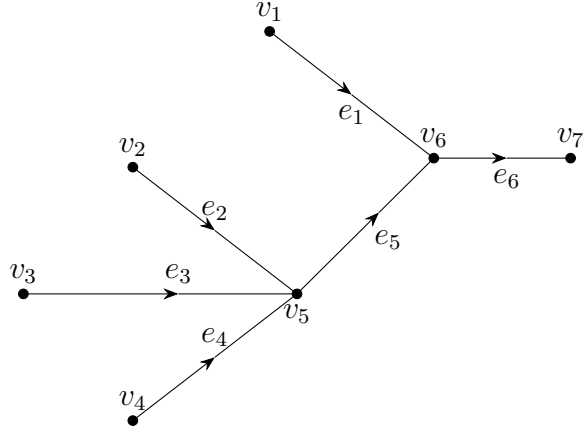
\begin{figure}[h!]
\centering
\begin{tikzpicture}[scale = 1.2]
\filldraw (0, 1) circle (1.5pt) node[anchor=south]{$v_3$};
\filldraw (1.2, -0.4) circle (1.5pt) node[anchor=south]{$v_4$};
\filldraw (1.2, 2.4) circle (1.5pt) node[anchor=south]{$v_2$};
\filldraw (3, 1) circle (1.5pt) node[anchor=north]{$v_5$};
\draw[-{Stealth[length=2mm]}] (0, 1) -- (1.7,1) node[anchor=south]{$e_3$};
\draw (1.7, 1) -- (3, 1);
\draw[-{Stealth[length=2mm]}] (1.2, -0.4) -- (2.1, 0.3) node[anchor=south]{$e_4$};
\draw (2.1, 0.3) -- (3, 1);
\draw[-{Stealth[length=2mm]}] (1.2, 2.4) -- (2.1, 1.7) node[anchor=south]{$e_2$};
\draw (2.1, 1.7) -- (3, 1);
\filldraw (4.5, 2.5) circle (1.5pt) node[anchor=south]{$v_6$};
\draw[-{Stealth[length=2mm]}] (3, 1) -- (3.9, 1.9);
\draw (4, 1.6) node{$e_5$};
\draw (3.9, 1.9) -- (4.5, 2.5);
\filldraw (2.7, 3.9) circle (1.5pt) node[anchor=south]{$v_1$};
\draw[-{Stealth[length=2mm]}] (2.7, 3.9) -- (3.6, 3.2) node[anchor=north]{$e_1$};
\draw (3.6, 3.2) -- (4.5,2.5); 
\filldraw (6, 2.5) circle (1.5pt) node[anchor=south]{$v_7$};
\draw[-{Stealth[length=2mm]}] (4.5, 2.5) -- (5.3, 2.5) node[anchor=north]{$e_6$};
\filldraw (5.3, 2.5) -- (6, 2.5);
\end{tikzpicture}
\caption{Example of tree $G$}
\label{fig:tree}
\end{figure}

Let each edge $e_j$ have length $T_j > 0$ and be parameterized by $x_j \in [0,T_j]$ so that $x_j = 0$ corresponds to the vertex $v_j$ and $x_j = T_j$, to $v_{p(j)}$.
A function on the tree $G$ is a vector $y = \{ y_j \}_1^m$ of complex-valued functions $y_j = y_j(x_j)$, $x_j \in (0,T_j)$, $j = \overline{1,m}$. 

Consider the Sturm-Liouville equations on the tree $G$:
\begin{equation} \label{eqv}
-(y_j^{[1]})' - \sigma_j(x_j) y_j^{[1]} - \sigma_j^2(x_j) y_j = \lambda y_j, \quad x_j \in (0,T_j), \quad j = \overline{1,m},
\end{equation}
where $\sigma = \{ \sigma_j \}_1^m \in L_2(G)$, $y_j^{[1]} := y_j' - \sigma_j y_j$ are the quasi-derivatives, and $\lambda$ is the spectral parameter. We call $\sigma$ the potential on the graph $G$ and $\sigma_j$ the potentials on the corresponding edges $e_j$ ($j = \overline{1,m}$).
Equations \eqref{eqv} are considered for functions $y_j$ such that $y_j, y_j^{[1]} \in AC[0,T_j]$, $j = \overline{1,m}$. It is worth mentioning that equations \eqref{eqv} are equivalent to the Schr\"odinger equations
\begin{equation} \label{schr}
-y_j''(x_j) + q_j(x_j) y_j(x_j) = \la y_j(x_j), \quad x_j \in (0,T_j), 
\end{equation}
with distributional potentials $q_j \in W_2^{-1}(0,T_j)$ (see \cite{SS03}).

For a function $y_j$ on the edge $e_j = [v_j, v_{p(j)}]$, introduce the notations
$$
\begin{cases}
y_j(v_j) = y_j(0), \quad & y_j(v_{p(j)}) = y_j(T_j), \\
y_j^{[1]}(v_j) = -y_j^{[1]}(0), \quad & y_j^{[1]}(v_{p(j)}) = y_j^{[1]}(T_j).
\end{cases}
$$
Thus, $y_j(v)$ is the value of a function on the tree in the vertex $v$ and $y_j^{[1]}(v)$ is the ``normal quasi-derivative'', which is taken in the outgoing direction from the interval $(0, T_j)$. For boundary vertices $v \in \partial G$, we omit the index $j$ and write $y(v)$, $y^{[1]}(v)$.

This paper focuses on the boundary value problems $\mathcal L$ and $\mathcal L_k$ ($v_k \in \partial G'$) for the Sturm-Liouville system \eqref{eqv} with the matching conditions
\begin{equation} \label{mc}
\begin{cases}
y_j(v) = y_k(v), \quad e_j, e_k \in E_v & \text{(continuity condition)}, \\
\sum\limits_{e_j \in E_v} y_j^{[1]}(v) = 0 & \text{(Kirchhoff's condition)}
\end{cases}
\end{equation}
at all the internal vertices $v \in \mbox{int}\,G$ and the corresponding boundary conditions:
\begin{align} \label{bc}
\mathcal L \colon & \quad y(v_j) = 0, \quad v_j \in \partial G, \\ \label{bck}
\mathcal L_k \colon & \quad y^{[1]}(v_k) = 0, \quad y(v_j) = 0, \quad v_j \in \partial G \setminus v_k.
\end{align}

Throughout the paper, we use the following notation:

\begin{itemize}
\item $b = |\partial G|$ is the number of the boundary vertices in the tree $G$.
\item $L_2(G) := \bigl\{ y = \{ y_j \}_1^m \colon y_j \in L_2(0,T_j), \, j = \overline{1,m} \bigr\}$ is the Hilbert space with the scalar product
$$
(y,z) = \sum_{j = 1}^m \int_0^{T_j} \overline{y_j(x_j)} z_j(x_j) \, dx_j
$$
and the norm
$$
\| y \|_{L_2(G)} = \sqrt{\sum_{j = 1}^m \| y_j \|^2_{L_2(0,T_j)}}.
$$
\item For $\Omega > 0$, introduce the ball
$$
\mathcal B_{\Omega} := \bigl\{ y = \{ y_j \}_1^m \in L_2(G) \colon \| y \|_{L_2(G)} \le \Omega \bigr\}.
$$
\item Along with $\sigma = \{ \sigma_j \}_1^m$, we consider other potentials in $L_2(G)$: $\tilde \sigma = \{ \tilde \sigma_j \}_1^m$, $\sigma^0 = \{ \sigma_j^0 \}_1^m$, $\sigma_j^0 \equiv 0$ ($j = \overline{1,m}$), and $\sigma^s = \{ \sigma_j^s \}_{j = 1}^m$ ($s \ge 1$). We agree that, if a symbol $\al$ denotes an object related to the potential $\sigma$, then the notations $\tilde \al$, $\al^0$, and $\al^s$ mean the analogous objects related to the potentials $\tilde \sigma$, $\sigma^0$, and $\sigma^s$, respectively. Also, $\hat \al := \al - \tilde \al$.
\item In estimates, the same symbol $C$ denotes various positive constants independent of $\la$, $x$, etc. In proofs, we assume that constants $C$ depend on the same parameters as in the formulations of the corresponding theorems and lemmas.
\item $\la = \rho^2$, $\mu = \theta^2$.
\item $\mathbf{T} := \mbox{length}(G) = \sum_{j = 1}^m T_j$ is the total length of the graph $G$.
\item For $T > 0$, we denote by $PW(T)$ the class of the Paley-Wiener functions of the form
\begin{equation} \label{PW}
\mathcal F(\rho) = \int_{-T}^T f(t) \exp(i \rho t) \, dt, \quad f \in L_2(-T, T).
\end{equation}
\item The notation $f(\rho) \asymp g(\rho)$ means that
$$
\forall \rho \quad c |f(\rho)| \le |g(\rho)| \le C |f(\rho)|,
$$
where $c$ and $C$ are positive constants.
\end{itemize}

\section{Characteristic functions} \label{sec:char}

In this section, we provide a recursive definition of characteristic functions, whose zeros coincide with the eigenvalues of the corresponding Sturm-Liouville operators on trees. 
Moreover, 
we present the asymptotics of the characteristic functions, and study their continuous dependence on the potential $\sigma$ in suitable norms.

Let $G$ be a metric tree defined in Section~\ref{sec:main}, and let $\sigma = \{ \sigma_j \}_1^m$ be a function on the tree $G$ of the class $L_2(G)$. 
For $j \in \{ 1, 2, \dots, m \}$, denote by $\vv_j(x_j, \la)$ and $S_j(x_j, \la)$ the solutions of equation \eqref{eqv} for the fixed $j$ satisfying the initial conditions:
\begin{equation} \label{ic}
\vv_j(0,\la) = S_j^{[1]}(0,\la) = 1, \quad \vv_j^{[1]}(0,\la) = S_j(0,\la) = 0. 
\end{equation}
Note that equation \eqref{eqv} is equivalent to the first-order system
$$
\frac{d}{dx_j}
\begin{bmatrix}
y_j \\ y_j^{[1]}
\end{bmatrix} 
= \begin{bmatrix}
\sigma_j & 1 \\
-(\sigma_j^2 + \la) & -\sigma_j
\end{bmatrix}
\begin{bmatrix}
y_j \\ y_j^{[1]}
\end{bmatrix}. 
$$
Therefore, the solutions $\vv_j(x_j, \la)$ and $S_j(x_j, \la)$ exist and are unique. Moreover, for each fixed $x_j \in [0,T_j]$, the functions $\vv_j(x_j, \la)$, $\vv^{[1]}_j(x_j, \la)$, $S_j(x_j, \la)$, and $S_j^{[1]}(x_j, \la)$ are entire analytic in $\la$.  

Suppose that the set of the boundary vertices is divided into two disjoint subsets: 
$$
\partial G = \partial G^D \cup \partial G^N, \quad \partial G^D \cap \partial G^N = \varnothing.
$$

Consider the following boundary conditions BC, which are more general than \eqref{bc} and \eqref{bck}:
\begin{equation*}
y(v) = 0, \: v \in \partial G^D, \qquad y^{[1]}(v) = 0, \: v \in \partial G^N.
\end{equation*}
Here and below in similar notations, the upper indices $D$ and $N$ mean ``Dirichlet'' and ``Neumann'', respectively. The quasi-derivatives on the respective edges are defined by using $\sigma$.

\begin{defin}[\hspace*{-3pt}\cite{Bond19}] \label{def:char}
The characteristic function $\Delta(\la) = \Delta(\la; G, \sigma, BC)$ of the Sturm-Liouville problem \eqref{eqv}--\eqref{mc} on the tree $G$ with the boundary conditions BC is defined recursively:
\begin{enumerate}
\item If $m = 1$, then $\Delta(\la)$ is given by the following formulas for various types of boundary conditions:
\begin{equation*}
\def\arraystretch{1.7}
\begin{array}{ll}
y_1(0) = y_1(T_1) = 0 \colon \quad & \Delta(\la) = S_1(T_1, \la), \\
y_1(0) = y_1^{[1]}(T_1) = 0 \colon \quad & \Delta(\la) = S_1^{[1]}(T_1,\la), \\
y_1^{[1]}(0) = y_1(T_1) = 0 \colon \quad & \Delta(\la) = \vv_1(T_1, \la), \\
y_1^{[1]}(0) = y_1^{[1]}(T_1) = 0 \colon \quad & \Delta(\la) = \vv_1^{[1]}(T_1,\la).
\end{array}
\end{equation*}
\item Let $m > 1$, and let $u$ be an internal vertex of degree $r$. Split the tree $G$ by the vertex $u$ into $r$ subtrees $G_j$ ($j = \overline{1,r}$) as in Fig.~\ref{fig:split}. For $j \in \{1, 2, \dots, r\}$, let $\Delta_j^D(\la)$ and $\Delta_j^N(\la)$ be the characteristic functions for the Sturm-Liouville equations \eqref{eqv} on the subtree $G_j$ with the boundary conditions $y(u) = 0$ and $y^{[1]}(u) = 0$, respectively, the conditions BC at the vertices $v \in \partial G_j \setminus u$ and the matching conditions \eqref{mc} at $v \in \mbox{int}\, G_j$. Then
\begin{equation} \label{split}
\Delta(\la) := \sum_{j = 1}^r \Delta_j^N(\la) \prod_{\substack{k = 1 \\ k \ne j}}^r \Delta_k^D(\la).
\end{equation}
This definition does not depend on the choice of $u$.
\end{enumerate}
\end{defin}

\begin{figure}[h!]
\centering
\begin{tikzpicture}
\filldraw (0, 0) circle (2pt) node[anchor=south]{$u$};
\draw (-1, -1) node{$G$};
\filldraw (1, 1) circle (1pt);
\draw (0, 0) edge (1, 1);
\filldraw (1.5, -0.3) circle (1pt);
\draw (0, 0) edge (1.5, -0.3);
\filldraw (2.5, -0.8) circle (1pt);
\draw (1.5, -0.3) edge (2.5, -0.8);
\filldraw (3, 0) circle (1pt);
\draw (1.5, -0.3) edge (3, 0);
\filldraw (3.3, -0.5) circle (1pt);
\draw (2.5, -0.8) edge (3.3, -0.5);
\filldraw (3.6, -0.8) circle (1pt);
\draw (2.5, -0.8) edge (3.6, -0.8);
\filldraw (3.3, -1.1) circle (1pt);
\draw (2.5, -0.8) edge (3.3, -1.1);
\filldraw (0.5, -1) circle (1pt);
\draw (0, 0) edge (0.5, -1);
\filldraw (1.3, -2) circle (1pt);
\draw (0.5, -1) edge (1.3, -2);
\filldraw (0.7, -2) circle (1pt);
\draw (0.5, -1) edge (0.7, -2);
\filldraw (-1.5, 0.5) circle (1pt);
\draw (0, 0) edge (-1.5, 0.5);
\filldraw (-2.5, 1) circle (1pt);
\draw (-1.5, 0.5) edge (-2.5, 1);
\filldraw (-2.8, 0.5) circle (1pt);
\draw (-1.5, 0.5) edge (-2.8, 0.5);
\filldraw (-2.5, 0) circle (1pt);
\draw (-1.5, 0.5) edge (-2.5, 0);
\filldraw (8, 0.3) circle (2pt) node[anchor=south]{$u$};
\filldraw (9, 1.3) circle (1pt);
\draw (8, 0.3) edge (9, 1.3);
\draw (8.6, 1.1) node{$G_1$};
\filldraw (8, 0) circle (2pt) node[anchor=west]{$u$};
\filldraw (9.5, -0.3) circle (1pt);
\draw (8, 0) edge (9.5, -0.3);
\draw (9.4, 0) node{$G_2$}; 
\filldraw (10.5, -0.8) circle (1pt);
\draw (9.5, -0.3) edge (10.5, -0.8);
\filldraw (11, 0) circle (1pt);
\draw (9.5, -0.3) edge (11, 0);
\filldraw (11.3, -0.5) circle (1pt);
\draw (10.5, -0.8) edge (11.3, -0.5);
\filldraw (11.6, -0.8) circle (1pt);
\draw (10.5, -0.8) edge (11.6, -0.8);
\filldraw (11.3, -1.1) circle (1pt);
\draw (10.5, -0.8) edge (11.3, -1.1);
\filldraw (8, -0.3) circle (2pt) node[anchor=north]{$u$};
\filldraw (8.5, -1.3) circle (1pt);
\draw (8, -0.3) edge (8.5, -1.3);
\filldraw (9.3, -2.3) circle (1pt);
\draw (8.5, -1.3) edge (9.3, -2.3);
\filldraw (8.7, -2.3) circle (1pt);
\draw (8.5, -1.3) edge (8.7, -2.3);
\draw (8.2, -1.3) node{$G_3$};
\filldraw (7.7, 0) circle (2pt) node[anchor=south]{$u$};
\filldraw (6.2, 0.5) circle (1pt);
\draw (7.7, 0) edge (6.2, 0.5);
\filldraw (5.2, 1) circle (1pt);
\draw (6.2, 0.5) edge (5.2, 1);
\filldraw (4.9, 0.5) circle (1pt);
\draw (6.2, 0.5) edge (4.9, 0.5);
\filldraw (5.2, 0) circle (1pt);
\draw (6.2, 0.5) edge (5.2, 0);
\draw (5.2, -0.5) node{$G_4$};
\end{tikzpicture}
\caption{Splitting of the tree $G$ by the vertex $u$}
\label{fig:split}
\end{figure}
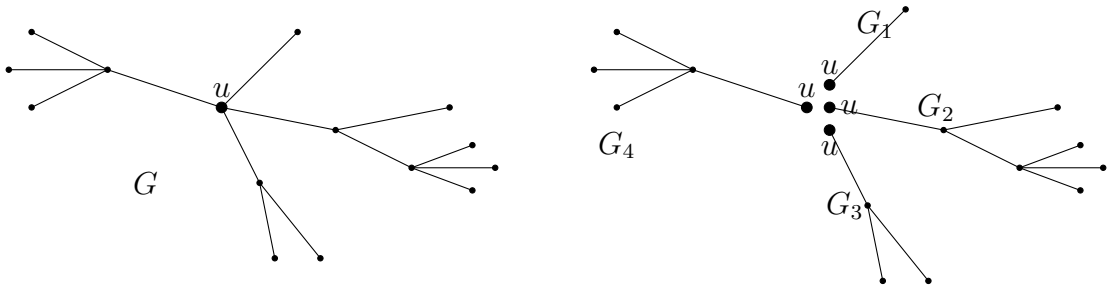

\begin{prop} \label{prop:char}
The spectrum of the Sturm-Liouville problem \eqref{eqv}--\eqref{mc} on the tree $G$ with the boundary conditions BC is a countable set of eigenvalues, which coincide (counting with multiplicities) with the zeros of the characteristic function $\Delta(\la)$ constructed in Definition~\ref{def:char}. 
\end{prop}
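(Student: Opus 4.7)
The plan is to argue by induction on the number $m$ of edges of $G$. Since $\Delta(\la)$ is built as a polynomial in the entire functions $\vv_j$, $S_j$ and their quasi-derivatives, it is itself entire in $\la$; once we show that its zeros coincide with the eigenvalues with multiplicities, the countability of the spectrum follows automatically from the identity principle for entire functions.

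For the base case $m=1$, the tree $G$ is a single interval and each of the four cases of Definition~\ref{def:char}(1) reduces to a classical two-point Sturm--Liouville problem with $W_2^{-1}$-potential. In each case, the stated $\Delta(\la)$ is the classical characteristic function, whose zeros match the eigenvalues with multiplicities by the theory developed in~\cite{SS03} and the choice of initial data~\eqref{ic}.

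For the inductive step with $m\ge 2$, I would pick an internal vertex $u$ of degree $r\ge 2$ and split $G$ at $u$ into subtrees $G_1,\dots,G_r$ as in Definition~\ref{def:char}(2). Since each $G_j$ has strictly fewer than $m$ edges, the inductive hypothesis identifies (with multiplicities) the zeros of $\Delta_j^D(\la)$ and $\Delta_j^N(\la)$ with the eigenvalues of the respective Dirichlet and Neumann problems on $G_j$ at $u$ (with the conditions BC at $\partial G_j\setminus u$). For each $j$, introduce the nontrivial solution $\Psi_j(\cdot,\la)$ of~\eqref{eqv} on $G_j$ satisfying the matching conditions~\eqref{mc} at the internal vertices of $G_j$ and the conditions BC at $\partial G_j\setminus u$, but no condition at $u$; for $\la$ outside a discrete set this is unique up to a scalar multiple. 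A short induction using the recursive structure of Definition~\ref{def:char} yields the key identities
\begin{equation*}
\Psi_j(u,\la) = \Delta_j^D(\la), \qquad \Psi_j^{[1]}(u,\la) = \Delta_j^N(\la)
\end{equation*}
after a suitable normalization. A nonzero eigenfunction $y$ of the full problem then corresponds to scalars $c_1,\dots,c_r$, not all zero, such that $y|_{G_j}=c_j\Psi_j(\cdot,\la)$ glue consistently at $u$ via the continuity and Kirchhoff conditions~\eqref{mc}; that is,
\begin{equation*}
c_1\Delta_1^D(\la) = \cdots = c_r\Delta_r^D(\la), \qquad \sum_{j=1}^r c_j\Delta_j^N(\la) = 0.
\end{equation*}
Expanding the determinant of this homogeneous $r\times r$ linear system in $c_1,\dots,c_r$ yields, up to sign, exactly the right-hand side of~\eqref{split}, so $\la_0$ is an eigenvalue of the full problem if and only if $\Delta(\la_0)=0$.

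The main obstacle will be the multiplicity bookkeeping at points $\la_0$ where several of the $\Delta_j^D$ and/or $\Delta_j^N$ vanish simultaneously: there $\Psi_j$ may cease to be unique up to a scalar, the Weyl-type ratio $\Delta_j^N/\Delta_j^D$ may have a pole, and the identities above degenerate. I would resolve this by a direct rank computation for the gluing matrix, using the inductive hypothesis to equate the order of vanishing of each $\Delta_j^D$ or $\Delta_j^N$ at $\la_0$ with the dimension of the corresponding eigenspace on $G_j$, and then matching the nullity of the gluing map on $G$ with the order of vanishing of $\Delta(\la)$ read off from~\eqref{split}. The independence of the splitting vertex $u$ asserted in Definition~\ref{def:char} is justified a posteriori by the argument, since both the eigenvalue set and its multiplicities are intrinsic to $G$ and the BC.
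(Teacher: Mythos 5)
Your plan is a genuinely self-contained route, whereas the paper does not prove Proposition~\ref{prop:char} directly: it observes that for regular potentials the recursive construction and the statement follow inductively from \cite[Theorem 2.1]{LP09}, and asserts that the case of equations \eqref{eqv} is analogous. The ``generic'' part of your argument is sound and is indeed the mechanism underlying Definition~\ref{def:char}: one can build, recursively and without dividing by anything, an entire-in-$\la$ family $\Psi_j$ on $G_j$ satisfying all conditions except at $u$ with $\Psi_j(u,\la)=\Delta_j^D(\la)$, $\Psi_j^{[1]}(u,\la)=\Delta_j^N(\la)$, and the determinant of your $r\times r$ gluing system is $\pm$ the right-hand side of \eqref{split}, which identifies the eigenvalue \emph{set} with the zero set away from the degenerate points.

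The genuine gap is exactly the multiplicity bookkeeping, and the resolution you sketch would not close it. First, the potentials $\sigma_j$ are complex-valued, so the problems are in general non-self-adjoint and ``multiplicity'' must mean algebraic multiplicity; the order of vanishing of $\Delta_j^D$ or $\Delta_j^N$ at $\la_0$ is \emph{not} the dimension of an eigenspace (already for $m=1$ a characteristic function such as $S_1(T_1,\la)$ can have a double zero with a one-dimensional kernel and an associated function). Hence your proposed inductive identification ``order of vanishing $=$ eigenspace dimension'' is false as stated, and a rank/nullity computation of the gluing matrix at a fixed $\la_0$ can only see geometric multiplicity, never the order of the zero of $\Delta$. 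Second, at precisely those degenerate $\la_0$ the space of solutions on $G_j$ subject to all conditions except at $u$ can have dimension larger than one (and your normalized $\Psi_j(\cdot,\la_0)$ may vanish identically), so eigenfunctions of the full problem are no longer of the form $c_j\Psi_j$ and the $r\times r$ system does not parameterize them; one needs either a larger gluing system whose \emph{analytic} structure near $\la_0$ (local factorization, or a resolvent/Rouch\'e-type argument as in \cite{LP09}) is compared with $\Delta$, not a pointwise rank count. Finally, your closing claim that independence of the splitting vertex $u$ is justified a posteriori does not follow: coincidence of the zero divisors only shows that two candidate characteristic functions agree up to a zero-free entire factor, not that they are equal, so well-definedness of Definition~\ref{def:char} needs a separate argument.
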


In the case of the Sturm-Liouville equations with regular potentials, our construction of the characteristic function inductively follows from \cite[Theorem 2.1]{LP09}, which implies Proposition~\ref{prop:char}. For equations \eqref{eqv}, the construction and the proof are analogous. In a number of previous studies (see, e.g., \cite{Yur05, Yur10, Vas19}), characteristic functions were constructed as determinants of some linear systems. That definition is equivalent to ours up to a constant multiplier. Anyway, we find the recursive definition of characteristic functions more convenient for investigating their properties.

Proceed to the asymptotics of characteristic functions.

\begin{prop}[\hspace*{-3pt}\cite{HM04}; \cite{Bond21}, Theorem~2.1] \label{prop:SC}
For $j = \overline{1,m}$, the following relations hold:
\begin{align} \label{intS}
& S_j(x,\rho^2) = \frac{\sin \rho x}{\rho} + \frac{\psi_j(x,\rho)}{\rho}, \quad S_j^{[1]}(x,\rho^2) = \cos \rho x + \xi_j(x, \rho), \\ \label{intvv}
& \vv_j(x,\rho^2) = \cos \rho x + \zeta_j(x, \rho), \quad \vv_j^{[1]}(x,\rho^2) = -\rho \sin \rho x + \rho \eta_j(x,\rho) + c_j(x),
\end{align}
where the functions $\psi_j(x,\rho)$, $\xi_j(x,\rho)$, $\zeta_j(x,\rho)$, and $\eta_j(x,\rho)$ are continuous by $x \in [0, T_j]$ and, for each fixed $x \in (0,T_j]$,
belong to $PW(x)$, and $c_j(x) = \vv_j^{[1]}(x,0)$.
\end{prop}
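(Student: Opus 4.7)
The plan is to reduce the claim to the existence of transformation operators with $L_2$ kernels for equation \eqref{eqv}, which is the heart of the constructions in \cite{HM04, Bond21}. Once the solutions are represented as free solutions plus integral operators acting on exponentials with $L_2$ kernels, the Paley-Wiener characterization yields the four asymptotic formulas simultaneously.

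First, I fix $j$ and rewrite equation \eqref{eqv} as the first-order system with coefficient matrix
$$
A_j(x,\la) = \begin{bmatrix} \sigma_j(x) & 1 \\ -\sigma_j^2(x) - \la & -\sigma_j(x) \end{bmatrix},
$$
so that the candidate solutions $Y_j^\vv = (\vv_j, \vv_j^{[1]})^\top$ and $Y_j^S = (S_j, S_j^{[1]})^\top$ satisfy $Y_j' = A_j Y_j$ with the initial conditions \eqref{ic}. For $\sigma_j \equiv 0$ the explicit solutions are $(\cos\rho x, -\rho\sin\rho x)$ and $(\rho^{-1}\sin\rho x, \cos\rho x)$, matching the leading terms in \eqref{intS}--\eqref{intvv}. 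I then introduce the ansatz
$$
Y_j^\vv(x,\rho^2) = Y_j^{\vv,0}(x,\rho) + \int_0^x \mathcal{K}_j(x,t)\, Y_j^{\vv,0}(t,\rho)\,dt,
$$
and similarly for $Y_j^S$, where $\mathcal{K}_j$ is a $2\times 2$ matrix-valued kernel. Substituting into the first-order system and separating the regular and distributional terms produces a Goursat-type problem for $\mathcal{K}_j$ that can be recast as a Volterra integral equation. Successive approximations, combined with the $L_2$-bound on $\sigma_j$, show this equation has a unique solution with $\mathcal{K}_j(x,\cdot) \in L_2(0,x)$ for each $x \in (0,T_j]$, depending continuously on $x$ in the $L_2$-norm; this is exactly the content of the transformation-operator theorems in \cite{HM04, Bond21}.

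The representations \eqref{intS}--\eqref{intvv} then follow by splitting $\cos\rho t$ and $\sin\rho t$ into $\tfrac12(e^{i\rho t}+e^{-i\rho t})$ and $\tfrac{1}{2i}(e^{i\rho t}-e^{-i\rho t})$ and collecting terms. For example, $\vv_j(x,\rho^2) - \cos\rho x$ becomes an integral of an $L_2$ function on $[0,x]$ against $e^{\pm i\rho t}$, which after the change of variable $t \mapsto -t$ can be written as \eqref{PW} with some $f \in L_2(-x,x)$, so $\zeta_j(x,\cdot) \in PW(x)$. The same manipulation applied to $\vv_j^{[1]}$ gives $\eta_j \in PW(x)$; the constant term $c_j(x) = \vv_j^{[1]}(x,0)$ appears because the evaluation of the kernel against $\rho\sin\rho t$ produces an extra $\rho$-independent boundary contribution after integration by parts. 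The factors of $\rho^{-1}$ in \eqref{intS} originate from the normalization $S_j(0,\la) = 0$, $S_j^{[1]}(0,\la) = 1$.

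The main technical obstacle is establishing the $L_2$-regularity and continuity in $x$ of the kernel $\mathcal{K}_j$ when $\sigma_j$ is only of class $L_2$. The standard $L_1$-theory of transformation operators cannot be invoked directly: the equation must be treated in its quasi-derivative form, and the successive-approximation series has to be estimated using Cauchy-Schwarz on products of $L_2$-functions rather than pointwise bounds. This is precisely what is carried out in \cite{HM04} and, in a form more convenient for our needs, in \cite[Theorem~2.1]{Bond21}, so at this stage I would simply cite those results and verify that the initial conditions \eqref{ic} match the formulas \eqref{intS}--\eqref{intvv}.
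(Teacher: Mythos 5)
Your proposal is correct and follows essentially the same route as the paper: Proposition~\ref{prop:SC} is taken there as a known consequence of the transformation operators of \cite{HM04, Bond21} (whose kernels $\mathscr K$, $\mathscr N$, $\mathscr C$ are recalled in the Appendix as in \eqref{transK}--\eqref{transN}), with the $PW(x)$ property read off from the $L_2$-kernel integrals against $\cos\rho t$, $\sin\rho t$ and $c_j(x)=\vv_j^{[1]}(x,0)$ coming from the $\rho$-independent term $\mathscr C(x)$. Your packaging via a single matrix kernel is only a cosmetic difference from the two scalar kernels plus constant used in \cite{Bond21}, and citing those results for the $L_2$-regularity is exactly what the paper does.
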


\begin{lem} \label{lem:Delta0}
The characteristic function $\Delta^0(\la) := \Delta(\la; G, 0, BC)$ for the case of zero potentials $\sigma_j^0 \equiv 0$ $(j = \overline{1,m})$ has the form
\begin{equation} \label{D0}
\Delta^0(\rho^2) = \rho^{1-d} P_{m.\mathbf{T}}(\rho),
\end{equation}
where $d = |\partial G^D|$ is the number of the Dirichlet boundary conditions among BC, and $P_{m,\mathbf{T}}(\rho)$ is a trigonometric polynomial composed of the functions $\sin \rho T_j$ and $\cos \rho T_j$ $(j = \overline{1,m})$ with the following properties:

(i) All the zeros of $P_{m,\mathbf{T}}(\rho)$ are real. 

(ii) $P_{m,\mathbf{T}}(\rho) \asymp \exp(|\mbox{Im} \rho| \mathbf{T})$ in the sectors $\epsilon \le \pm \arg \rho \le \pi - \epsilon$ for any fixed $\epsilon > 0$.
\end{lem}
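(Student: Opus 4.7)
My plan is to prove the representation of $\Delta^0$ together with properties (i) and (ii) simultaneously by strong induction on the number of edges $m$.

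For the base case $m=1$, I would substitute $\sigma_1\equiv 0$ into \eqref{ic} to obtain the explicit solutions $\vv_1(x,\rho^2)=\cos\rho x$, $\vv_1^{[1]}(x,\rho^2)=-\rho\sin\rho x$, $S_1(x,\rho^2)=\rho^{-1}\sin\rho x$, $S_1^{[1]}(x,\rho^2)=\cos\rho x$, and read off Definition~\ref{def:char}.(1) in the four cases: $DD$ gives $\Delta^0=\rho^{-1}\sin\rho T_1$ with $d=2$, $DN$ and $ND$ give $\Delta^0=\cos\rho T_1$ with $d=1$, and $NN$ gives $\Delta^0=-\rho\sin\rho T_1$ with $d=0$. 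Each is of the form $\rho^{1-d}P_{1,T_1}(\rho)$ with $P_{1,T_1}\in\{\pm\sin\rho T_1,\cos\rho T_1\}$, and both (i) (real zeros) and (ii) (the standard bound $|\sin\rho T_1|,|\cos\rho T_1|\asymp e^{|\mbox{Im}\,\rho|T_1}$ in the sectors $\epsilon\le\pm\arg\rho\le\pi-\epsilon$) are immediate.

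For the inductive step, I would pick any internal vertex $u$ of $G$ (which exists as soon as $m>1$), split $G$ at $u$ into subtrees $G_1,\dots,G_r$ with $m_j<m$ edges and lengths $\mathbf{T}_j$ summing to $\mathbf{T}$, and apply the inductive hypothesis to each $G_j$ with the original boundary conditions on $\partial G_j\setminus\{u\}$ together with either Dirichlet or Neumann at $u$. Letting $d_j$ denote the number of Dirichlet conditions of the original problem inherited by $\partial G_j\setminus\{u\}$, the hypothesis yields $\Delta_j^D(\rho^2)=\rho^{-d_j}P_j^D(\rho)$ and $\Delta_j^N(\rho^2)=\rho^{1-d_j}P_j^N(\rho)$, since the Dirichlet count of $G_j$ in its $D$-problem is $d_j+1$. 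Because $u\notin\partial G$ we have $\sum_j d_j=d$, and substituting into the splitting formula \eqref{split} a uniform factor $\rho^{1-d}$ comes out of every summand, leaving
$$
P_{m,\mathbf{T}}(\rho)=\sum_{j=1}^r P_j^N(\rho)\prod_{k\ne j}P_k^D(\rho),
$$
a trigonometric polynomial in $\sin\rho T_j,\cos\rho T_j$ as asserted. Property (i) I would then deduce from Proposition~\ref{prop:char} together with the standard observation that for $\sigma\equiv 0$ the Sturm--Liouville operator reduces to $-d^2/dx^2$ on the quantum tree with Kirchhoff matching and Dirichlet/Neumann at boundary vertices; this operator is self-adjoint and non-negative (its quadratic form is $\int_G|y'|^2$), so all its eigenvalues are nonnegative, all zeros of $\Delta^0(\rho^2)$ are real in $\rho$, and removing the (real) zero at $\rho=0$ produced by the factor $\rho^{1-d}$ leaves only real zeros for $P_{m,\mathbf{T}}$.

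The main obstacle is property (ii). The upper bound is immediate from the triangle inequality and the inductive bounds $|P_j^{D,N}(\rho)|\le C\,e^{|\mbox{Im}\,\rho|\mathbf{T}_j}$. The lower bound is delicate, since the sum of products in the recursion could cancel at the extreme exponents $e^{\pm i\rho\mathbf{T}}$, and I would rule this out by strengthening the inductive hypothesis to track the leading Fourier coefficients $A_j^\pm,B_j^\pm$ of $P_j^N$ and $P_j^D$ at $e^{\pm i\rho\mathbf{T}_j}$. The base case gives $A_j^\pm/B_j^\pm=\pm i$, and substitution into the recursion shows that the coefficient of $e^{\pm i\rho\mathbf{T}}$ in $P_{m,\mathbf{T}}$ equals $(\pm i)\,r\prod_k B_k^\pm$, which is nonzero and, as one checks, propagates the ratio $\pm i$ under further splitting. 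The lower bound $|P_{m,\mathbf{T}}(\rho)|\ge c\,e^{|\mbox{Im}\,\rho|\mathbf{T}}$ in the prescribed sectors then follows from the dominance of these extreme exponentials over all others for $|\rho|$ large, with any bounded remaining region handled by compactness and the absence of non-real zeros established in (i).
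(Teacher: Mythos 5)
Your proposal is correct and follows essentially the same route as the paper: explicit zero-potential solutions combined with induction over the recursive Definition~\ref{def:char} to get the form \eqref{D0}, self-adjointness and non-negativity of the Laplacian (via Proposition~\ref{prop:char}) for (i), and inductive asymptotics in the sectors for (ii). The paper dispatches (ii) in a single sentence, and your bookkeeping of the extreme exponential coefficients together with the Neumann-to-Dirichlet ratio $\mp i$ (which does propagate through the splitting formula \eqref{split}, yielding a nonzero coefficient $\mp i\,r\prod_k B_k^\pm$) is a sound way of making that inductive step explicit.
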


\begin{proof}
For $\sigma_j^0 \equiv 0$, equation \eqref{eqv} turns into $-y_j'' = \lambda y_j$ and the quasi-derivative $y_j^{[1]}$ coincides with the standard derivative $y_j'$. Therefore, we have $S_j^0(x_j,\rho^2) = \dfrac{\sin \rho x_j}{\rho}$ and $\vv_j^0(x_j, \rho^2) = \cos \rho x_j$. Using Definition~\ref{def:char}, we get the representation \eqref{D0} by induction. According to Proposition~\ref{prop:char}, the zeros of $\Delta^0(\la)$ coincide with the eigenvalues of the Laplacian $-\dfrac{d^2}{dx^2}$ on the tree $G$. It can be easily shown that this Laplacian is self-adjoint and non-negative definite, which implies the property (i) of $P_{m,\mathbf{T}}$. The property (ii) is proved by obtaining the asymptotics of the characteristic function inductively in the corresponding sectors. 
\end{proof}

Using Definition~\ref{def:char}, Proposition~\ref{prop:SC}, and Lemma~\ref{lem:Delta0}, we get the following corollary.

\begin{cor} \label{cor:asymptD}
For $d = |\partial G^D| > 0$,
the characteristic function $\Delta(\la) = \Delta(\la; G, \sigma, BC)$ for $\sigma \in L_2(G)$ admits the representation
\begin{equation} \label{asymptD}
\Delta(\rho^2) = \Delta^0(\rho^2) + \rho^{1-d} \kappa(\rho), \quad \kappa \in PW(\mathbf{T}),
\end{equation}
where $\Delta^0(\la) = \Delta(\la; G, 0, BC)$. If $d = 0$, then the constant $\Delta(0)$ has to be added to the right-hand side of \eqref{asymptD}.
\end{cor}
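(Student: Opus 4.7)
The plan is induction on the number of edges $m$, following the recursive construction of $\Delta(\la)$ in Definition~\ref{def:char}.

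\textbf{Base case $m = 1$.} Here $\Delta(\la)$ is one of the four expressions in Definition~\ref{def:char}(1), and the claim follows by direct substitution of the asymptotic formulas from Proposition~\ref{prop:SC}. For instance, in the Dirichlet--Dirichlet case ($d = 2$),
$$
S_1(T_1,\rho^2) = \frac{\sin\rho T_1}{\rho} + \frac{\psi_1(T_1,\rho)}{\rho},
$$
so $\kappa(\rho) = \psi_1(T_1,\rho) \in PW(T_1) = PW(\mathbf{T})$. In the Neumann--Neumann case ($d = 0$) one uses $\vv_1^{[1]}(T_1,\rho^2) = -\rho\sin\rho T_1 + \rho\eta_1(T_1,\rho) + c_1(T_1)$, and $c_1(T_1) = \vv_1^{[1]}(T_1,0) = \Delta(0)$ is precisely the constant to be added; the mixed cases ($d = 1$) are analogous via $S_1^{[1]}$ and $\vv_1$.

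\textbf{Inductive step.} For $m \ge 2$, pick an internal vertex $u$ of degree $r \ge 2$, split $G$ into subtrees $G_1,\dots,G_r$ as in Definition~\ref{def:char}(2), and set $\mathbf{T}_j := \mbox{length}(G_j)$ and $d_j := $ number of Dirichlet conditions in BC at $\partial G_j\setminus\{u\}$, so $\sum_j d_j = d$ and $\sum_j \mathbf{T}_j = \mathbf{T}$. The subtree problems $\Delta_j^D$ and $\Delta_j^N$ carry $d_j+1 \ge 1$ and $d_j$ Dirichlet conditions respectively, so the inductive hypothesis yields
$$
\Delta_j^D(\rho^2) = \Delta_j^{D,0}(\rho^2) + \rho^{-d_j}\kappa_j^D(\rho), \qquad \Delta_j^N(\rho^2) = \Delta_j^{N,0}(\rho^2) + \rho^{1-d_j}\kappa_j^N(\rho) + \gamma_j,
$$
with $\kappa_j^D,\kappa_j^N \in PW(\mathbf{T}_j)$ and $\gamma_j = \Delta_j^N(0)$ if $d_j = 0$, otherwise $\gamma_j = 0$. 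Substitute these into \eqref{split} and into the analogous formula for $\Delta^0$, and subtract; since Lemma~\ref{lem:Delta0} writes the unperturbed factors as $\Delta_k^{D,0}(\rho^2) = \rho^{-d_k} P_k^D(\rho)$ and $\Delta_j^{N,0}(\rho^2) = \rho^{1-d_j} P_j^N(\rho)$ for trigonometric polynomials $P_k^D, P_j^N$ of exponential types $\mathbf{T}_k, \mathbf{T}_j$, the expansion of $\Delta - \Delta^0$ becomes a sum of products, each containing at least one remainder factor.

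\textbf{Conclusion and main obstacle.} In every such product the powers of $\rho$ sum to exactly $\rho^{1-d}$ (the lone ``$+1$'' coming from the unique $N$-factor in \eqref{split}), and the remaining factors are trigonometric polynomials and $PW$-functions with exponential types summing to at most $\mathbf{T}$. Since multiplication by $e^{\pm iT\rho}$ merely shifts Fourier support and since products of $PW$-functions add exponential types (the convolution of their Fourier densities lying in $L_2$ on the enlarged interval), each such product lies in $PW(\mathbf{T})$; summing gives $\rho^{1-d}\kappa(\rho)$ with $\kappa \in PW(\mathbf{T})$. A constant term can survive the expansion only through a factor $\gamma_j \prod_{k\ne j}\Delta_k^{D,0}(\rho^2)$ with $d_j = 0$, which forces every $d_k = 0$ and hence $d = 0$; evaluating both sides at $\rho = 0$ then identifies the accumulated constant with $\Delta(0)$. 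The main obstacle is the careful bookkeeping of $\rho$-powers together with the parity-at-zero constraints (automatic from $\Delta(\rho^2)$ being even and entire in $\rho$), which guarantees that $\rho^{1-d}\kappa(\rho)$ is a genuine entire function of $\la$ even when $d \ge 2$; these constraints make the removable singularities in each individual product cancel correctly.
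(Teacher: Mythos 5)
Your induction over the recursive formula \eqref{split}, with the base case read off from Proposition~\ref{prop:SC} and the power/type bookkeeping done via Lemma~\ref{lem:Delta0} and products of trigonometric polynomials with Paley--Wiener functions, is essentially the paper's own (unwritten) argument: the corollary is stated there as a direct consequence of Definition~\ref{def:char}, Proposition~\ref{prop:SC}, and Lemma~\ref{lem:Delta0}. One small caution: the assertion that a surviving constant ``forces every $d_k=0$'' is not quite right term-by-term --- if $d_j=0$ but $d>0$, the term $\gamma_j\prod_{k\ne j}\Delta_k^{D}(\rho^2)$ is not individually of the form $\rho^{1-d}\times PW(\mathbf{T})$ --- but your closing observation saves the day: $\kappa(\rho):=\rho^{d-1}\bigl(\Delta(\rho^2)-\Delta^0(\rho^2)\bigr)$ is entire, of exponential type at most $\mathbf{T}$, and $L_2(\mathbb R)$ (the term-wise estimates give square-integrability for $|\rho|\ge 1$, continuity handles $|\rho|\le 1$), so the Paley--Wiener theorem yields $\kappa\in PW(\mathbf{T})$ for the sum even though individual products may have removable singularities at $\rho=0$.
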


\begin{example} \label{ex:star}
The characteristic function for the Sturm-Liouville problem on the star-shaped graph in Fig.~\ref{fig:star} with the Dirichlet boundary conditions \eqref{bc} has the form
$$
\Delta(\la) = \sum_{j = 1}^{m-1} S_j^{[1]}(T_j, \la) \prod_{\substack{k = 1 \\ k \ne j}}^m S_k(T_k,\la) + C_m(T_m,\la) \prod_{k = 1}^{m-1} S_k(T_k,\la)
$$
and the asymptotics
$$
\Delta(\rho^2) = \rho^{1-m} \left( \sum_{j = 1}^m \cos \rho T_j \prod_{\substack{k = 1 \\ k \ne j}}^m \sin \rho T_k + \kappa(\rho) \right).
$$
\end{example}

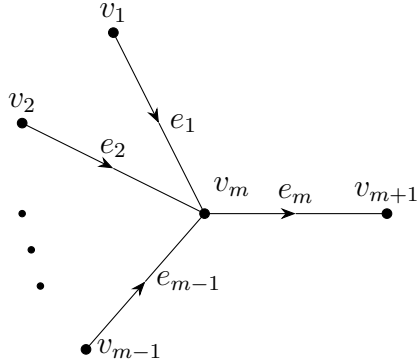
\begin{figure}[h!]
\centering
\begin{tikzpicture}[scale = 1.2]
\filldraw (1, 2) circle (1.5pt) node[anchor=south]{$v_1$};
\filldraw (0, 1) circle (1.5pt) node[anchor=south]{$v_2$};
\filldraw (0.7, -1.5) circle (1.5pt) node[anchor=west]{$v_{m-1}$};
\filldraw (2, 0) circle (1.5pt);
\draw (2.3, 0.3) node{$v_m$};
\filldraw (4, 0) circle (1.5pt) node[anchor=south]{$v_{m+1}$};
\draw[-{Stealth[length=2mm]}] (1, 2) -- (1.5, 1) node[anchor=west]{$e_1$};
\draw (1.5, 1) -- (2, 0);
\draw[-{Stealth[length=2mm]}] (0, 1) -- (1, 0.5) node[anchor=south]{$e_2$};
\draw (1, 0.5) -- (2, 0);
\draw[-{Stealth[length=2mm]}] (0.7, -1.5) -- (1.35, -0.75) node[anchor=west]{$e_{m-1}$};
\draw (1.35, -0.75) -- (2, 0);
\draw[-{Stealth[length=2mm]}] (2, 0) -- (3, 0) node[anchor=south]{$e_m$};
\draw (3, 0) -- (4, 0);
\filldraw (0, 0) circle (1pt);
\filldraw (0.2, -0.8) circle (1pt);
\filldraw (0.1, -0.4) circle (1pt);
\end{tikzpicture}    
\caption{Star-shaped graph in Example~\ref{ex:star}}
\label{fig:star}
\end{figure}

Using transformation operators for the solutions $S_j(x_j, \la)$ and $\vv_j(x_j, \la)$ (see \cite{HM04, Bond21}), we obtain the following lemma on the Lipschitz continuity of these solutions and their quasi-derivatives with respect to the potential $\sigma_j$.

\begin{lem} \label{lem:uniSC}
Let $\Omega > 0$ and $j \in \{ 1, 2, \dots, m \}$ be fixed. Then, for any complex-valued functions $\sigma_j$ and $\tilde \sigma_j$ in $L_2(0,T_j)$ such that
$$
\| \sigma_j \|_{L_2(0,T_j)} \le \Omega, \quad \| \tilde \sigma_j \|_{L_2(0,T_j)} \le \Omega,
$$
and for each fixed $x \in (0,T_j]$, there hold
\begin{align} \label{uniS}
& \| \rho \hat S_j(x,\rho^2) \|_{L_2(\mathbb R)} \le C \| \hat \sigma_j \|_{L_2(0,x)}, \quad \| \hat S_j^{[1]}(x, \rho^2) \|_{L_2(\mathbb R)} \le C \| \hat \sigma_j \|_{L_2(0,x)}, \\ \label{univv}
& \| \hat \vv_j(x, \rho^2) \|_{L_2(\mathbb R)} \le C \| \hat \sigma_j \|_{L_2(0,x)}, \quad \| \hat \eta_j(x,\rho) \|_{L_2(\mathbb R)} + |\hat c_j(x)| \le C \| \hat \sigma_j \|_{L_2(0,x)},
\end{align}
where $\eta_j(x,\rho)$ and $c_j(x)$ are the functions from \eqref{intvv} and the constant $C$ depends only on $\Omega$ and $j$.
\end{lem}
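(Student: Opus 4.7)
The plan is to reduce each of the four estimates in \eqref{uniS}--\eqref{univv} to the Lipschitz continuity, in $L_2(-x,x)$, of the kernels of the transformation operators that represent $\vv_j(x,\rho^2)$, $\vv_j^{[1]}(x,\rho^2)$, $S_j(x,\rho^2)$, and $S_j^{[1]}(x,\rho^2)$. This kernel continuity is precisely the independent auxiliary result established in the appendix, so the lemma itself should require only a short derivation.

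Concretely, following \cite{HM04, Bond21}, for each fixed $x \in (0, T_j]$ the four Paley--Wiener remainders $\psi_j$, $\xi_j$, $\zeta_j$, $\eta_j$ from Proposition~\ref{prop:SC} admit representations of the form $\mathcal F(\rho) = \int_{-x}^{x} f(t) e^{i \rho t}\, dt$, with kernels $M_j(x,\cdot)$, $N_j(x,\cdot)$, $K_j(x,\cdot)$, $P_j(x,\cdot) \in L_2(-x,x)$ respectively, each determined solely by $\sigma_j|_{[0,x]}$. Subtracting the analogous formulas for $\tilde \sigma_j$ in Proposition~\ref{prop:SC} yields
$$
\rho \hat S_j(x,\rho^2) = \hat \psi_j(x,\rho), \qquad \hat S_j^{[1]}(x,\rho^2) = \hat \xi_j(x,\rho),
$$
$$
\hat \vv_j(x,\rho^2) = \hat \zeta_j(x,\rho), \qquad \hat \vv_j^{[1]}(x,\rho^2) - \hat c_j(x) = \rho \hat \eta_j(x,\rho),
$$
and Plancherel's identity converts each $L_2(\mathbb R)$-norm in $\rho$ on the left into a constant multiple of the corresponding $L_2(-x,x)$-norm of the kernel difference $\hat M_j(x,\cdot)$, $\hat N_j(x,\cdot)$, $\hat K_j(x,\cdot)$, or $\hat P_j(x,\cdot)$. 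The appendix's Lipschitz bound (in the form $\|\hat M_j(x,\cdot)\|_{L_2(-x,x)} \le C \|\hat \sigma_j\|_{L_2(0,x)}$, with $C = C(\Omega, j)$, and likewise for $\hat N_j$, $\hat K_j$, $\hat P_j$) then delivers all the estimates in \eqref{uniS}--\eqref{univv} apart from the constant-in-$\rho$ term $|\hat c_j(x)|$.

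To handle $\hat c_j(x)$, I would use $c_j(x) = \vv_j^{[1]}(x, 0)$, so the vector $(\vv_j(\cdot, 0), \vv_j^{[1]}(\cdot, 0))$ solves the first-order linear Volterra system
$$
\frac{d}{dx}\begin{pmatrix} \vv_j \\ \vv_j^{[1]} \end{pmatrix} = \begin{pmatrix} \sigma_j & 1 \\ -\sigma_j^2 & -\sigma_j \end{pmatrix} \begin{pmatrix} \vv_j \\ \vv_j^{[1]} \end{pmatrix}, \qquad \begin{pmatrix} \vv_j(0,0) \\ \vv_j^{[1]}(0,0) \end{pmatrix} = \begin{pmatrix} 1 \\ 0 \end{pmatrix}.
$$
Subtracting the system for $\tilde \sigma_j$ and applying a standard Gr\"onwall-type estimate (using $\sigma_j \in L_2 \subset L_1$, $\sigma_j^2 \in L_1$, with all relevant $L_1$-norms uniformly controlled for $\sigma_j, \tilde\sigma_j \in \mathcal B_\Omega$) produces $|\hat c_j(x)| \le C \|\hat \sigma_j\|_{L_2(0,x)}$, completing the proof.

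The main technical obstacle is not this short reduction but the appendix result itself. In the $W_2^{-1}$ setting the transformation kernels lie only in $L_2$ and the Gelfand--Levitan-type integral equations they satisfy contain $\sigma_j$ multiplicatively, so extracting the Lipschitz dependence on $\sigma_j$ requires integrating to rewrite the equations in a form amenable to an $L_2$ contraction argument and then tracking the contraction constant explicitly in $\|\sigma_j\|_{L_2(0,T_j)}$ to ensure uniformity over the ball $\mathcal B_\Omega$.
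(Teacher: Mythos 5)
Your reduction step is sound and is essentially the same one the paper uses: by \eqref{intS}--\eqref{intvv} and the transformation-operator representations \eqref{transK}--\eqref{transN}, Plancherel converts each $L_2(\mathbb R)$-norm in $\rho$ into the $L_2(0,x)$-norm of the corresponding kernel difference, so the lemma follows once one has $\| \hat{\mathscr K}(x,\cdot)\|_{L_2(0,x)}, \|\hat{\mathscr N}(x,\cdot)\|_{L_2(0,x)}, |\hat{\mathscr C}(x)| \le C \|\hat\sigma_j\|_{L_2(0,x)}$ with $C = C(\Omega)$. Your separate treatment of $\hat c_j(x)$ via the first-order system at $\la = 0$ and Gr\"onwall is a legitimate alternative to the paper's handling of $\mathscr C$ inside the series machinery. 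But the proof has a genuine gap: the kernel Lipschitz estimate \eqref{difest}, which is the entire substance of the lemma, is not proved. You cite it as ``the appendix's Lipschitz bound,'' yet the appendix \emph{is} the proof of this lemma, so in a blind proof that bound is exactly what you must establish, and your one-sentence sketch of how to get it is not adequate.

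The sketch is also the wrong mechanism. An ``$L_2$ contraction argument'' with the contraction constant ``tracked explicitly in $\|\sigma_j\|_{L_2(0,T_j)}$'' does not give uniformity over $\mathcal B_\Omega$ for arbitrary $\Omega$: the integral operators appearing in the equations for the kernels have norms growing with $\|\sigma_j\|_{L_2}$, so for large $\Omega$ they are not contractions and no choice of constant rescues a fixed-point iteration in that naive form (one would at least need to subdivide the interval or introduce weighted norms, none of which you indicate). The paper instead uses the Volterra-type successive-approximation series \eqref{ser} of \cite{Bond21, KB25}, whose $n$-th terms obey bounds of the form \eqref{estKn} with the factor $Q^n(x)\sqrt{x^{n-1}/(n-1)!}$; the factorial decay makes the series converge for every radius $\Omega$, and the Lipschitz dependence on $\sigma_j$ is extracted by proving, term by term and by induction, the difference estimates \eqref{estdifK} (via Cauchy--Bunyakovsky--Schwarz applied to each integral in the recursion) and then summing. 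Without this, or an equivalent quantitative argument valid on all of $\mathcal B_\Omega$, the estimates \eqref{uniS}--\eqref{univv} are asserted rather than proved.
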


The proof of Lemma~\ref{lem:uniSC} is presented in Appendix. Lemma~\ref{lem:uniSC} and Definition~\ref{def:char} imply the following corollary.

\begin{cor} \label{cor:uniD}
Let $\Omega > 0$ be fixed.
Suppose that $\sigma$ and $\tilde \sigma$ lie in $\mathcal B_{\Omega}$, $\Delta(\la) = \Delta(\la; G, \sigma, BC)$ and $\tilde \Delta(\la) = \Delta(\la; G, \tilde\sigma, BC)$. If $d > 0$, then 
$$
\| \rho^{d-1} \hat \Delta(\rho^2) \|_{L_2(\mathbb R)} \le C \| \hat\sigma \|_{L_2(G)}
$$
where the constant $C$ depends only on $\Omega$. 
In other words, the remainder term $\kappa(\rho)$ in the asymptotics \eqref{asymptD} is Lipschitz continuous by the potential $\sigma$ as the mapping from $L_2(G)$ to $L_2(\mathbb R)$.
In the case $d = 0$, we similarly get
$$
\| \hat \kappa \|_{L_2(\mathbb R)} + |\hat \Delta(0)| \le C \| \sigma \|_{L_2(G)}.
$$
\end{cor}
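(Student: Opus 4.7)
The plan is to prove the corollary by induction on the number of edges $m$, mirroring the recursive structure of Definition~\ref{def:char}. In the base case $m = 1$, $\hat\Delta$ reduces to $\hat S_1(T_1,\cdot)$, $\hat S_1^{[1]}(T_1,\cdot)$, $\hat\vv_1(T_1,\cdot)$, or $\hat\vv_1^{[1]}(T_1,\cdot)$, corresponding to $d = 2,\,1,\,1,\,0$ respectively. Lemma~\ref{lem:uniSC} directly supplies the required bound in each case; the Neumann--Neumann case uses the asymptotics $\hat\Delta(\rho^2) = \rho\hat\eta_1(T_1,\rho) + \hat c_1(T_1)$ from Proposition~\ref{prop:SC} to separate $\hat\kappa = \hat\eta_1(T_1,\cdot)$ and $\hat\Delta(0) = \hat c_1(T_1)$.

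For $m > 1$, fix an internal vertex $u$ of degree $r$, split $G$ into subtrees $G_1,\ldots,G_r$, and apply the recursion \eqref{split}. Writing $d_j$ for the number of Dirichlet BC among BC restricted to $\partial G_j \setminus \{u\}$, one has $d_j^D = d_j + 1$, $d_j^N = d_j$, and $\sum_j d_j = d$. Telescope $\Delta - \tilde\Delta$ factor by factor so that $\hat\Delta$ becomes a sum of terms, each containing exactly one hatted factor. A routine exponent count shows that multiplying any such term by $\rho^{d-1}$ distributes as
\begin{equation*}
\rho^{d-1} \cdot (\text{term}) = \bigl(\rho^{d_*-1}\widehat{\Delta_*}\bigr) \prod_{l} \bigl(\rho^{d_l-1} F_l\bigr),
\end{equation*}
where each $F_l$ is one of $\Delta_l^?$ or $\tilde\Delta_l^?$ on a subtree with the appropriate BC at $u$. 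By Lemma~\ref{lem:Delta0} and Corollary~\ref{cor:asymptD}, each normalized non-hat factor takes the form $P_l + \kappa_l$ with $P_l$ a trigonometric polynomial bounded uniformly on $\mathbb R$ and $\kappa_l \in PW(\mathbf{T}_l)$; uniform bounds on $\|\kappa_l\|_{L_2(\mathbb R)}$ on $\mathcal B_\Omega$ follow from the inductive hypothesis specialized to $\tilde\sigma = 0$. By induction, the hat factor is $\widehat{\kappa_*} \in PW(\mathbf{T}_*)$ with $\|\widehat{\kappa_*}\|_{L_2(\mathbb R)} \le C\|\hat\sigma\|_{L_2(G_*)}$.

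Expanding $\prod_l(P_l + \kappa_l)$ over subsets reduces the bound to estimating products of bounded and Paley-Wiener functions in $L_2(\mathbb R)$. The key auxiliary fact is the Paley-Wiener algebra property: $PW(T_1) \cdot PW(T_2) \subset PW(T_1+T_2)$ with $\|fg\|_{L_2(\mathbb R)} \le C(T_1,T_2)\,\|f\|_{L_2}\|g\|_{L_2}$. This is immediate from Plancherel, since the Fourier transform of $fg$ is the convolution of two compactly supported $L_2$-functions, which is continuous and supported in $[-(T_1+T_2),T_1+T_2]$, hence in $L_2(\mathbb R)$ with the stated norm bound. Iterating this with the uniform pointwise bounds on the $P_l$ controls each telescoped term by $C\|\hat\sigma\|_{L_2(G)}$, finishing the case $d > 0$.

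For $d = 0$ (all Neumann BC), apply the same telescoping analysis to the decomposition $\hat\Delta(\rho^2) = \hat\Delta(0) + \rho\hat\kappa(\rho)$ from Corollary~\ref{cor:asymptD}. The constant $\hat\Delta(0)$ is estimated by evaluating the recursion at $\rho = 0$ and invoking the pointwise Lipschitz bounds on $\vv_j(x,0)$, $\vv_j^{[1]}(x,0)$, $S_j(x,0)$, $S_j^{[1]}(x,0)$ that follow from the transformation-operator estimates of the Appendix (equivalently, the $\rho=0$ specialization of Lemma~\ref{lem:uniSC}). The $L_2$ bound on $\hat\kappa$ is then recovered by handling the $d_j^N = 0$ factors in the recursion via the normalized quantity $\rho^{-1}(\Delta_j^N(\rho^2) - \Delta_j^N(0)) = P_j^N + \kappa_j^N$, again of bounded-plus-Paley-Wiener type. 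I expect the main obstacle to be the algebraic bookkeeping in the telescoped expansion, verifying that the $\rho$-exponents cancel exactly as claimed, together with the careful isolation of the $\rho^{-1}$ singularity into the $\hat\Delta(0)$ term in the $d = 0$ case.
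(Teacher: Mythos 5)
Your proposal is correct and follows essentially the route the paper intends: the paper states Corollary~\ref{cor:uniD} as an immediate consequence of Lemma~\ref{lem:uniSC} and the recursive Definition~\ref{def:char}, and your induction over the splitting \eqref{split}, with telescoping of the products, the normalization $\rho^{d_l-1}$ of each subtree factor into a bounded trigonometric part plus an $L_2(\mathbb R)$ Paley--Wiener remainder, and the $PW(T_1)\cdot PW(T_2)\subset PW(T_1+T_2)$ algebra estimate, is exactly the natural realization of that argument, including the correct treatment of the all-Neumann ($d_j=0$) factors via the constant $\Delta_j^N(0)$.
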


According to part (i) of Lemma~\ref{lem:Delta0}, the zeros of $\Delta^0(\la)$ are real and non-negative. Consequently, taking Corollaries~\ref{cor:asymptD} and~\ref{cor:uniD} into account, we get the following properties for the zeros of $\Delta(\la)$.

\begin{cor} \label{cor:reg}
The zeros of the characteristic function $\Delta(\la) = \Delta(\la; G, \sigma, BC)$ lie in the region
\begin{equation} \label{reg}
\mbox{Re} \, \la \le -C, \quad |\mbox{Im} \, \sqrt{\la}| \le C. 
\end{equation}
For all $\sigma \in \mathcal B_{\Omega}$, the same constant $C$ in \eqref{reg} can be chosen depending only on $\Omega$.
\end{cor}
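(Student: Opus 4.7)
The plan is to pass to the variable $\rho=\sqrt{\la}$ and to compare $\Delta(\rho^2)$ with the unperturbed characteristic function through the factorization coming from Lemma~\ref{lem:Delta0} and Corollary~\ref{cor:asymptD}:
$$
\Delta(\rho^2) \;=\; \rho^{1-d}\bigl(P_{m,\mathbf{T}}(\rho)+\kappa(\rho)\bigr)
$$
(with a harmless additive constant absorbed when $d=0$). The non-trivial zeros of $\Delta(\rho^2)$ coincide with those of $F(\rho):=P_{m,\mathbf{T}}(\rho)+\kappa(\rho)$, and it suffices to show that $F(\rho)\ne 0$ whenever $|\mathrm{Im}\,\rho|$ exceeds some $C=C(\Omega,G)$. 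The bound on $\mathrm{Re}\,\la$ in~\eqref{reg} will then follow from the identity $\mathrm{Re}\,\la=(\mathrm{Re}\,\rho)^2-(\mathrm{Im}\,\rho)^2$.

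The first step is to upgrade the $L_2$-bound on $\kappa$ to a sharp pointwise estimate off the real axis. Corollary~\ref{cor:uniD} gives $\|\kappa\|_{L_2(\mathbb R)}\le C(\Omega)$, and by Plancherel the function $f$ in~\eqref{PW} then satisfies $\|f\|_{L_2(-\mathbf{T},\mathbf{T})}\le C(\Omega)$. The Cauchy--Schwarz inequality yields, for $|\mathrm{Im}\,\rho|\ge 1$,
$$
|\kappa(\rho)| \;\le\; \|f\|_{L_2}\Bigl(\int_{-\mathbf{T}}^{\mathbf{T}}e^{-2t\,\mathrm{Im}\,\rho}\,dt\Bigr)^{\!1/2} \;\le\; \frac{C(\Omega)\,e^{|\mathrm{Im}\,\rho|\mathbf{T}}}{\sqrt{|\mathrm{Im}\,\rho|}}.
$$
The crucial feature is the extra factor $|\mathrm{Im}\,\rho|^{-1/2}$, which eventually beats any fixed constant as $|\mathrm{Im}\,\rho|\to\infty$.

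The second step matches this with a lower bound $|P_{m,\mathbf{T}}(\rho)|\ge c\,e^{|\mathrm{Im}\,\rho|\mathbf{T}}$. Lemma~\ref{lem:Delta0}(ii) delivers it in the sectors $\epsilon\le\pm\arg\rho\le\pi-\epsilon$. In the complementary ``near--real-axis'' region, where $|\mathrm{Im}\,\rho|$ can still be taken large, the same bound follows by direct inspection of the dominant exponential $e^{\pm i\mathbf{T}\rho}$ of the trigonometric polynomial $P_{m,\mathbf{T}}(\rho)$; its coefficient is a fixed nonzero quantity determined by the edge lengths through the recursive Definition~\ref{def:char} applied to $\sigma\equiv 0$. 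Combining the two estimates, $|\kappa(\rho)|<|P_{m,\mathbf{T}}(\rho)|$ once $|\mathrm{Im}\,\rho|\ge C(\Omega)$, which precludes zeros of $F$ in that range.

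The main obstacle is the extension of the lower bound on $|P_{m,\mathbf{T}}(\rho)|$ from the sectors of Lemma~\ref{lem:Delta0}(ii) to horizontal strips, since the original statement excludes directions close to the real axis. Once this is handled, the remaining argument is routine and the uniform dependence on $\Omega$ is automatic from Corollary~\ref{cor:uniD}.
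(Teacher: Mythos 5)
Your argument is correct and is essentially the proof the paper intends: the paper states Corollary~\ref{cor:reg} as an immediate consequence of Lemma~\ref{lem:Delta0} and Corollaries~\ref{cor:asymptD}--\ref{cor:uniD}, and your write-up simply makes explicit the quantitative comparison $|\kappa(\rho)|\le C(\Omega)\,e^{|\mathrm{Im}\,\rho|\mathbf{T}}/\sqrt{|\mathrm{Im}\,\rho|}$ (from the Paley--Wiener form and Corollary~\ref{cor:uniD} with $\tilde\sigma=0$) against a lower bound $|P_{m,\mathbf{T}}(\rho)|\ge c\,e^{|\mathrm{Im}\,\rho|\mathbf{T}}$ for $|\mathrm{Im}\,\rho|$ large, which the paper leaves implicit. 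The one step you assert rather than prove --- that the extreme exponentials $e^{\mp i\mathbf{T}\rho}$ in $P_{m,\mathbf{T}}$ carry nonzero coefficients, which is exactly what extends the lower bound from the sectors of Lemma~\ref{lem:Delta0}(ii) to the full region $|\mathrm{Im}\,\rho|\ge C$ --- is true and can be closed either by the induction you indicate or directly from Lemma~\ref{lem:Delta0}(ii) evaluated along the imaginary axis, since a vanishing extreme coefficient would cap the growth there at $\exp\bigl((\mathbf{T}-2\min_j T_j)|\rho|\bigr)$, contradicting that two-sided estimate.
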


\section{Main results and proof strategy} \label{sec:main}


Denote by $\Delta(\la)$ and $\Delta_k(\la)$ ($v_k \in \partial G'$) the characteristic functions of the boundary value problems problems $\mathcal L$ and $\mathcal L_k$, respectively, constructed according to Definition~\ref{def:char}. We study the following inverse spectral problem.

\begin{ip} \label{ip:main}
Given $\Delta(\la)$ and $\Delta_k(\la)$ for all $v_k \in \partial G'$, find $\{ \sigma_j \}_1^m$.
\end{ip}

Inverse Problem~\ref{ip:main} is equivalent to the recovery of the potentials $\{ \sigma_j \}_1^m$ from the spectra of the eigenvalue problems $\mathcal L$ and $\mathcal L_k$ ($k = \overline{1,m}$), and so generalizes the Borg problem \cite{Borg46}. The solution of Inverse Problem~\ref{ip:main} is unique, which is proved similarly to \cite{Yur05, FIY08, Vas19}. 

The first result of this paper is the following theorem, which establishes the uniform stability of Inverse Problem~\ref{ip:main} for potentials $\sigma$ in the ball $\mathcal B_{\Omega}$.

\begin{thm} \label{thm:uni}
Suppose that $\Omega > 0$. Then, for any $\sigma = \{ \sigma_j \}_1^m$ and $\tilde \sigma = \{ \tilde \sigma_j \}_1^m$ in $\mathcal B_{\Omega}$, there holds
\begin{equation} \label{estsi}
\| \hat \sigma_j \|_{L_2(0,T_j)} \le C \de, \quad j = \overline{1,m},
\end{equation}
where 
\begin{equation} \label{defde}
\de := \| \rho^{b-1} \hat \Delta(\rho^2)\|_{L_2(\mathbb R)} +
\sum_{v_k \in \partial G'} \| \rho^{b-2} \hat \Delta_k(\rho^2) \|_{L_2(\mathbb R)}
\end{equation}
and the constant $C$ depends only on $\Omega$.
\end{thm}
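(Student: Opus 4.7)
\bigskip
\noindent\textbf{Proof proposal for Theorem~\ref{thm:uni}.}
The plan is to establish \eqref{estsi} by induction on the number $m$ of edges, peeling off one leaf edge at each step and invoking the reconstruction results of Sections~\ref{sec:bound} and~\ref{sec:return}. Fix any $v_k \in \partial G'$ and let $u = v_{p(k)}$. Splitting $G$ at $u$ according to Definition~\ref{def:char}, so that the subtree containing $e_k$ is the single edge $[v_k, u]$, one computes
\begin{align*}
\Delta(\la) &= S_k^{[1]}(T_k, \la)\, P^D(\la) + S_k(T_k, \la)\, P^N(\la), \\
\Delta_k(\la) &= \vv_k^{[1]}(T_k, \la)\, P^D(\la) + \vv_k(T_k, \la)\, P^N(\la),
\end{align*}
where $P^D, P^N$ depend only on $\{\sigma_j\}_{j \ne k}$ and encode the spectral information of the pruned tree $\tilde G := G \setminus e_k$. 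Because the Wronskian identity $\vv_k S_k^{[1]} - \vv_k^{[1]} S_k \equiv 1$ makes the coefficient matrix unimodular, this system is algebraically invertible, yielding $P^D, P^N$ as explicit bilinear combinations of $\Delta, \Delta_k, S_k, \vv_k$. The base case $m = 1$ reduces to the classical two-spectra Borg problem on $[0, T_1]$ with $W_2^{-1}$-potential, whose uniform stability follows from a direct contour-integral reconstruction formula combined with Lemma~\ref{lem:uniSC} and Corollary~\ref{cor:uniD}.

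In the inductive step, Stage~1 (Section~\ref{sec:bound}) extracts $\sigma_k$ from $\Delta$ and $\Delta_k$: the representation for $\Delta_k$ has the analytic structure of a Borg-type problem on $[0, T_k]$ perturbed by the external Weyl-type function $m(\la) := P^N(\la)/P^D(\la)$, so one can derive a pointwise reconstruction formula for $\sigma_k(x)$ by contour-integrating an appropriate combination of $\Delta, \Delta_k, \Delta^0, \Delta_k^0$ along expanding contours that avoid all zeros of the unperturbed characteristic functions. The uniform zero-localization of Corollary~\ref{cor:reg}, the asymptotics of Corollary~\ref{cor:asymptD}, and Lemma~\ref{lem:Delta0} guarantee that these contours and the associated bounds can be chosen depending only on $\Omega$. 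Combining the reconstruction formula with Corollary~\ref{cor:uniD} then gives
$$
\| \hat\sigma_k \|_{L_2(0, T_k)} \le C\de, \qquad C = C(\Omega).
$$

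In Stage~2 (Section~\ref{sec:return}), one uses the Stage~1 bound together with Lemma~\ref{lem:uniSC} to control $\hat S_k^{[1]}(T_k, \cdot)$, $\hat S_k(T_k, \cdot)$, $\hat\vv_k^{[1]}(T_k, \cdot)$ and $\hat\vv_k(T_k, \cdot)$ in the weighted $L_2(\mathbb R)$-norms featured in \eqref{defde}. Substituting into the inversion formulas for $P^D, P^N$ and repeating the analogous splitting of each $\Delta_j$ with $v_j \in \partial G' \setminus v_k$, one obtains the characteristic data for the pruned tree $\tilde G$ together with an estimate $\tilde\de \le C\de$. Since the restricted potentials $\{\sigma_j\}_{j \ne k}$ on $\tilde G$ still lie in $\mathcal B_\Omega$, the inductive hypothesis applied to $\tilde G$ yields $\|\hat\sigma_j\|_{L_2(0, T_j)} \le C'\de$ for every $j \ne k$, which together with Stage~1 completes the proof.

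The principal obstacle is Stage~2, the transition through the vertex $u$. When $u$ has degree $2$ in $G$, it becomes a new leaf of $\tilde G$ and $P^D, P^N$ are themselves genuine characteristic functions of $\tilde G$ with Dirichlet or Neumann at $u$, fitting Inverse Problem~\ref{ip:main} on $\tilde G$ directly. But when $u$ has degree $\ge 3$, the function $P^D$ is only a product of subtree characteristic functions and is not of the form required on $\tilde G$; to preserve the inductive structure one must either peel off all leaves sharing the parent $u$ simultaneously, or strengthen the inductive statement so that such product-type data are admissible. A further delicate point is bookkeeping the weight exponents $\rho^{b-1}, \rho^{b-2}$ appearing in \eqref{defde} through these algebraic rearrangements, so that the Lipschitz constants remain uniform over $\mathcal B_\Omega$ and the analogous quantity $\tilde\de$ on $\tilde G$ is consistent with Corollary~\ref{cor:uniD} applied there.
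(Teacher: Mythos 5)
Your Stage~1 is essentially the paper's Theorem~\ref{thm:unibound} (Section~\ref{sec:bound}): recovery of $\sigma_k$ on a boundary edge from the pair $(\Delta,\Delta_k)$ by a contour-integral reconstruction formula, uniformly over $\mathcal B_{\Omega}$ (you gloss over the approximation of distributional potentials by $\mathring{W}_2^1$ ones needed to justify \eqref{recsi} and the $\tfrac{1}{2\la}$ normalization of Lemma~\ref{lem:int0}, but the idea is right). The genuine gap is Stage~2. Your induction prunes a single leaf edge $e_k$ and claims to reconstitute the full data of Inverse Problem~\ref{ip:main} on $\tilde G = G\setminus e_k$, and this fails at two points. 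First, as you admit, when the parent $u$ has degree $\ge 3$ the function $P^D$ is a product of subtree characteristic functions and is not admissible data on $\tilde G$; neither of your proposed fixes is carried out, and ``peeling off all leaves sharing the parent'' is still insufficient when some children of $u$ carry non-trivial subtrees. Second --- and this you do not flag --- the step ``repeating the analogous splitting of each $\Delta_j$'' does not produce the data of $\tilde G$ at the other boundary vertices: splitting $\Delta_j$ ($j\ne k$) at $u$ gives the single identity $\Delta_j = S_k^{[1]}(T_k,\la)\,P_j^D(\la) + S_k(T_k,\la)\,P_j^N(\la)$ with \emph{two} unknown entire functions $P_j^D,P_j^N$; the companion characteristic function with Neumann conditions at both $v_k$ and $v_j$, which would supply a second equation, is not among the given data. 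Hence $\Delta_j(\tilde G)$ cannot be extracted this way, the estimate $\tilde\de\le C\de$ has no basis, and the induction does not close.

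The paper avoids both difficulties by a structurally different recursion: it never passes the full data set of Inverse Problem~\ref{ip:main} to a smaller tree. Since the boundary-edge recovery needs only the local pair (the Weyl function $M_k$), the algorithm propagates one pair of characteristic functions per active boundary vertex, and the transition through a vertex $v_p$ is performed only after \emph{all} potentials on the whole subtree $g_p$ below $v_p$ are known; then the coefficients $\Delta^{DD},\Delta^{DK},\Delta^{ND},\Delta^{NK}$ of the $2\times 2$ system \eqref{sys} are computable and the unknowns are exactly the pair $(\Delta_p^N,\Delta_p^D)$ for the complementary tree $G_p$. Your unimodular-Wronskian inversion covers only the special case when $g_p$ is the single edge $e_k$ (where the paper notes $A\equiv -1$); in general the determinant is $A=-\bigl(\Delta^D\bigr)^2$ (Lemma~\ref{lem:E}), which has infinitely many zeros, and the uniform bound requires $A(\rho^2)\asymp\rho^{2(1-b_p)}$ on the shifted contour $\ga$ together with the Lipschitz estimates \eqref{estDelD}--\eqref{estE1} and the weight bookkeeping $B_p-1+b_p=b$ --- none of which appears in your sketch. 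Repairing your scheme along these lines leads precisely to Theorems~\ref{thm:unibound} and~\ref{thm:uniaux}, from which Theorem~\ref{thm:uni} follows as in the paper.
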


Let us outline the proof of Theorem~\ref{thm:uni}.
Fix an index $k$ of a boundary vertex $v_k \in \partial G'$ and consider the following auxiliary inverse problem on the edge $e_k$:

\begin{ip} \label{ip:loc}
Given $\Delta(\la)$ and $\Delta_k(\la)$, find $\sigma_k$.
\end{ip}

First, we prove the uniform stability of Inverse Problem~\ref{ip:loc}:

\begin{thm} \label{thm:unibound}
Let $\Omega > 0$ and an index $k$ such that $v_k \in \partial G'$ be fixed. Then, for any $\sigma$ and $\tilde \sigma$ in $\mathcal B_{\Omega}$, there holds
\begin{equation} \label{unik}
\| \hat \sigma_k \|_{L_2(0,T_k)} \le C \de_k,
\end{equation}
where
\begin{equation} \label{defdek}
\de_k := \| \rho^{b-1} \hat \Delta(\rho^2) \|_{L_2(\mathbb R)} + \| \rho^{b-2} \hat \Delta_k(\rho^2) \|_{L_2(\mathbb R)}.
\end{equation}
and $C$ depends only on $\Omega$ and $k$.
\end{thm}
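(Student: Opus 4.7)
The plan is to reduce the stability problem to the boundary edge $e_k$ by encoding the rest of the tree in an auxiliary Weyl-type function, and then to apply a contour-integration reconstruction formula whose stability is controlled by $\de_k$.

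\textbf{Reduction to edge $e_k$.} Assume $m \ge 2$ (the case $m=1$ is the classical interval, handled directly by Borg's two-spectra argument). Since $v_k\in\partial G'$ is a leaf distinct from the root, the parent vertex $u := v_{p(k)}$ is internal. Splitting at $u$ via \eqref{split} and applying Definition~\ref{def:char}(1) to the one-edge subtree $e_k$ gives
\begin{align*}
\Delta(\la) &= S_k^{[1]}(T_k,\la)\,\mathcal D(\la) + S_k(T_k,\la)\,\mathcal N(\la),\\
\Delta_k(\la) &= \vv_k^{[1]}(T_k,\la)\,\mathcal D(\la) + \vv_k(T_k,\la)\,\mathcal N(\la),
\end{align*}
where the entire functions $\mathcal D(\la)$ and $\mathcal N(\la)$ are built from the characteristic functions of the remaining subtrees and depend only on $\{\sigma_j\}_{j\ne k}$. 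The meromorphic function $m_k(\la) := -\Delta_k(\la)/\Delta(\la)$ then plays the role of a Weyl function on $e_k$ viewed from the leaf $v_k$, with $\sigma_k$ recoverable from $m_k$ by a Borg-type argument generalized to a $\la$-dependent boundary condition at $x = T_k$.

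\textbf{Reconstruction via contour integration.} Adapting the method of spectral mappings to $W_2^{-1}$ potentials (cf.~\cite{Bond21-mmas,Bond25}), I derive a formula of the schematic form
\begin{equation*}
\hat\sigma_k(x) = \frac{1}{2\pi i}\int_{\Gamma}\mathcal K(x,\rho)\,\bigl(m_k(\rho^2)-\tilde m_k(\rho^2)\bigr)\,d\rho,
\end{equation*}
where $\Gamma$ is a contour in the $\rho$-plane avoiding the zeros of $\Delta$ and $\tilde\Delta$, and $\mathcal K(x,\rho)$ is an explicit kernel built from the reference solutions $\tilde S_k(x,\rho^2),\tilde\vv_k(x,\rho^2)$ and their values at $T_k$. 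By Corollary~\ref{cor:reg}, the zeros of $\Delta,\tilde\Delta$ for $\sigma,\tilde\sigma \in \mathcal B_\Omega$ lie in a fixed strip depending only on $\Omega$, so $\Gamma$ can be chosen uniformly in $\sigma$, e.g.\ inside a sector $\ee \le \arg\rho \le \pi - \ee$ where Lemma~\ref{lem:Delta0} gives a lower bound on $\Delta^0$.

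\textbf{Stability estimate.} Writing
\[
m_k(\rho^2)-\tilde m_k(\rho^2) = \frac{\tilde\Delta_k(\rho^2)\hat\Delta(\rho^2)-\tilde\Delta(\rho^2)\hat\Delta_k(\rho^2)}{\Delta(\rho^2)\tilde\Delta(\rho^2)},
\]
the spectral-data differences $\hat\Delta,\hat\Delta_k$ are isolated in the numerator. The integrand on $\Gamma$ is then controlled by: lower bounds $|\Delta(\rho^2)|,|\tilde\Delta(\rho^2)| \gtrsim |\rho|^{1-b}\exp(|\mbox{Im}\,\rho|\mathbf T)$ from Lemma~\ref{lem:Delta0} (uniform over $\mathcal B_\Omega$ by a perturbative argument based on Corollary~\ref{cor:uniD}); upper bounds for $\tilde\Delta,\tilde\Delta_k,\mathcal K$ from Proposition~\ref{prop:SC}, Corollary~\ref{cor:asymptD}, and Lemma~\ref{lem:uniSC}; and a Cauchy--Schwarz/Plancherel step that converts the resulting contour integrals into the $L_2(\mathbb R)$-norms $\|\rho^{b-1}\hat\Delta(\rho^2)\|$ and $\|\rho^{b-2}\hat\Delta_k(\rho^2)\|$ that define $\de_k$. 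Squaring, integrating over $x \in [0,T_k]$, and collecting constants yields $\|\hat\sigma_k\|_{L_2(0,T_k)} \le C\de_k$ with $C = C(\Omega,k)$.

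The principal technical obstacle is twofold. First, constructing $\Gamma$ uniformly over $\mathcal B_\Omega$: the zeros of $\Delta$ themselves shift with $\sigma$, so one must combine Corollary~\ref{cor:reg} (confinement of zeros to a bounded strip) with a sector-type contour where $\Delta^0$ is bounded below by Lemma~\ref{lem:Delta0}, and then transfer the bound to $\Delta$ via the Lipschitz estimate of Corollary~\ref{cor:uniD}. Second, arranging the Plancherel/Cauchy--Schwarz step so that the final estimate involves exactly the $L_2(\mathbb R)$-norms defining $\de_k$, rather than stronger norms; this relies essentially on the Paley--Wiener structure from Corollary~\ref{cor:asymptD} and shapes the choice of contour and kernel $\mathcal K$.
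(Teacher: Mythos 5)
Your overall skeleton coincides with the paper's: the same Weyl function $M_k(\la)=-\Delta_k(\la)/\Delta(\la)$, a contour-integral reconstruction of $\hat\sigma_k$, the algebraic step isolating $\hat\Delta,\hat\Delta_k$ in the numerator of $\hat M_k$, and a Cauchy--Schwarz/Plancherel conversion to the norms in $\de_k$. However, the proposal leaves unproved exactly the parts that carry the difficulty. First, the reconstruction formula is only ``schematic'', and as described it is not correct: the exact identity used in the paper is
\begin{equation*}
\hat\sigma_k(x)=\frac{1}{2\pi i}\int_{\Gamma}\Bigl(S_k(x,\la)\tilde S_k(x,\la)-\frac{1}{2\la}\Bigr)\hat M_k(\la)\,d\la,
\end{equation*}
whose kernel involves the product of the solutions for \emph{both} potentials, not a kernel ``built from the reference solutions $\tilde S_k,\tilde\vv_k$''; a kernel depending only on $\tilde\sigma$-quantities yields at best a linearization, not an identity valid for all $\sigma,\tilde\sigma\in\mathcal B_\Omega$. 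Moreover, the subtraction of $\tfrac{1}{2\la}$ is not cosmetic: without it the integral does not converge for the potentials considered here, and its admissibility requires proving $\int_\Gamma \hat M_k(\la)\la^{-1}d\la=0$ (Lemma~\ref{lem:int0}, via the circles $\mathcal C_N$ and the residue theorem). Second, since $q_k=\sigma_k'$ is distributional, the spectral-mappings derivation is only available for $\sigma_j\in\mathring W_2^1$; the formula must then be extended to arbitrary $\sigma,\tilde\sigma\in L_2(G)$ by approximation, passing to the limit in $\hat M_k$ and in the kernel with the help of Lemma~\ref{lem:uniSC} and Corollary~\ref{cor:uniD} (Lemma~\ref{lem:rec}). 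This approximation argument is the technical core of the proof and is absent from your plan.

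The contour choice also needs repair. Taking $\Gamma$ ``inside a sector $\ee\le\arg\rho\le\pi-\ee$'' conflicts with the final step you yourself flag as essential: the passage from contour norms to the $L_2(\mathbb R)$-norms defining $\de_k$ rests on the equivalence $\|\mathcal F\|_{L_2(\mathbb R)}\asymp\|\mathcal F\|_{L_2(\ga)}$ for $\mathcal F\in PW(\mathbf T)$, which holds for the horizontal line $\ga=\{s+i\tau\}$ but not for rays bounded away from the real axis, where Paley--Wiener functions grow exponentially; in addition, the zeros of $\Delta(\rho^2)$ lie in a horizontal strip around the real $\rho$-axis, so a fixed sector cannot separate them from the contour in the way the reconstruction formula requires. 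The paper instead fixes $\ga=\{s+i\tau\}$ with $\tau$ large depending only on $\Omega$ (Corollary~\ref{cor:reg}), on which the uniform two-sided bounds $\Delta(\rho^2)\asymp\rho^{1-b}$, $\Delta_k(\rho^2)\asymp\rho^{2-b}$ of \eqref{estDk} hold for all $\sigma\in\mathcal B_\Omega$; these, together with \eqref{eqPW}, give $\|\rho^{-1}\hat M_k(\rho^2)\|_{L_2(\ga)}\le C\de_k$, after which the splitting of the integral into a Fourier-transform part and a remainder part (with $\|\psi_k(x,\cdot)\|_{L_2}$ uniformly bounded by Lemma~\ref{lem:uniSC}) yields \eqref{unik}. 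Your preliminary splitting at the parent vertex is harmless but is not needed for this theorem.
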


In order to prove Theorem~\ref{thm:unibound}, we derive a new reconstruction formula in Section~\ref{sec:bound} within the framework of the method of spectral mappings (see \cite{Yur02}). 

Next, let $v_p$ be the parent of $v_k$: $p = p(k)$, $e_k = [v_k, v_p]$. Denote by $g_p$ the directed subtree of $G$ with the root $v_p$ and by $G_p$ the directed subtree obtained from $G$ by excluding $g_p$ (see Fig.~\ref{fig:tree-split}). Thus $v_p$ is a boundary vertex in $G_p$. Denote by $\Delta_p^D(\la)$ and $\Delta_p^N(\la)$ the characteristic functions constructed by Definition~\ref{def:char} for the Sturm-Liouville problems \eqref{eqv} on the subtree $G_p$ with the boundary conditions $y_p(0) = 0$ and $y^{[1]}_p(0) = 0$, respectively, the Dirichlet conditions $y(v) = 0$ at the other boundary vertices $v \in \partial G_p \setminus v_p$, and the matching conditions \eqref{mc} at the internal vertices $v \in \mbox{int} \, G_p$. Consider the following auxiliary problem.

\begin{figure}
\centering
\begin{tikzpicture}
\filldraw (0, 1) circle (1.5pt) node[anchor=south]{$v_k$};
\filldraw (-0.5, 3) circle (1.5pt);
\filldraw (-0.5, 2) circle (1.5pt);
\draw[-{Stealth[length=2mm]}] (-0.5, 3) -- (0.35, 2.7);
\draw (0.35, 2.7) edge (1.2, 2.4);
\draw[-{Stealth[length=2mm]}] (-0.5, 2) -- (0.35, 2.2);
\draw (0.35, 2.2) edge (1.2, 2.4);
\filldraw (1.2, -0.4) circle (1.5pt);
\filldraw (1.2, 2.4) circle (1.5pt);
\filldraw (3, 1) circle (1.5pt) node[anchor=north]{$v_p$};
\draw[-{Stealth[length=2mm]}] (0, 1) -- (1.7,1);
\draw (1.7, 1) -- (3, 1);
\draw[-{Stealth[length=2mm]}] (1.2, -0.4) -- (2.1, 0.3);
\draw (2.1, 0.3) -- (3, 1);
\draw[-{Stealth[length=2mm]}] (1.2, 2.4) -- (2.1, 1.7);
\draw (2.1, 1.7) -- (3, 1);
\filldraw (4.5, 2.5) circle (1.5pt);
\draw[-{Stealth[length=2mm]}] (3, 1) -- (3.9, 1.9);
\draw (4, 1.6);
\draw (3.9, 1.9) -- (4.5, 2.5);
\filldraw (2.7, 3.9) circle (1.5pt);
\draw[-{Stealth[length=2mm]}] (2.7, 3.9) -- (3.6, 3.2);
\draw (3.6, 3.2) -- (4.5,2.5); 	
\filldraw (6, 2.5) circle (1.5pt) node[anchor=south]{$v_{m+1}$};
\draw[-{Stealth[length=2mm]}] (4.5, 2.5) -- (5.3, 2.5);
\filldraw (5.3, 2.5) -- (6, 2.5);
\draw[dashed] (4.4, 2.7) circle (2.14cm);
\draw[dashed] (0.6, 1) circle (2.4cm);
\draw (5,4) node{$G_p$};
\draw (0,-0.5) node{$g_p$};
\end{tikzpicture}
\caption{Graphs $G_p$ and $g_p$}
\label{fig:tree-split}
\end{figure}
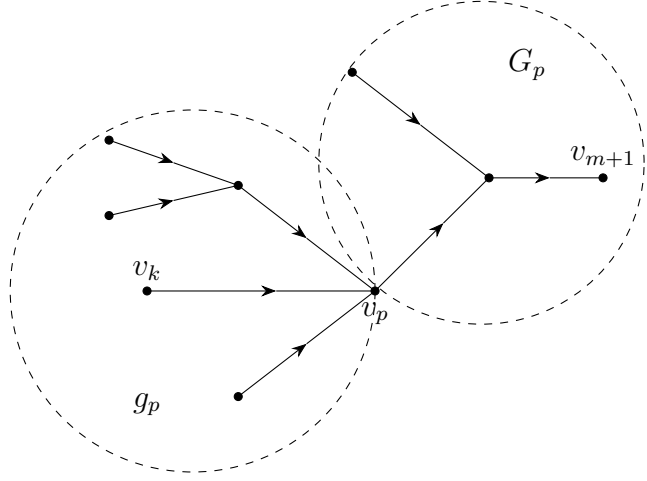

\begin{ap} \label{ap:aux}
Given the potentials $\sigma_j$ on all the edges $e_j$ of $g_p$ and the characteristic functions $\Delta(\la)$ and $\Delta_k(\la)$, find $\Delta_p^D(\la)$ and $\Delta_p^N(\la)$.
\end{ap}

Denote by $B_p$ the number of the boundary vertices in $G_p$.
Then, the uniform stability of Auxiliary Problem~\ref{ap:aux} holds in the following sense:

\begin{thm} \label{thm:uniaux}
Suppose that $\Omega > 0$. Then, for any $\sigma = \{ \sigma_j \}_1^m$ and $\tilde \sigma = \{ \tilde \sigma_j \}_1^m$ in $\mathcal B_{\Omega}$, there holds
\begin{equation} \label{uniaux}
\| \rho^{B_p-1} \hat \Delta_p^D(\rho^2) \|_{L_2(\mathbb R)} + 
\| \rho^{B_p-2} \hat \Delta_p^N(\rho^2) \|_{L_2(\mathbb R)} \le C \bigl( \| \hat \sigma \|_{L_2(g_p)} + \de_k \bigr),
\end{equation}
where $\de_k$ is defined in \eqref{defdek} and the constant $C$ depends only on $\Omega$ and $k$.
\end{thm}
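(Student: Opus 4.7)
The strategy is to split the tree $G$ at the vertex $v_p$, derive an explicit linear identity relating $\Delta_p^D,\Delta_p^N$ to $\Delta,\Delta_k$ and auxiliary characteristic functions supported on $g_p$, then take the $(\sigma,\tilde\sigma)$-difference and estimate using Paley--Wiener sampling. Let $r=|E_{v_p}|$ be the degree of $v_p$ in $G$. Applying \eqref{split} at $v_p$, the tree $G$ splits into the subtree $G_p$, the single-edge subtree $\{e_k\}$ (since $v_k\in\partial G$), and $r-2$ further subtrees $G_3,\dots,G_r$ at the remaining children of $v_p$. Let $a_i,b_i$ be the Dirichlet/Neumann-at-$v_p$ characteristic functions of $G_i$, and set $A:=\prod_{i=3}^r a_i$, $B:=\sum_{j=3}^r b_j\prod_{i\ne j}a_i$ (both depending only on $\sigma$ on $g_{p,k}:=g_p\setminus e_k$). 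The splitting formula gives a $2\times 2$ linear system for $(\Delta_p^N,\Delta_p^D)$ with right-hand side $(\Delta,\Delta_k)$. Using the Wronskian identity $\vv_k S_k^{[1]}-\vv_k^{[1]}S_k\equiv 1$, the determinant of this system equals $-A^2$, and inversion yields
\begin{align*}
A\Delta_p^D&=\vv_k(T_k,\la)\Delta-S_k(T_k,\la)\Delta_k,\\
A\Delta_p^N+B\Delta_p^D&=S_k^{[1]}(T_k,\la)\Delta_k-\vv_k^{[1]}(T_k,\la)\Delta.
\end{align*}

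Subtracting the $\tilde\sigma$-version of the first identity,
\begin{equation*}
\tilde A\,\hat\Delta_p^D=\hat\vv_k\Delta+\tilde\vv_k\hat\Delta-\hat S_k\Delta_k-\tilde S_k\hat\Delta_k-\hat A\,\Delta_p^D,
\end{equation*}
with an analogous relation for $\hat\Delta_p^N$ that additionally involves $\tilde B\hat\Delta_p^D$ and $\hat B\,\Delta_p^D$. By Corollary~\ref{cor:asymptD}, the leading trigonometric-polynomial parts of $\Delta_p^D,\Delta_p^N$ are $\sigma$-independent, so $\rho^{B_p-1}\hat\Delta_p^D(\rho^2)$ and $\rho^{B_p-2}\hat\Delta_p^N(\rho^2)$ belong to $PW(T_{G_p})$, where $T_{G_p}$ is the total length of $G_p$.

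The next step is to choose a separated real sequence $\{\rho_n\}$ of density $T_{G_p}/\pi$ (a Riesz basis for $PW(T_{G_p})$) such that $|\rho_n^{b-B_p}\tilde A(\rho_n)|\ge c=c(\Omega)>0$ uniformly for $\tilde\sigma\in\mathcal B_\Omega$. Such a sequence exists because Corollary~\ref{cor:uniD} and the inclusion $PW(T)\subset L_\infty$ imply that $\rho^{b-B_p}\tilde A(\rho)$ is the sum of a fixed trigonometric polynomial (whose real zeros are separated, by Lemma~\ref{lem:Delta0}) and a uniformly $L_\infty$-bounded PW perturbation; it suffices to take an arithmetic grid shifted to avoid fixed neighborhoods of the zeros of the limiting polynomial. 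Combining the frame property of $\{\rho_n\}$ for $PW(T_{G_p})$ with the Plancherel--Polya inequality on separated sequences and pointwise division by $\tilde A$ at the sample points yields
\begin{equation*}
\|\rho^{B_p-1}\hat\Delta_p^D\|_{L_2(\mathbb R)}^2\le C\sum_n |\rho_n^{b-1}F(\rho_n)|^2\le C\|\rho^{b-1}F\|_{L_2(\mathbb R)}^2,
\end{equation*}
where $F$ is the right-hand side of the difference identity.

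Each summand of $\rho^{b-1}F$ is controlled in $L_2(\mathbb R)$ by $C(\|\hat\sigma\|_{L_2(g_p)}+\de_k)$, using Lemma~\ref{lem:uniSC} for $\hat\vv_k,\hat S_k$, the analogue of Corollary~\ref{cor:uniD} applied to the subtrees $G_i\subset g_{p,k}$ for $\hat A$, the definition \eqref{defdek} for $\hat\Delta,\hat\Delta_k$, and the pointwise $L_\infty$-boundedness of the PW factors $\rho^{b-1}\Delta,\rho^{b-2}\Delta_k,\rho^{B_p-1}\Delta_p^D$. The bound on $\hat\Delta_p^N$ follows by the same argument applied to the second identity, with the already-obtained bound on $\hat\Delta_p^D$ inserted on the right-hand side. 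The hardest step is the construction of the sampling sequence: one must quantify how rigidly the real zeros of $\rho^{b-B_p}\tilde A$ stay near those of the limiting trigonometric polynomial as $\tilde\sigma$ varies in $\mathcal B_\Omega$, so that a single shift of an arithmetic grid keeps $|\tilde A(\rho_n)|$ bounded below by a constant depending only on $\Omega$ and the edge lengths.
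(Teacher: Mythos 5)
Your algebraic preparation is essentially the paper's own: your two identities $A\Delta_p^D=\vv_k(T_k,\la)\Delta-S_k(T_k,\la)\Delta_k$ and $A\Delta_p^N+B\Delta_p^D=S_k^{[1]}(T_k,\la)\Delta_k-\vv_k^{[1]}(T_k,\la)\Delta$ (with your $A,B$ equal to the paper's $\Delta^D,\Delta^K$ on $g_p^*$) are exactly the system \eqref{sys} inverted via Cramer's rule combined with Lemma~\ref{lem:E}, i.e. with the Wronskian identity $W_k\equiv-1$; and your observation that $\rho^{B_p-1}\hat\Delta_p^D(\rho^2)$, $\rho^{B_p-2}\hat\Delta_p^N(\rho^2)$ are Paley--Wiener of type $\mathrm{length}(G_p)$ is \eqref{asymptDp}. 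The divergence, and the gap, is in how you control the division by $\tilde A$.

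The step you yourself flag as hardest does not work uniformly on $\mathcal B_\Omega$: you need a \emph{fixed} separated real grid $\{\rho_n\}$ with $|\rho_n^{\,b_p-1}\tilde A(\rho_n)|\ge c(\Omega)$ for \emph{all} $\tilde\sigma\in\mathcal B_\Omega$, but Corollary~\ref{cor:uniD} only gives $\|\kappa\|_{L_2(\mathbb R)}\le C\Omega$ for the PW remainder, hence a sup-norm bound of order $\Omega$ on the real axis --- bounded, not small --- and its decay at real infinity (Riemann--Lebesgue) is not uniform on an $L_2$-ball. Consequently the near-real zeros of $\tilde A$ are \emph{not} confined to fixed neighbourhoods of the zeros of the limiting trigonometric polynomial uniformly in $\tilde\sigma\in\mathcal B_\Omega$: finitely many low-lying zeros can move by amounts comparable to $\Omega$, and for any fixed grid point one can find $\tilde\sigma\in\mathcal B_\Omega$ making $|\tilde A|$ there arbitrarily small; moreover Lemma~\ref{lem:Delta0} asserts only that the zeros of $P_{m,\mathbf T}$ are real, not that they are separated (on a tree they may cluster or be multiple), so ``shifting an arithmetic grid away from them'' is not even well defined uniformly. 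Your argument therefore yields at best a constant depending on $\sigma,\tilde\sigma$ --- enough for the local Theorem~\ref{thm:locex}, not for Theorem~\ref{thm:uniaux}. The paper avoids the issue entirely: by Corollary~\ref{cor:reg} all $\rho$-zeros of $\tilde\Delta^D$ lie in a horizontal strip $|\mbox{Im}\,\rho|\le C(\Omega)$, so on the shifted line $\ga=\{s+i\tau\}$ of \eqref{cont} with $\tau=\tau(\Omega)$ large one has the two-sided bound $\tilde A(\rho^2)\asymp\rho^{1-b_p}$ uniformly on the ball (cf. \eqref{estDelD}, \eqref{estE}); the difference identity is then estimated in $L_2(\ga)$ and carried back to $L_2(\mathbb R)$ by the PW norm equivalence \eqref{eqPW}, with no sampling or frame inequality needed. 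Replacing your real grid by estimates on $\ga$ (or, if you want sampling, by points on that shifted line, as the paper does in \eqref{defnu} for the algorithm) repairs the proof; the rest of your estimates for the individual terms via Lemma~\ref{lem:uniSC}, Corollary~\ref{cor:uniD}, \eqref{estDk} and \eqref{defdek} are sound.
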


Theorems~\ref{thm:unibound} and~\ref{thm:uniaux} are proved in Sections~\ref{sec:bound} and~\ref{sec:return}, respectively. Note that, using the resulting functions $\Delta_p^D(\la)$ and $\Delta_p^N(\la)$ of Auxiliary Problem~\ref{ap:aux}, one can construct the potential $\sigma_p$ by solving Inverse Problem~\ref{ip:loc} for the subtree $G_p$. In this way, starting from the boundary edges, one can step-by-step recover the potentials on all the edges of the tree $G$. Consequently, Theorems~\ref{thm:unibound} and \ref{thm:uniaux} directly imply Theorem~\ref{thm:uni}.

In addition, we consider the question about local solvability and stability of Inverse Problem~\ref{ip:main}:

\begin{quest}
Let $\tilde \Delta(\la)$ and $\tilde \Delta_k(\la)$ for all $v_k \in \partial G'$ be obtained from the characteristic functions $\Delta(\la)$ and $\Delta_k(\la)$ corresponding to some potential $\sigma \in L_2(G)$ by a small perturbation in the sense of the norm \eqref{defde}. Does the solution $\{ \tilde \sigma_j \}_1^m$ of Inverse Problem~\ref{ip:main} for the initial data $\tilde \Delta(\la)$ and $\tilde \Delta_k(\la)$ $(v_k \in \partial G')$ exist? If yes, is it stable in the sense of the estimates \eqref{estsi}?
\end{quest}

The answer to the existence question in general is negative, since Inverse Problem~\ref{ip:main} is overdetermined. Nevertheless, in this paper, we develop a theoretical algorithm for reconstruction (Algorithm~\ref{alg:main}), which for every fixed $\sigma \in L_2(G)$ and any sufficiently small perturbation of the spectral data of $\sigma$ generates the unique approximation $\tilde \sigma \in L_2(G)$ satisfying \eqref{estsi}. Consequently, we establish the local stability of Inverse Problem~\ref{ip:main} under the existence assumption for its solution:

\begin{thm} \label{thm:locex}
Let $\sigma = \{ \sigma_j \}_1^m$ be a fixed potential in $L_2(G)$. Then, there exists $\eps > 0$ (depending on $\sigma$) such that, for any $\tilde \sigma = \{ \tilde \sigma_j \}_1^m \in L_2(G)$ satisfying the condition $\de < \eps$, where $\de$ is defined in \eqref{defde}, the estimate \eqref{estsi} holds with the constant $C$ depending only on $\sigma$.
\end{thm}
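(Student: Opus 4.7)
The plan is to reduce Theorem~\ref{thm:locex} to the uniform stability Theorem~\ref{thm:uni}. Set $\Omega := \|\sigma\|_{L_2(G)} + 1$, so $\sigma \in \mathcal{B}_\Omega$, and let $C_0 = C_0(\Omega)$ denote the stability constant in Theorem~\ref{thm:uni}; note that $C_0$ depends only on $\sigma$. Since Theorem~\ref{thm:uni} applies whenever both potentials lie in $\mathcal{B}_\Omega$, it suffices to exhibit $\eps = \eps(\sigma) > 0$ such that $\de < \eps$ forces $\tilde\sigma \in \mathcal{B}_\Omega$; the estimate \eqref{estsi} then follows at once.

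To establish this a priori bound on $\|\tilde\sigma\|_{L_2(G)}$, I would run the constructive reconstruction Algorithm~\ref{alg:main} of Section~\ref{sec:alg} on the perturbed data $(\tilde\Delta, \{\tilde\Delta_k\}_{v_k\in\partial G'})$, denote its output by $\bar\sigma$, and show by induction that $\bar\sigma$ is $O(\de)$-close to $\sigma$. The algorithm is recursive: at each stage it picks a currently-boundary edge $e_k$, recovers $\bar\sigma_k$ by the formula that solves Inverse Problem~\ref{ip:loc}, and then uses the construction of Auxiliary Problem~\ref{ap:aux} to compute the characteristic functions on the reduced subtree $G_p$. The induction is on the number of processed edges, with hypothesis that (i) $\|\sigma_j - \bar\sigma_j\|_{L_2(0,T_j)} \le C_j \de$ for every already-processed edge $e_j$, so that $\bar\sigma$ remains in $\mathcal{B}_\Omega$, and (ii) the relevant $\de$-type norm of the current characteristic functions on the remaining subtree is bounded by $C'\de$. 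The inductive step combines Theorem~\ref{thm:unibound} (bounding the new edge's error in terms of the current local $\de_k$) with Theorem~\ref{thm:uniaux} (propagating the bound to the characteristic functions of $G_p$); since both $\sigma$ and $\bar\sigma$ lie in $\mathcal{B}_\Omega$, the constants furnished by these theorems depend only on $\Omega$, hence only on $\sigma$. Choosing $\eps$ small enough that $\sum_j C_j \de < \tfrac12$ closes the induction after the finitely many stages, giving $\bar\sigma \in \mathcal{B}_\Omega$ together with $\|\sigma_j - \bar\sigma_j\|_{L_2(0,T_j)} \le C(\sigma)\de$ for every $j$.

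Since the algorithm recovers the true potential exactly when applied to consistent spectral data, running it on the characteristic functions of $\tilde\sigma$ returns $\bar\sigma = \tilde\sigma$; this can equivalently be seen from the uniqueness of solution of Inverse Problem~\ref{ip:main}. Consequently $\tilde\sigma = \bar\sigma \in \mathcal{B}_\Omega$ and the bound \eqref{estsi} holds. The main obstacle will be the bootstrapping in the second step: one must verify that at every stage of the induction the algorithm's partial output stays inside $\mathcal{B}_\Omega$, so that the Lipschitz constants in Theorems~\ref{thm:unibound} and~\ref{thm:uniaux} remain $\sigma$-dependent rather than $\tilde\sigma$-dependent. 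Because the tree has only finitely many edges, a single $\eps = \eps(\sigma)$ handles all accumulated errors and closes the induction cleanly.
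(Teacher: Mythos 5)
There is a genuine gap: your inductive step is circular. Theorems~\ref{thm:unibound} and~\ref{thm:uniaux} are two-potential statements whose constants are available only after \emph{both} potentials are known to lie in $\mathcal B_{\Omega}$, and "both potentials" means the potentials on the whole tree, not just on the already-processed edges: the proofs of those theorems use $\tilde \Delta(\rho^2) \asymp \rho^{1-b}$ on $\ga$ and the uniform boundedness of $\tilde\psi_k(x,\cdot)$ (see \eqref{estDk}, \eqref{estMk}, \eqref{defvarka}), and these are extracted from $\tilde\sigma \in \mathcal B_{\Omega}$, which is exactly what you are trying to prove. Your induction hypothesis only controls $\hat\sigma_j$ on processed edges; the components of $\tilde\sigma = \bar\sigma$ on unprocessed edges are never bounded, so even the very first application of Theorem~\ref{thm:unibound} (to the first boundary edge) cannot be made with a constant depending only on $\sigma$. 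Choosing $\eps$ with $\sum_j C_j \de < \tfrac12$ does not repair this, because the constants $C_j$ themselves presuppose the global ball bound. (The smallness of $\de$ in \eqref{defde} is a statement about differences of characteristic functions; the paper provides only the forward Lipschitz bound of Corollary~\ref{cor:uniD}, not an a priori inverse bound $\|\tilde\sigma\|_{L_2(G)} \le \Omega$ from $\de$ small.) Note also that once you have shown the algorithm's output is $O(\de)$-close to $\sigma$ and equals $\tilde\sigma$, the estimate \eqref{estsi} is already proved, so the detour through Theorem~\ref{thm:uni} is redundant; the entire burden sits on the induction, which is where the circularity lives.

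The paper resolves precisely this difficulty by not invoking the two-potential theorems on the pair $(\sigma,\tilde\sigma)$. Instead it proves Theorem~\ref{thm:loc}: Algorithm~\ref{alg:main} is run on the perturbed data, and every quantity appearing in its steps is built from the data and from fixed models (the known potential $\sigma$ and the zero potential $\sigma^0$), never from the unknown potential. Concretely, the main-equation operator and right-hand side are perturbed as in \eqref{difHk0} and \eqref{difFk}, the invertibility of $I + \tilde H_k^0(x)$ follows from that of $I + H_k^0(x)$ for $\de$ small, the solution satisfies \eqref{difpsik}, and the potential on the edge is then produced by \eqref{recsi0}; the transition through a vertex is handled by the sampling formulas \eqref{findkap}--\eqref{expF} with the $A$, $A_1$, $A_2$ built from already-recovered data. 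All constants there depend only on $\sigma$, so no a priori bound on $\tilde\sigma$ is required. Theorem~\ref{thm:locex} then follows immediately because, when the perturbed data are the genuine characteristic functions of some $\tilde\sigma \in L_2(G)$, the algorithm's uniquely determined output coincides with $\tilde\sigma$. If you want to salvage your argument, you must replace the appeals to Theorems~\ref{thm:unibound} and~\ref{thm:uniaux} by this kind of data-only perturbation analysis of the reconstruction steps.
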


Note that, for each fixed $\sigma \in L_2(G)$ and every $\eps > 0$, there exist infinitely many functions $\tilde \sigma$ satisfying the condition $\de < \eps$ of Theorem~\ref{thm:locex}. This follows from the continuity of the characteristic functions $\Delta(\la)$ and $\Delta_k(\la)$ with respect to $\sigma$ (see Corollary~\ref{cor:uniD}). Taking a small perturbation of $\sigma$, one can make $\de$ arbitrarily small.

\section{Stability on a boundary edge} \label{sec:bound}

In this section, we consider Inverse Problem~\ref{ip:loc} of the recovery $\sigma_k$ on a boundary edge $e_k$.
Introduce the Weyl function with respect to the vertex $v_k$ according to \cite{Yur05}:
\begin{equation} \label{defM}
M_k(\la) := -\frac{\Delta_k(\la)}{\Delta(\la)}.
\end{equation}

The uniqueness of recovering the potential $\sigma_k$ from the Weyl function $M_k(\la)$ is obtained similarly to \cite[Theorem~2]{FIY08}.
The goal of this section is to prove Theorem~\ref{thm:unibound} on the uniform stability. For this purpose, we derive the new reconstruction formula \eqref{recsi}. The novelty of our case consists in the following two features:
\begin{enumerate}
\item Behavior of the spectrum for the Sturm-Liouville operator on a graph is complicated. Eigenvalues can be multiple and/or not asymptotically separated. For simple structure of the spectrum, it is convenient to calculate the contour integrals using the Residue Theorem and then operate with infinite series (see, e.g., \cite{Bond21}). In our case, we only work with contour integrals.
\item The potentials $q_j$ in the Schr\"odinger-form equation \eqref{schr} are distributional, which influence the convergence of the contour integrals. In order to obtain the desired reconstruction formula, we approximate $q_j$ by integrable potentials.
\end{enumerate}

Consider two problems $\mathcal L$ and $\tilde{\mathcal L}$ on the same three $G$ with potentials $\sigma = \{ \sigma_j \}_1^m$ and $\tilde{\sigma} = \{ \tilde \sigma_j \}_1^m$ of $L_2(G)$, respectively. 

For a fixed $\tau > 0$, introduce the contours
\begin{equation} \label{cont}
\ga := \{ \rho = s + i \tau \colon -\infty < s < \infty \}, \quad
\Gamma := \{ \la = \rho^2 \colon \rho \in \gamma \}
\end{equation}
with the circuit corresponding to $s$ going from $+\infty$ to $-\infty$.
In view of Corollary~\ref{cor:reg},
one can choose $\tau > 0$ so large that all the zeros of $\Delta(\la)$ and $\tilde \Delta(\la)$ lie inside $\Gamma$ (see Fig.~\ref{fig:Gamma}).

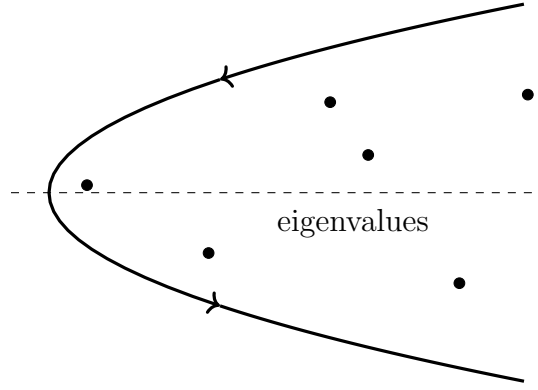
\begin{figure}[h!]
\centering
\begin{tikzpicture}
  \draw[thin,dashed] (-0.5, 0) edge (6.5, 0);
  \draw[very thick,domain=-2.5:-1.5] plot (\x*\x,\x);
  \draw[very thick,domain=-1.5:1.5,<-] plot (\x*\x,\x);
  \draw[very thick,domain=1.5:2.5,<-] plot (\x*\x,\x);
  \filldraw (0.5, 0.1) circle(2pt);
  \filldraw (2.1, -0.8) circle(2pt);
  \filldraw (3.7, 1.2) circle(2pt);
  \filldraw (4.2, 0.5) circle (2pt);
  \filldraw (5.4, -1.2) circle (2pt);
  \filldraw (6.3, 1.3) circle (2pt);
  \draw (4,-0.4) node{eigenvalues};
\end{tikzpicture}
\caption{Contour $\Gamma$}
\label{fig:Gamma}
\end{figure}

Let us derive the relation for the difference $\hat \sigma_k(x)$. Begin from the case 
\begin{equation} \label{siW21}
\sigma_j \in \mathring{W}_2^1[0,T_j], \quad j = \overline{1,m},
\end{equation}
where 
$$
\mathring{W}_2^1[0,T] = \bigl\{ \sigma \in AC[0,T] \colon \sigma' \in L_2(0,T), \, \sigma(0) = \sigma(T) = 0 \bigr\}.
$$
Then, equations \eqref{eqv} can be represented in the form \eqref{schr} with
\begin{equation} \label{qL2}
q_j = \sigma_j' \in L_2(0,T_j), \quad \int_0^{T_j} q_j(x_j) \, dx_j = 0, 
\end{equation}
and $y_j^{[1]}(x_j) = y_j'(x_j)$ at $x_j = 0$ and $x_j = T_j$, $j = \overline{1,m}$. Thus, the boundary value problem $\mathcal L$ turns into the Sturm-Liouville problem with regular potentials and the standard matching conditions, which has been studied in \cite{Yur05}.
In particular, by using the method of spectral mappings (see \cite{Yur02, Yur05}), the following relation has been obtained:
$$
\hat q_k(x) = \frac{1}{2 \pi i} \int_{\Gamma} \frac{d}{dx}\bigl(S_k(x,\la) \tilde S_k(x,\la)\bigr) \hat M_k(\la) \, d\la.
$$

Integration implies
\begin{equation} \label{difsi1}
\hat \sigma_k(x) = \frac{1}{2 \pi i} \int_{\Gamma} S_k(x,\la) \tilde S_k(x,\la) \hat M_k(\la) \, d\la,
\end{equation}
where the constant of integration is chosen according to the condition $\sigma_k(0) = \tilde \sigma_k(0) = 0$. Note that, for the potentials $\{ q_j \}_1^m$ and $\{ \tilde q_j \}_1^m$ satisfying the conditions \eqref{qL2}, there hold
$$
|S_k(x,\rho^2)|, \, |\tilde S_k(x,\rho^2)| \le C |\rho|^{-1}, \quad \rho \in \ga
$$
and $\hat M_k(\rho^2) \in L_2(\ga)$ (see \cite[Section~4]{Bond25}). Consequently, the integral
\begin{equation} \label{intga}
\int_{\Gamma} S_k(x,\la) \tilde S_k(x,\la) \hat M_k(\la) \, d\la = 2 \int_{\ga} \rho S_k(x, \rho^2) \tilde S_k(x,\rho^2) \hat M_k(\rho^2) d \rho
\end{equation}
converges absolutely and uniformly by $x \in [0,T_k]$. 

\begin{lem} \label{lem:int0}
For $\{ q_j \}_1^m$ and $\{ \tilde q_j \}_1^m$ satisfying \eqref{qL2}, there holds:
\begin{equation} \label{intMk}
\int_{\Gamma} \frac{\hat M_k(\la)}{\la} \, d\la = 0.
\end{equation}
\end{lem}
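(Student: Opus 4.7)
The plan is to prove the identity by a contour-shift argument: since the integrand is holomorphic outside $\Gamma$, I replace $\Gamma$ by a family of parabolas $\Gamma_{\tau'}$ moving to infinity, and then establish vanishing in the limit $\tau'\to\infty$. First I verify that $\hat M_k(\la)/\la$ is holomorphic in the exterior of $\Gamma$: the functions $M_k$ and $\tilde M_k$ are meromorphic with poles only at the zeros of $\Delta$ and $\tilde\Delta$, and by Corollary~\ref{cor:reg} and the choice of $\tau$ all of these, together with the pole $\la=0$ of $1/\la$, lie inside $\Gamma$. Applying Cauchy's theorem to the annular region enclosed between $\Gamma$ and $\Gamma_{\tau'}$ (defined as in \eqref{cont} with $\tau$ replaced by any $\tau'\ge\tau$) gives
$$
\int_\Gamma \frac{\hat M_k(\la)}{\la}\,d\la = \int_{\Gamma_{\tau'}} \frac{\hat M_k(\la)}{\la}\,d\la.
$$

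Next I would show that the right-hand side tends to $0$ as $\tau'\to\infty$. The key point is that both $M_k(\la)$ and $\tilde M_k(\la)$ share the same leading asymptotic as $|\la|\to\infty$, determined by the edge lengths and boundary conditions alone, so this leading term cancels in the difference $\hat M_k$. By Corollary~\ref{cor:asymptD} we have $\Delta(\rho^2) = \Delta^0(\rho^2) + \rho^{1-b}\kappa(\rho)$ and $\Delta_k(\rho^2) = \Delta_k^0(\rho^2) + \rho^{2-b}\kappa_k(\rho)$ with $\kappa,\kappa_k\in PW(\mathbf{T})$, and analogous expansions hold for $\tilde\sigma$. Starting from the identity
$$
\hat M_k = \frac{\Delta_k\,\hat\Delta - \Delta\,\hat\Delta_k}{\Delta\,\tilde\Delta},
$$
I combine (i) the lower bound $|\Delta(\rho^2)|,|\tilde\Delta(\rho^2)|\ge c|\rho|^{1-b}e^{\tau'\mathbf{T}}$ on $\gamma_{\tau'}$ for $\tau'$ large (from Lemma~\ref{lem:Delta0}(ii)), (ii) the pointwise Paley--Wiener bound $|\kappa(s+i\tau')|\le Ce^{\tau'\mathbf{T}}/\sqrt{\tau'}$ (and its analogues for $\kappa_k,\tilde\kappa,\tilde\kappa_k$), and (iii) the refined asymptotics provided by the zero-mean condition \eqref{qL2}, to construct an $L_1$-majorant for $\hat M_k(\rho^2)/\rho$ on $\gamma_{\tau'}$ whose total mass vanishes as $\tau'\to\infty$. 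Cauchy--Schwarz with the weight $1/(s+i\tau')$ (together with $\|1/(\cdot+i\tau')\|_{L_2(\mathbb R)} = \sqrt{\pi/\tau'}$) is the natural tool here.

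The principal obstacle is the quantitative asymptotic analysis: the exponential factors $e^{\tau'\mathbf{T}}$ in the numerator and denominator of $\hat M_k$ cancel only up to subleading corrections, so obtaining an $L_1$-majorant that vanishes as $\tau'\to\infty$ requires carefully combining pointwise bounds (which produce the exponential cancellation) with $L_2$-type bounds (which ensure integrability in $s$). The boundary condition $\sigma_j(0)=\sigma_j(T_j)=0$, equivalent to the zero-mean condition $\int_0^{T_j} q_j\,dx_j = 0$ in \eqref{qL2}, is exactly what produces the extra $1/\rho$ factor in the asymptotics of the Paley--Wiener remainders that makes the estimate close.
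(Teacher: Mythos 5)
Your overall strategy (push the contour to infinity instead of closing it) is different from the paper's and is not implausible, but as written it has two genuine gaps, and the second one concerns precisely the hardest estimate. The paper instead truncates $\Gamma$ by large circles $\mathcal C_N$ chosen as in \eqref{defcont}--\eqref{Dbelow}, uses the refined asymptotics \eqref{asymptDreg} (a consequence of \eqref{qL2}) to get $\sup_{\la \in \mathcal C_N} |\hat M_k(\la)| \to 0$, observes that the integrals of $\hat M_k(\la)/\la$ over $c_N$ and $\mathcal C_N$ then vanish because $\mbox{length}(\mathcal C_N)/R_N$ is bounded, and finishes with the Residue Theorem equating $\oint_{\Gamma_N \cup c_N}$ with $\oint_{\mathcal C_N}$. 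The point of that route is that only a sup-norm smallness of $\hat M_k$ on compact circles is needed; no integrability over an infinite contour ever enters.

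First gap: the deformation step. You apply ``Cauchy's theorem to the annular region between $\Gamma$ and $\Gamma_{\tau'}$'', but both contours are infinite and the region between them is unbounded, so the identity $\int_\Gamma = \int_{\Gamma_{\tau'}}$ is not automatic; to justify it you must truncate at radius $R$, connect the two parabolas by arcs, and show the arc contributions vanish as $R \to \infty$ -- which requires bounds on $\hat M_k$ on large arcs of exactly the type the paper's proof is built on (also note that Lemma~\ref{lem:Delta0}(ii) is stated only in the sectors $\epsilon \le \pm\arg\rho \le \pi - \epsilon$, so it does not by itself give the lower bound you cite on the horizontal lines $\mbox{Im}\,\rho = \tau'$). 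Second and more serious gap: the limit $\tau' \to \infty$ does not close with the asymptotics you invoke. Corollary~\ref{cor:asymptD} gives $\hat\Delta = \rho^{1-b}\hat\kappa$, $\hat\Delta_k = \rho^{2-b}\hat\kappa_k$ with $\hat\kappa, \hat\kappa_k \in PW(\mathbf T)$, and feeding this into $\hat M_k = (\tilde\Delta_k \hat\Delta - \hat\Delta_k \tilde\Delta)/(\Delta\tilde\Delta)$ together with the lower bound $|\Delta|, |\tilde\Delta| \ge c|\rho|^{1-b} e^{\tau' \mathbf T}$ yields only $|\hat M_k(\rho^2)| \le C|\rho|\,(|\hat\kappa(\rho)| + |\hat\kappa_k(\rho)|)\, e^{-\tau' \mathbf T}$ on $\gamma_{\tau'}$; the extra factor $|\rho|$ destroys both an $L_1$-majorant for $\hat M_k(\rho^2)/\rho$ and the Cauchy--Schwarz step with $\|1/(\cdot + i\tau')\|_{L_2} = \sqrt{\pi/\tau'}$, since $\rho\,\hat\kappa(\rho)$ is not square integrable on the line. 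You correctly sense that the zero-mean condition \eqref{qL2} must supply an extra $1/\rho$ in the remainders, but that refined representation (a Paley--Wiener-type remainder one order lower, giving $\|\hat M_k\|_{L_2(\gamma_{\tau'})} \le C$ uniformly in $\tau'$) is exactly what your argument hinges on, and it is neither proved in your sketch nor available in the paper: \eqref{asymptDreg} provides only a uniform $o(\cdot)$ bound, which suffices for the paper's sup-norm estimate on circles but not for the integrable or $L_2$ decay along infinite lines that your scheme requires. Until that refined remainder estimate is established, the central limit $\int_{\Gamma_{\tau'}} \hat M_k(\la)\la^{-1}\, d\la \to 0$ remains unproven.
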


\begin{proof}
Introduce the contours (see Fig.~\ref{fig:cont}):
\begin{equation} \label{defcont}
\mathcal C_N = \{ \la \in \mathbb C \colon |\la| = R_N \}, \quad c_N = \mathcal C_N \cap \mbox{int} \, \Gamma, \quad \Gamma_N := \Gamma \cap \mbox{int} \, \mathcal C_N,
\end{equation}
where the radii $\{ R_N \}$ are chosen so that $R_N \to +\infty$ as $N \to \infty$ and
\begin{equation} \label{Dbelow}
|\Delta(\la)|, \, |\tilde \Delta(\la)| \ge c |\rho|^{1-b} \exp(|\mbox{Im} \, \rho| \mathbf{T}), \quad c > 0, \quad b = |\partial G|, \quad \la = \rho^2 \in \mathcal C_N.
\end{equation}

\begin{figure}[h!]
\centering
\begin{tikzpicture}[scale = 0.7]
  \draw[very thick,domain=-2.5:-1.5] plot (\x*\x,\x);
  \draw[very thick,domain=-1.5:1.5,<-] plot (\x*\x,\x);
  \draw[very thick,domain=1.5:2.5,<-] plot (\x*\x,\x);
  \filldraw (0.5, 0.1) circle(2pt);
  \filldraw (2.1, -0.8) circle(2pt);
  \filldraw (3.7, 1.2) circle(2pt);
  \filldraw (4.2, 0.5) circle (2pt);
  \filldraw (5.4, -1.2) circle (2pt);
  \filldraw (6.3, 1.3) circle (2pt);
  \draw[very thick,->] (-2.5, 0) arc (-180:180:3.5cm); 
  \draw[very thick,->] (-2.5, 0) arc (180:360:3.5cm); 
  \draw (-0.2, -0.8) node{$\Gamma_N$};
  \draw (5, 0.2) node{$c_N$};
  \draw (-1.8, 0.3) node{$\mathcal C_N$}; 
\end{tikzpicture}
\caption{Contours $\mathcal C_N$, $c_N$, and $\Gamma_N$}
\label{fig:cont}
\end{figure}
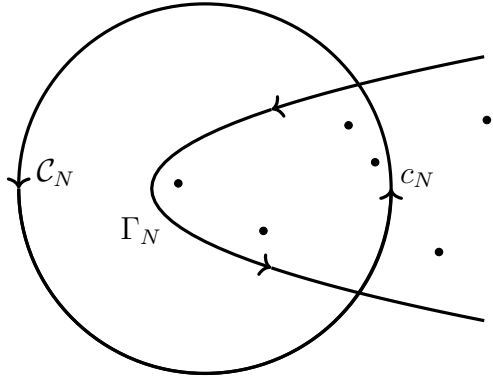

Taking the condition \eqref{qL2} into account, similarly to Corollary~\ref{cor:asymptD}, we obtain the asymptotics
\begin{equation} \label{asymptDreg}
\begin{cases}
\Delta(\rho^2) = \Delta^0(\rho^2) + o\bigl(\rho^{-b} \exp(|\mbox{Im} \, \rho| \mathbf{T})\bigr), \\
\Delta_k(\rho^2) = \Delta_k^0(\rho^2) + o\bigl(\rho^{1-b} \exp(|\mbox{Im} \, \rho| \mathbf{T})\bigr), 
\end{cases} \quad |\rho| \to \infty.
\end{equation}
Combining \eqref{defM}, \eqref{Dbelow}, and \eqref{asymptDreg} implies
$$
\lim_{N \to \infty} \sup_{\la \in \mathcal C_N} |\hat M_k(\la)| = 0.
$$
Hence
\begin{equation} \label{intCN}
\lim_{N \to \infty} \int_{c_N} \frac{\hat M_k(\la)}{\la} \, d\la = 0, \quad \lim_{N \to \infty} \oint_{\mathcal C_N} \frac{\hat M_k(\la)}{\la} \, d\la = 0.
\end{equation}
Consequently
$$
\int_{\Gamma} \frac{\hat M_k(\la)}{\la} \, d\la = \lim_{N \to \infty} \int_{\Gamma_N} \frac{\hat M_k(\la)}{\la} \, d\la = \lim_{N \to \infty} \oint_{\Gamma_N \cup c_N} \frac{\hat M_k(\la)}{\la} \, d\la.
$$
In view of the Residue Theorem, we have
$$
\oint_{\Gamma_N \cup c_N} \frac{\hat M_k(\la)}{\la} \, d\la = \oint_{\mathcal C_N} \frac{\hat M_k(\la)}{\la} \, d\la.
$$
This together with  \eqref{intCN} conclude the proof.
\end{proof}

Combining \eqref{difsi1} and \eqref{intMk}, we arrive at the relation
\begin{equation} \label{recsi}
\hat \sigma_k(x) = \frac{1}{2\pi i} \int_{\Gamma} \left( S_k(x, \la) \tilde S_k(x, \la) - \frac{1}{2 \la} \right) \hat M_k(\la) \, d\la
\end{equation}
with the absolutely convergent integral in the case of $\sigma$ and $\tilde \sigma$ satisfying \eqref{siW21}.

\begin{lem} \label{lem:rec}
The relation \eqref{recsi} is valid for any $\sigma$ and $\tilde \sigma$ in $L_2(G)$ with an appropriately chosen contour $\Gamma$.
\end{lem}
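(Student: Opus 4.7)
The plan is to extend the already established formula \eqref{recsi} from the smoother class \eqref{siW21}, where it holds with an absolutely convergent integral, to all $\sigma, \tilde\sigma \in L_2(G)$ by density. Fix $\Omega > \max(\|\sigma\|_{L_2(G)}, \|\tilde\sigma\|_{L_2(G)})$ and select approximating sequences $\{\sigma^s\}, \{\tilde\sigma^s\} \subset \mathcal B_{\Omega}$ whose components lie in $\mathring W_2^1[0,T_j]$ (possible since $C_c^\infty(0,T_j)$ is dense in $L_2(0,T_j)$) and converge in $L_2(G)$ to $\sigma$ and $\tilde\sigma$. Combining Corollary~\ref{cor:reg}, Lemma~\ref{lem:Delta0}, and Corollary~\ref{cor:asymptD}, a single value of $\tau$ depending only on $\Omega$ can be chosen so that the resulting contour $\Gamma$ encloses all the zeros of $\Delta^s, \tilde\Delta^s, \Delta, \tilde\Delta, \Delta_k^s, \tilde\Delta_k^s, \Delta_k, \tilde\Delta_k$, and the uniform two-sided estimate $|\Delta^s(\rho^2)|, |\tilde\Delta^s(\rho^2)|, |\Delta(\rho^2)|, |\tilde\Delta(\rho^2)| \asymp |\rho|^{1-b}\exp(\tau \mathbf{T})$ holds on $\gamma$. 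For each $s$ the identity \eqref{recsi} already holds with an absolutely convergent integral, and $\hat\sigma_k^s(x) \to \hat\sigma_k(x)$ in $L_2(0, T_k)$ as $s\to\infty$ (hence a.e.\ along a subsequence).

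Next, I control the right-hand side. Lemma~\ref{lem:uniSC} yields $\rho S_k^s(x, \rho^2) \to \rho S_k(x, \rho^2)$ and $\rho\tilde S_k^s \to \rho\tilde S_k$ in $L_2(\gamma)$, together with uniform $L_\infty(\gamma)$ bounds depending only on $\Omega$, $x$, and $\tau$. Corollary~\ref{cor:uniD}, applied to both $\Delta$ and $\Delta_k$ and combined with the algebraic identity $\hat M_k = (\Delta_k \hat\Delta - \Delta \hat\Delta_k)/(\Delta\tilde\Delta)$ and the uniform lower bounds on $|\Delta|, |\tilde\Delta|$, yields $\hat M_k^s/\rho \to \hat M_k/\rho$ in $L_2(\gamma)$.

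The main obstacle is the convergence of the contour integral itself: for generic $\sigma \in L_2(G)$ the integrand in \eqref{recsi} is bounded on $\gamma$ but not in $L_1(\gamma)$, because $|S_k\tilde S_k - 1/(2\la)| = O(|\rho|^{-2})$ on $\gamma$ whereas $|\hat M_k|$ may grow like $|\rho|$. To handle this, I interpret the integral as the improper limit $\lim_{R \to \infty}\int_{\Gamma \cap \{|\rho|\le R\}}$ and decompose the kernel $\rho^{-1}(\rho^2 S_k \tilde S_k - \tfrac{1}{2})$ via Proposition~\ref{prop:SC} into the explicit elementary term $-\cos(2\rho x)/(2\rho)$ plus a Paley-Wiener-class remainder that is bounded and in $L_2(\gamma)$ uniformly in $s$. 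The remainder pairs with $\hat M_k^s/\rho \in L_2(\gamma)$ by Cauchy-Schwarz, giving an absolutely convergent integral that passes to the limit $s \to \infty$ directly. The oscillatory elementary part is treated via the Paley-Wiener representations of $\hat\kappa^s = \rho^{b-1}\hat\Delta^s$ and $\rho^{b-2}\hat\Delta_k^s$ supplied by Corollary~\ref{cor:asymptD}: Plancherel identifies the relevant truncated pairings with values of Fourier transforms near $t = \pm 2x$, which depend continuously on the $L_2(G)$-data and thus converge as $s \to \infty$ uniformly in the truncation parameter $R$. Combining both contributions identifies the limit of the right-hand sides with the right-hand side of \eqref{recsi} interpreted in the improper sense, completing the proof.
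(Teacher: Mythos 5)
Your proposal is correct and follows essentially the same route as the paper: approximate $\sigma,\tilde\sigma$ by potentials of class \eqref{siW21}, fix one contour via Corollary~\ref{cor:reg}, use Lemma~\ref{lem:uniSC} and Corollary~\ref{cor:uniD} to get $\rho S_k^s\to\rho S_k$ and $\rho^{-1}\hat M_k^s\to\rho^{-1}\hat M_k$ in $L_2(\gamma)$, and split the kernel via Proposition~\ref{prop:SC} into the oscillatory term $\cos 2\rho x$ plus an $L_2(\gamma)$ remainder, exactly as in the paper's decomposition into $\mathcal I_1$ and $\mathcal I_2$. Your reading of the non-absolutely-convergent part as an improper (truncated) limit handled by Plancherel is just the paper's interpretation of $\mathcal I_1$ as the $L_2$-Fourier transform of $\rho^{-1}\hat M_k(\rho^2)$, so no substantive difference remains.
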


\begin{proof}
First, let us study the convergence of the integral in \eqref{recsi}. In view of Corollary~\ref{cor:asymptD}, we get
\begin{gather} \label{asymptDk}
\Delta(\rho^2) = \Delta^0(\rho^2) + \rho^{1-b} \kappa(\rho), \quad
\Delta_k(\rho^2) = \Delta_k^0(\rho^2) + \rho^{2-b} \kappa_k(\rho), \quad \kappa, \kappa_k \in PW(\mathbf{T}), \\ \label{estDk}
\Delta(\rho^2) \asymp \rho^{1-b}, \quad \Delta_k(\rho^2) \asymp \rho^{2-b}, \quad \rho \in \gamma,
\end{gather}
and the similar relations for $\tilde \Delta(\rho^2)$ and $\tilde \Delta_k(\rho^2)$. 
In view of \eqref{PW}, any function of $PW(\mathbf{T})$ belong to $L_2(\ga)$
and
\begin{equation} \label{eqPW}
\| \mathcal F \|_{L_2(\mathbb R)} \asymp \| \mathcal F \|_{L_2(\ga)}, \quad \forall \mathcal F \in PW(\mathbf{T}).
\end{equation}
Therefore, using \eqref{defM}, we conclude that 
\begin{equation} \label{L2ga}
\rho^{-1} \hat M_k(\rho^2) \in L_2(\ga).
\end{equation}

Substituting the relations \eqref{intS} into \eqref{recsi} and taking \eqref{intga} into account, we obtain
\begin{gather} \label{I12}
\frac{1}{2\pi i} \int_{\Gamma} \left( S_k(x, \la) \tilde S_k(x, \la) - \frac{1}{2 \la} \right) \hat M_k(\la) \, d\la = \mathcal I_1(x) + \mathcal I_2(x), \\ \label{defI12}
\mathcal I_1(x) := 
-\frac{1}{2\pi i} \int_{\ga} \cos(2 \rho x) \rho^{-1} \hat M_k(\rho^2) \, d\rho, \quad
 \mathcal I_2(x) := \frac{1}{\pi i}\int_{\ga} \varkappa(x,\rho) \rho^{-1} \hat M_k(\rho^2) \, d\rho, \\ \label{defvarka}
 \varkappa(x,\rho) = ( \psi_k(x, \rho) + \tilde \psi_k(x, \rho)) \sin \rho x + \psi_k(x,\rho) \tilde \psi_k(x,\rho),
\end{gather}
where $\psi_k(x,\rho)$ is the function from the representation \eqref{intS}. Consequently $\varkappa(x,\rho) \in L_2(\ga)$ for each fixed $x \in [0,T_k]$ and the norm $\| \varkappa(x,.) \|_{L_2(\ga)}$ is uniformly bounded by $x$. In view of \eqref{L2ga}, the integral $\mathcal I_1(x)$ is the Fourier transform of an $L_2(\ga)$-function, so $\mathcal I_1 \in L_2(0,T_k)$. The integral $\mathcal I_2$ converges absolutely and uniformly by $x$ to a continuous function on $[0,T_k]$. Thus, the integral in the right-hand side of \eqref{recsi} is correctly defined.

Second, let us prove the relation \eqref{recsi} for the general case by approximation. For any functions $\sigma = \{ \sigma_j \}_1^m$ and $\tilde \sigma = \{ \tilde \sigma_j \}_1^m$ of $L_2(G)$, there exist sequences $\sigma^s = \{ \sigma_j^s \}_1^m$ and $\tilde \sigma^s = \{ \tilde \sigma_j^s \}_1^m$ ($s \ge 1$) of the class \eqref{siW21} such that $\sigma^s \to \sigma$ and $\tilde \sigma^s \to \tilde \sigma$ in $L_2(G)$ as $s \to \infty$. By Corollary~\ref{cor:uniD}, we conclude that $\rho^{b-1} \Delta^s(\rho^2) \to \rho^{b-1} \Delta(\rho^2)$ and $\rho^{b-2} \Delta_k^s(\rho^2) \to \rho^{b-2} \Delta_k(\rho^2)$ in $L_2(\mathbb R)$ (and so in $L_2(\ga)$) as $s \to \infty$. Using \eqref{defM}, we obtain $\rho^{-1}M_k^s(\rho^2) \to \rho^{-1} M_k(\rho^2)$ in $L_2(\ga)$ as $s \to \infty$. Lemma~\ref{lem:uniSC} implies that $\rho S_k^s(x,\rho^2) \to \rho S_k(x,\rho^2)$ in $L_2(\ga)$ as $s \to \infty$ uniformly by $x \in [0,T_k]$. 

Under the conditions \eqref{siW21}, we have already proved the relations
\begin{equation} \label{recsip}
\hat \sigma_k^s = \frac{1}{2\pi i} \int_{\Gamma} \left( S_k^s(x,\la) \tilde S_k^s(x,\la) - \frac{1}{2\la} \right) \hat M_k^s(\la) \, d\la.
\end{equation}
Note that one can choose the same contour $\Gamma$ for all $s \ge 1$, since the sequences $\{ \sigma^s \}_1^{\infty}$ and $\{ \tilde\sigma^s \}_1^{\infty}$ are convergent and so bounded.

Passing to the limit as $s \to \infty$, we get
$$
\frac{1}{2\pi i} \int_{\Gamma} \left( S_k^s(x,\la) \tilde S_k^s(x,\la) - \frac{1}{2\la} \right) \hat M_k^s(\la) \, d\la \to \frac{1}{2\pi i} \int_{\Gamma} \left( S_k(x,\la) \tilde S_k(x,\la) - \frac{1}{2\la} \right) \hat M_k(\la) \, d\la.
$$
More precisely, according to \eqref{I12}, the Fourier transforms $\mathcal I_1^s$ converge to $\mathcal I_1$ in $L_2(0,T_k)$ and the continuous functions $\mathcal I_2^s$ converge to $\mathcal I_2$ uniformly by $x \in [0,T_k]$. Thus \eqref{recsip} implies \eqref{recsi}.
\end{proof}

\begin{proof}[Proof of Theorem~\ref{thm:unibound}]
By virtue of Corollary~\ref{cor:uniD}, the remainders $\kappa(\rho)$ and $\kappa_k(\rho)$ in the asymptotics \eqref{asymptDk} are continuous with respect to $\sigma$ as mappings from $L_2(G)$ to $L_2(\mathbb R)$ and so are bounded for $\sigma \in \mathcal B_{\Omega}$.
Consequently, the estimates \eqref{estDk} hold uniformly by $\sigma \in \mathcal B_{\Omega}$ on the appropriate contour $\ga$. So, using \eqref{defM}, \eqref{estDk}, and \eqref{eqPW}, we obtain
\begin{equation} \label{estMk}
\| \rho^{-1} \hat M_k(\rho^2) \|_{L_2(\ga)} \le C \de_k.
\end{equation}

The estimate \eqref{estMk} immediately yields $\| \mathcal I_1 \|_{L_2(0,T_k)} \le C \de_k$, where $\mathcal I_1$ is defined in \eqref{defI12}. Next, by virtue of Lemma~\ref{lem:uniSC}, the norm $\| \psi_k(x,.) \|_{L_2(\mathbb R)}$ is uniformly bounded for $\| \sigma_k \|_{L_2(0,T_k)} \le \Omega$ and $x \in [0,T_k]$. Consequently, the norm $\| \varkappa(x,.) \|_{L_2(\ga)}$ of the function defined by \eqref{defvarka} is also uniformly bounded. Therefore, using \eqref{defI12}, \eqref{estMk}, and the Cauchy-Bunyakovsky-Schwarz inequality, we conclude that $\| \mathcal I_2 \|_{L_2(0,T_k)} \le C \de_k$. Taking \eqref{recsi} and \eqref{I12} into account, we arrive at the estimate \eqref{unik}.
\end{proof}

\section{Uniform stability of the auxiliary problem} \label{sec:return}

In this section, we study the stability of Auxiliary Problem~\ref{ap:aux}. Our analysis is based on constructing characteristic functions for the subtrees $G_p$ and $g_p$ (see Fig.~\ref{fig:tree-split}) and applying their Lipschitz continuity with respect to the potentials.

Let $v_k \in \partial G$ be fixed, and let $v_p$ be the parent of $v_k$.
For the Sturm-Liouville equations \eqref{eqv} on the subtree $g_p$, introduce the characteristic functions $\Delta^{DD}(\la)$, $\Delta^{DK}(\la)$, $\Delta^{ND}(\la)$, and $\Delta^{NK}(\la)$ according to Definition~\ref{def:char} associated with the following boundary conditions:
\begin{itemize}
\item $\Delta^{DD}(\la)$: the Dirichlet conditions $y_k(v_k) = 0$ and $y_j(v_p) = 0$ for all the ingoing edges $e_j$ to $v_p$;
\item $\Delta^{DK}(\la)$: the Dirichlet condition $y_k(v_k) = 0$ and the matching conditions \eqref{mc} at $v_p$;
\item $\Delta^{ND}(\la)$: the Neumann condition $y_k^{[1]}(v_k) = 0$ and the Dirichlet conditions $y_j(v_p) = 0$ for all the ingoing edges $e_j$ to $v_p$;
\item $\Delta^{NK}(\la)$: the Neumann condition $y_k^{[1]}(v_k) = 0$ and the matching conditions \eqref{mc} at $v_p$;
\end{itemize}
wherein $D$, $N$, and $K$ mean ``Dirichlet'', ``Neumann'', and ``Kirchhoff'', respectively. 
It is assumed that, in the internal vertices $v \in \mbox{int}\,g_p \setminus v_p$, the matching conditions \eqref{mc} and, in the boundary vertices $v \in \partial g_p \setminus \{ v_k, v_p \}$, the Dirichlet conditions $y(v) = 0$ are imposed. Note that, in the case of the Dirichlet conditions at $v_p$, the tree $g_p$ actually splits into several subtrees with the root at $v_p$. 

Using \eqref{split}, we obtain the relations
\begin{equation} \label{sys}
\begin{cases}
\Delta(\la) = \Delta^{DD}(\la) \Delta_p^N(\la) + \Delta^{DK}(\la) \Delta_p^D(\la), \\
\Delta_k(\la) = \Delta^{ND}(\la) \Delta_p^N(\la) + \Delta^{NK}(\la) \Delta_p^D(\la).
\end{cases}
\end{equation}
Cramer's rule implies
\begin{equation} \label{cram}
\Delta_p^N(\la) = \frac{A_1(\la)}{A(\la)}, \quad \Delta_p^D(\la) = \frac{A_2(\la)}{A(\la)},
\end{equation}
where
\begin{align} \label{defE}
& A(\la) := \Delta^{DD}(\la) \Delta^{NK}(\la) - \Delta^{ND}(\la) \Delta^{DK}(\la), \\ \label{defE1}
& A_1(\la) := \Delta(\la) \Delta^{NK}(\la) - \Delta_k(\la) \Delta^{DK}(\la), \\ \label{defE2}
& A_2(\la) := \Delta^{DD}(\la) \Delta_k(\la) - \Delta^{ND}(\la) \Delta(\la).
\end{align}

In the special case of the graph $g_p$ consisting of the only edge $e_k = [v_k, v_p]$, we have
\begin{align*}
& \Delta^{DD}(\la) = S_k(T_k,\la), \quad \Delta^{DK}(\la) = S_k^{[1]}(T_k,\la), \\
& \Delta^{ND}(\la) = \vv_k(T_k,\la), \quad \Delta^{NK}(\la) = \vv_k^{[1]}(T_k,\la), \\
& A(\la) = S_k(T_k,\la) \vv_k^{[1]}(T_k,\la) - S_k^{[1]}(T_k,\la) \vv_k(T_k,\la).
\end{align*}

Using equation \eqref{eqv}, one can easily show that the generalized Wronskian $W_k(x,\la) = (S_k \vv_k^{[1]} - S_k^{[1]} \vv_k)(x,\la)$ does not depend on $x$. The initial conditions \eqref{ic} imply $W_k(0,\la) \equiv -1$, so $A(\la) = W_k(T_k,\la) \equiv -1$. Thus, the case of a single edge in $g_p$ is quite simple. In the following, we focus on the case of $g_p$ containing more than one edge.

Exclude the edge $e_k$ from the graph $g_p$ and denote the resulting graph by $g_p^*$. Let $\Delta^D(\la)$ and $\Delta^K(\la)$ be the characteristic function of the Sturm-Liouville system \eqref{eqv} on the graph $g_p^*$ with the Dirichlet conditions $y_j(v_p) = 0$ for all $e_j = [v_j, v_p]$ and the matching conditions \eqref{mc} at the vertex $v_p$, respectively. We assume that the Dirichlet boundary conditions $y(v) = 0$ are imposed at the other boundary vertices $v \in \partial g_p^* \setminus v_p$ and the matching conditions \eqref{mc} are satisfied at all $v \in \mbox{int} \, g_p^*$.

\begin{lem} \label{lem:E}
There holds $A(\la) = -\bigl( \Delta^D(\la) \bigr)^2$.
\end{lem}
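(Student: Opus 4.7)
The plan is to expand each of the four characteristic functions appearing in $A(\la) = \Delta^{DD}\Delta^{NK} - \Delta^{ND}\Delta^{DK}$ by applying the splitting formula \eqref{split} at the vertex $v_p$, and then verify that the resulting algebraic combination collapses to the Wronskian of $S_k$ and $\vv_k$ multiplied by $(\Delta^D)^2$.

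To set things up, let $r \ge 2$ denote the degree of $v_p$ in $g_p$ (the case $r=1$ is handled separately in the excerpt). Splitting $g_p$ at $v_p$ yields the one-edge subtree $e_k$ together with $r-1$ subtrees $G_1, \ldots, G_{r-1}$ whose union is $g_p^*$. From the one-edge cases of Definition~\ref{def:char} combined with \eqref{ic}, the four possible characteristic functions of $e_k$ (Dirichlet/Neumann at $v_k$ and at $v_p$) are $S_k(T_k,\la)$, $S_k^{[1]}(T_k,\la)$, $\vv_k(T_k,\la)$, $\vv_k^{[1]}(T_k,\la)$. For each $G_j$ let $\Delta_j^D$, $\Delta_j^N$ denote its characteristic functions with Dirichlet/Neumann at $v_p$. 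The key observation I would use is that imposing Dirichlet at $v_p$ on every incoming edge decouples the problem (so the full characteristic function is the product of the subproblem characteristic functions), while Kirchhoff matching at $v_p$ is precisely covered by \eqref{split}. This produces
$$\Delta^D(\la) = \prod_{j=1}^{r-1}\Delta_j^D(\la), \qquad \Delta^K(\la) = \sum_{j=1}^{r-1}\Delta_j^N(\la)\prod_{l\ne j}\Delta_l^D(\la),$$
and the four identities
\begin{align*}
\Delta^{DD} &= S_k(T_k,\la)\,\Delta^D, & \Delta^{ND} &= \vv_k(T_k,\la)\,\Delta^D, \\
\Delta^{DK} &= S_k^{[1]}(T_k,\la)\,\Delta^D + S_k(T_k,\la)\,\Delta^K, & \Delta^{NK} &= \vv_k^{[1]}(T_k,\la)\,\Delta^D + \vv_k(T_k,\la)\,\Delta^K.
\end{align*}

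Substituting these into \eqref{defE} and expanding, the cross-terms containing $\Delta^D\Delta^K$ cancel thanks to the identity $S_k\vv_k - \vv_k S_k \equiv 0$, and what remains is $\bigl(S_k\vv_k^{[1]} - S_k^{[1]}\vv_k\bigr)(T_k,\la)\cdot\bigl(\Delta^D(\la)\bigr)^2 = W_k(T_k,\la)\,\bigl(\Delta^D(\la)\bigr)^2$. Using the $x$-independence of the generalized Wronskian together with $W_k(0,\la) = -1$ (which follows from \eqref{ic}), we conclude $A(\la) = -\bigl(\Delta^D(\la)\bigr)^2$. I do not expect any serious difficulty; the one bookkeeping point to keep an eye on is the corner case $r-1=1$, where $v_p$ is a boundary vertex of $g_p^*$ and \eqref{split} cannot literally be invoked inside $g_p^*$, but there the formula for $\Delta^K$ reduces trivially to the single term $\Delta_1^N$ (matching at a degree-one vertex being just Neumann) and the computation above goes through verbatim.
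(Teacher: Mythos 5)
Your proposal is correct and follows essentially the same route as the paper: splitting $g_p$ at $v_p$ to obtain the four factorizations $\Delta^{DD}=S_k\Delta^D$, $\Delta^{ND}=\vv_k\Delta^D$, $\Delta^{DK}=S_k^{[1]}\Delta^D+S_k\Delta^K$, $\Delta^{NK}=\vv_k^{[1]}\Delta^D+\vv_k\Delta^K$, substituting into \eqref{defE}, and invoking $W_k(T_k,\la)\equiv-1$. The extra bookkeeping you include (the explicit product/sum formulas for $\Delta^D$, $\Delta^K$ and the degree-two corner case) is consistent with, though not spelled out in, the paper's shorter argument.
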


\begin{proof}
Considering the split of the subtree $g_p$ by the vertex $v_p$ and using the formula \eqref{split}, we derive the relations
\begin{align*}
& \Delta^{DD}(\la) = S_k(T_k,\la) \Delta^D(\la), \quad 
\Delta^{DK}(\la) = S_k(T_k,\la) \Delta^K(\la) + S_k^{[1]}(T_k,\la) \Delta^D(\la), \\
& \Delta^{ND}(\la) = \vv_k(T_k,\la) \Delta^D(\la), \quad \Delta^{NK}(\la) = \vv_k(T_k,\la) \Delta^K(\la) + \vv_k^{[1]}(T_k,\la) \Delta^D(\la).
\end{align*}
Substituting them into \eqref{defE} and using $W_k(T_k,\la) \equiv -1$, we get the claim.
\end{proof}

According to Corollary~\ref{cor:asymptD}, we have the asymptotics
\begin{equation} \label{asymptDp}
\Delta_p^N(\rho^2) = \Delta_p^{N,0}(\rho^2) + \rho^{2-B_p} \kappa_p^N(\rho), \quad
\Delta_p^D(\rho^2) = \Delta_p^{D,0}(\rho^2) + \rho^{1-B_p} \kappa_p^D(\rho),
\end{equation}
where $\kappa_p^N(\rho)$ and $\kappa_p^D(\rho)$ are Paley-Wiener functions of the class $PW(\mathcal T)$, $\mathcal T := \mbox{length}(G_p)$.

Now, we are ready to prove the uniform stability of Auxiliary Problem~\ref{ap:aux}.

\begin{proof}[Proof of Theorem~\ref{thm:uniaux}]
Denote by $b_p$ the number of the boundary vertices of the tree $g_p$. Then, the tree $g_p^*$ has $(b_p-1)$ boundary vertices not counting $v_p$. Using Corollaries~\ref{cor:asymptD} and~\ref{cor:uniD}, and taking \eqref{eqPW} into account, we get the following uniform estimates for $\sigma, \, \tilde \sigma \in \mathcal B_{\Omega}$:
\begin{gather} \label{estDelD}
\Delta^D(\rho^2) \asymp \rho^{1-b_p}, \quad \rho \in \ga, \qquad
\| \rho^{b_p-1} \hat \Delta^D(\rho^2) \|_{L_2(\ga)} \le c \|\hat\sigma \|_{L_2(g_p^*)}, \\ \label{estDelNK}
\Delta^{NK}(\rho^2) \asymp \rho^{2-b_p}, \quad \rho \in \ga, \qquad
\| \rho^{b_p-2} \hat \Delta^{NK}(\rho^2) \|_{L_2(\ga)} \le C \| \hat\sigma \|_{L_2(g_p)}, \\ \label{estDelDK}
\Delta^{DK}(\rho^2) \asymp \rho^{1-b_p}, \quad \rho \in \ga, \qquad
\| \rho^{b_p-1} \hat \Delta^{DK}(\rho^2) \|_{L_2(\ga)} \le C \|
\hat\sigma \|_{L_2(g_p)}. 
\end{gather}

Lemma~\ref{lem:E} together with \eqref{estDelD} imply
\begin{equation} \label{estE}
A(\rho^2) \asymp \rho^{2(1-b_p)}, \quad \rho \in \ga, \quad
\| \rho^{2(b_p-1)} \hat A(\rho^2) \|_{L_2(\ga)} \le C \| \hat\sigma \|_{L_2(g_p^*)}.
\end{equation}

Using \eqref{estDk}, \eqref{defE1}, \eqref{estDelNK}, and \eqref{estDelDK}, we obtain
\begin{equation} \label{estE1}
|A_1(\rho^2)| \le C |\rho|^{3-b-b_p}, \quad \rho \in \ga, \quad
\| \rho^{3 - b - b_p} \hat A_1(\rho^2) \|_{L_2(\ga)} \le C \bigl( \| \hat\sigma \|_{L_2(g_p)} + \de_k \bigr),
\end{equation}
where $\de_k$ is defined in \eqref{defdek}.

Note that $B_p-1 + b_p = b$. So, using \eqref{cram}, \eqref{estE}, and \eqref{estE1}, we get
$$
\| \rho^{B_p-1} \hat \Delta_p^D(\rho^2) \|_{L_2(\ga)} \le C \bigl( \| \hat \sigma \|_{L_2(g_p)} + \de_k\bigr).
$$
Analogously, we obtain the estimate for $\| \rho^{B_p-2} \hat \Delta_p^N(\rho^2) \|_{L_2(\ga)}$. Taking \eqref{asymptDp} and \eqref{eqPW} into account, we arrive at \eqref{uniaux}.
\end{proof}

\section{Stable algorithm} \label{sec:alg}

In this section, we develop an algorithm for unique and stable reconstruction of the potential $\sigma$ on the tree $G$.
Moreover, we prove Theorem~\ref{thm:locex} on the local stability of Inverse Problem~\ref{ip:main}.

\subsection{Reconstruction on a boundary edge}

For reconstruction on the boundary edges, we derive the main equation of the method of spectral mappings. Our technique is quite similar to \cite{FIY08}, so we outline it briefly. 

Consider a potential $\sigma \in L_2(G)$ and the zero potential $\sigma^0 \equiv 0$. Let an index $k$ of a boundary vertex $v_k \in \partial G'$ be fixed. Choose the contour $\Gamma$ according to \eqref{cont} so that the zeros of $\Delta(\la)$ and $\Delta^0(\la)$ lie inside $\Gamma$. Furthermore, choose radii $\{ R_N \}$ so that the estimates \eqref{Dbelow} hold for $\Delta(\la)$ and $\Delta^0(\la)$ and define the contours \eqref{defcont}. The contour integration implies the relation
$$
S_k^0(x,\la) = S_k(x, \la) + \frac{1}{2\pi i}\lim_{N \to \infty} \oint_{\Gamma_N \cup c_N} D_k^0(x, \la, \mu) (M_k - M_k^0)(\mu) S_k(x,\mu) \, d\mu, \quad x \in [0,T_k],
$$
where
$$
D_k^0(x,\la,\mu) := \int_0^x S_k^0(x,\la) S_k^0(x,\mu) \, dx.
$$

Recall that $S_k^0(x,\la) = \dfrac{\sin \rho x }{\rho}$, $\la = \rho^2$, $\mu = \theta^2$.
For fixed $x \in [0,\pi]$, $\la$, and $\mu \in c_N$, the following estimates hold:
$$
|S_k(x, \mu)| \le \frac{C}{|\theta|}, \quad
|D_k^0(x, \la, \mu)| \le \frac{C}{|\rho||\theta|}, \quad
|(M_k - M_k^0)(\mu)| \le C|\theta|.
$$
Here and below, constants $C$ depend only on $\sigma$. Also, we have $\mbox{length}(c_N) \sim \sqrt{R_N}$. Consequently
$$
\lim_{N \to \infty} \int_{c_N} D_k^0(x, \la, \mu) (M_k - M_k^0)(\mu) S_k(x,\mu) \, d\mu = 0.
$$
Hence
$$
S_k^0(x,\la) = S_k(x, \la) + \frac{1}{2\pi i} \int_{\Gamma} D_k^0(x, \la, \mu) (M_k - M_k^0)(\mu) S_k(x,\mu) \, d\mu
$$
for $x \in [0,T_k]$ and $\la \in \mathbb C$.
Representing $S_k(x, \la)$ in the form \eqref{intS} and passing to the contour $\ga$ in the $\rho$-plane, we obtain
\begin{equation} \label{maineq}
\psi_k(x, \rho) + \frac{1}{\pi i} \int_{\ga} r_k^0(x, \rho, \theta) \psi_k(x, \theta) \, d\theta = F_k(x, \rho),
\end{equation}
where
\begin{gather} \label{defrk0}
r_k^0(x, \rho, \theta) := \mathscr M_k(\theta) \int_0^x \sin \rho t \sin \theta t \, dt, \quad \mathscr M_k(\theta) := \theta^{-1} (M_k - M_k^0)(\theta^2), \\ \label{defFk}
F_k(x, \rho) := \int_0^x f_k(x, t) \sin \rho t \, dt, \quad
f_k(x, t) := -\frac{1}{\pi i} \int_{\ga} \mathscr M_k(\theta) \sin \theta x \sin \theta t \, d \theta.
\end{gather}

For each fixed $x \in [0,T_k]$, the relation \eqref{maineq} can be considered as an equation in $L_2(\ga)$ with respect to $\psi_k(x,.)$.
Analogously to \eqref{L2ga}, we get $\mathscr M_k \in L_2(\ga)$. Therefore, we obtain
$$
\int_{\ga} \int_{\ga} |r_k^0(x, \rho, \theta)|^2 |d\theta| |d\rho| \le C,
$$
similarly to \cite[Lemma~4]{FIY08}. Furthermore, we have $f_k(x, .) \in L_2(0,x)$ and so $F_k(x, .) \in L_2(\ga)$. 

Thus, equation \eqref{maineq} can be rewritten in the form
\begin{equation} \label{mainop}
(I + H_k^0(x)) \psi_k(x) = F_k(x), \quad x \in [0,T_k],
\end{equation}
where $I$ is the identity operator in $L_2(\ga)$ and
\begin{equation} \label{defHk0}
H_k^0(x) \kappa(\rho) := \frac{1}{\pi i} \int_{\ga} r_k^0(x, \rho, \theta) \kappa(\theta) \, d \theta
\end{equation}
is the Hilbert-Schmidt operator in $L_2(\ga)$ for each fixed $x \in [0,T_k]$. 

Analogously to \cite[Theorem 3]{FIY08}, one can show that $(I + H_k^0(x))$ has a bounded inverse operator on $L_2(\ga)$:
\begin{equation} \label{invHk0}
(I + H_k^0(x))^{-1} = I - H_k(x),
\end{equation}
where
\begin{gather*}
H_k(x) \kappa(\rho) := \frac{1}{\pi i} \int_{\ga} r_k(x, \rho, \theta) \kappa(\theta) \, d \theta, \\
r_k(x, \rho, \theta) := \rho \theta \mathscr M_k(\theta) \int_0^x S_k(t,\rho^2) S_k(t,\theta^2) \, dt.
\end{gather*}
Hence, the main equation \eqref{maineq} is uniquely solvable. Using its solution $\psi_k(x, \rho)$, one can find $S_k(x,\la)$ by \eqref{intS} and then recover the potential by the formula
\begin{equation} \label{recsi0}
\sigma_k(x) = \frac{1}{2 \pi i} \int_{\Gamma} \left( S_k(x, \la) S_k^0(x, \la) - \frac{1}{2\la}\right)(M_k - M_k^0)(\la) \, d\la,
\end{equation}
which is a special case of \eqref{recsi}.

\subsection{Transition through a vertex}

The solution of Auxiliary Problem~\ref{ap:aux} is based on the linear algebraic system \eqref{sys}. However, not every small perturbation of $\Delta(\la)$ and $\Delta_k(\la)$ preserves a solution $\Delta_p^N(\la)$ and $\Delta_p^D(\la)$ of \eqref{sys} in the class of entire analytic functions having the asymptotics \eqref{asymptDp}. Therefore, in order to develop a stable algorithm for the unique reconstruction, we use the sampling approach.

Introduce the points
\begin{equation} \label{defnu}
\nu_n := \frac{\pi n}{\mathcal T} + i \tau, \quad \mu_n = \nu_n^2, \quad n \in \mathbb Z,
\end{equation}
where $\tau > 0$ is sufficiently large. Then, the sequence $e_n(t) := \exp(i \nu_n t)$ ($n \in \mathbb Z$) is a Riesz basis in $L_2(-\mathcal T, \mathcal T)$. We readily obtain the following analog of Whittaker-Kotel'nikov-Shannon Theorem, which is a special case of Kramer's Lemma (see \cite[Theorem~2.1]{ABF09} and \cite{Hig96}).

\begin{lem} \label{lem:Riesz}
The mapping of a function $\mathcal F$ to the sequence $\{ \mathcal F(\nu_n) \}_{n \in \mathbb Z}$ is a linear isomorphism between $PW(\mathcal T)$ (with the $L_2(\mathbb R)$-norm) and $l_2$. The inverse mapping is given by the formula
\begin{equation} \label{expF}
\mathcal F(\rho) = \sum_{n = -\infty}^{\infty} \mathcal F(\nu_n) \frac{\sin (\rho + \overline{\nu}_n) \mathcal T}{(\rho + \overline{\nu}_n) \mathcal T}.
\end{equation}
\end{lem}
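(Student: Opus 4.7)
The plan is to identify the sample values $\{\mathcal F(\nu_n)\}$ with the Fourier coefficients of a weighted version of the Paley-Wiener density $f$ and then invoke Plancherel and Parseval. Starting from $\mathcal F(\rho) = \int_{-\mathcal T}^{\mathcal T} f(t) e^{i\rho t}\, dt$ with $f \in L_2(-\mathcal T, \mathcal T)$, I would set $g(t) := f(t) e^{-\tau t}$; since $e^{-\tau t}$ is bounded above and below away from zero on $[-\mathcal T,\mathcal T]$, this gives $\|g\|_{L_2(-\mathcal T,\mathcal T)} \asymp \|f\|_{L_2(-\mathcal T,\mathcal T)}$. Specializing $\rho = \nu_n$ yields
$$
\mathcal F(\nu_n) = \int_{-\mathcal T}^{\mathcal T} g(t) \exp(i \pi n t/\mathcal T)\, dt,
$$
which, up to the constant factor $\sqrt{2\mathcal T}$, are the Fourier coefficients of $g$ in the standard orthonormal basis $\{\exp(-i\pi n t/\mathcal T)/\sqrt{2\mathcal T}\}_{n\in\mathbb Z}$ of $L_2(-\mathcal T, \mathcal T)$.

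Chaining three norm equivalences then establishes the isomorphism: Plancherel gives $\|\mathcal F\|_{L_2(\mathbb R)} \asymp \|f\|_{L_2(-\mathcal T,\mathcal T)}$, the weighting gives $\|f\|_{L_2(-\mathcal T,\mathcal T)} \asymp \|g\|_{L_2(-\mathcal T,\mathcal T)}$, and Parseval applied to $g$ gives $\|g\|_{L_2(-\mathcal T,\mathcal T)} \asymp \|\{\mathcal F(\nu_n)\}\|_{l_2}$. Injectivity follows because $\mathcal F(\nu_n) \equiv 0$ forces $g \equiv 0$ and hence $\mathcal F \equiv 0$, while surjectivity follows by reversing the construction: any prescribed $\{c_n\} \in l_2$ produces $g \in L_2(-\mathcal T, \mathcal T)$ as a Fourier series, whence $f := g \cdot e^{\tau t}$ and its Fourier transform $\mathcal F \in PW(\mathcal T)$ satisfy $\mathcal F(\nu_n) = c_n$.

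For the reconstruction formula, I would substitute the Fourier series
$$
g(t) = \frac{1}{2\mathcal T} \sum_{n\in\mathbb Z} \mathcal F(\nu_n) \exp(-i\pi n t/\mathcal T),
$$
which converges in $L_2(-\mathcal T, \mathcal T)$, into $\mathcal F(\rho) = \int_{-\mathcal T}^{\mathcal T} g(t) \exp(i(\rho - i\tau) t)\, dt$, swap summation and integration (justified by the continuity of the Fourier transform $L_2(-\mathcal T, \mathcal T) \to L_2(\mathbb R)$), and compute the elementary integrals $\int_{-\mathcal T}^{\mathcal T} e^{i \beta t}\, dt = 2 \sin(\beta \mathcal T)/\beta$. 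After matching indices, this produces the stated cardinal-type series \eqref{expF}. The argument is a standard Whittaker-Kotel'nikov-Shannon-type sampling theorem along the horizontal line $\mbox{Im}\, \rho = \tau$; no serious obstacles are anticipated, and the only point requiring care is the $L_2(\mathbb R)$-convergence of the interpolation series, which is inherited from the $L_2(-\mathcal T, \mathcal T)$-convergence of the Fourier series of $g$ via the boundedness of the Fourier transform.
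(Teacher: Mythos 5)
Your proof is correct and takes essentially the same route as the paper: the paper expands $f$ in the system $e_n^*(t)=\tfrac{1}{2\mathcal T}e^{i\overline{\nu}_n t}$ biorthogonal to the Riesz basis $\{e^{i\nu_n t}\}$ and then uses Plancherel together with the Riesz-basis norm equivalence, which is precisely your reduction $g=f e^{-\tau t}$ to an ordinary Fourier series written in abstract language (the weight being the explicit reason that basis is Riesz). Your computation moreover produces the kernel $\sin\bigl((\rho-\nu_n)\mathcal T\bigr)/\bigl((\rho-\nu_n)\mathcal T\bigr)$, which is the correctly indexed form of \eqref{expF} (the paper's pairing of $\mathcal F(\nu_n)$ with $\rho+\overline{\nu}_n$ involves a harmless index flip $n\mapsto-n$).
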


\begin{proof}
The biorthonormal basis to $\{ e_n \}_{n \in \mathbb Z}$ consists of the functions $e_n^*(t) := \frac{1}{2\mathcal T}\exp(i \overline{\nu}_n t)$ ($n \in \mathbb Z$):
$$
(e_n, e_k^*) = \int_{-\mathcal T}^{\mathcal T} \overline{e_n(t)} e_k^*(t) \, dt =
\begin{cases}
    1, & n = k, \\
    0, & n \ne k.
\end{cases}.                
$$

In view of \eqref{PW}, the values $\{ \mathcal F(\nu_n) \}_{n \in \mathbb Z}$ are the coordinates of the function $f$ in the basis $\{ e_n^* \}_{n \in \mathbb Z}$:
\begin{equation} \label{expf}
f(t) = \sum_{n = -\infty}^{\infty} \mathcal F(\nu_n) e_n^*(t).
\end{equation}
Substituting \eqref{expf} into \eqref{PW}, we arrive at \eqref{expF}.

Applying Plancherel's theorem
\begin{equation*}
\| \mathcal F \|_{L_2(\mathbb R)} = \sqrt{2 \pi} \| f \|_{L_2(-\mathcal T,\mathcal T)}.
\end{equation*}
and the Riesz-basis property
$$
\left\| \sum f_n e_n^* \right\|_{L_2(-\mathcal T, \mathcal T)} \asymp \| \{ f_n \} \|_{l_2}, \quad \forall \{ f_n \} \in l_2,
$$
concludes the proof.
\end{proof}

Using the relations \eqref{cram} and \eqref{asymptDp}, we get
\begin{equation} \label{findkap}
\kappa_p^N(\nu_n) = \nu_n^{B_p-2} \left( \frac{A_1(\mu_n)}{A(\mu_n)} - \Delta_p^{N,0}(\mu_n)\right),
\quad \kappa_p^D(\nu_n) = \nu_n^{B_p-1} \left( \frac{A_2(\mu_n)}{A(\mu_n)} - \Delta_p^{D,0}(\mu_n)\right).
\end{equation}

Thus, one can construct the characteristic functions $\Delta_p^{N,0}(\la)$ and $\Delta^{D,0}_p(\la)$ for $\sigma \equiv 0$,
find $\kappa_p^N(\rho)$ and $\kappa_p^D(\rho)$ using \eqref{findkap} and \eqref{expF}, and then obtain $\Delta_p^N(\la)$ and $\Delta_p^D(\la)$ by \eqref{asymptDp}. 

\subsection{Reconstruction on the whole tree}

Summarizing the above results, we obtain the following algorithm for solving Inverse Problem~\ref{ip:main} on the whole tree.

\begin{alg} \label{alg:main}
Suppose that the metric tree $G$, the characteristic functions $\Delta(\la)$ and $\Delta_k(\la)$ for all $v_k \in \partial G'$ are given. We have to find the potentials $\{ \sigma_j \}_1^m$.
\begin{enumerate}
\item Initialize the current set of the boundary vertices $V := \partial G'$ and, for each $v_k \in V$, the characteristic functions $\Delta_k^D(\la) := \Delta(\la)$ and $\Delta_k^N(\la) := \Delta_k(\la)$. 
\item For each $v_k \in V$, recover the potential $\sigma_k$ as follows:
\begin{enumerate}
\item Construct the Weyl function $M_k(\la) := -\dfrac{\Delta_k^N(\la)}{\Delta_k^D(\la)}$ and $M_k^0(\la)$ for the tree $G$ with the zero potential $\sigma^0 \equiv 0$.
\item Choose appropriate contours $\Gamma$ and $\ga$ by \eqref{cont} so that all the zeros of $\Delta_k^D(\la)$ and $\Delta_k^{D,0}(\la)$ lie inside $\Gamma$.
\item Construct the functions $r_k^0(x, \rho, \theta)$ and $F_k(x, \rho)$ by \eqref{defrk0} and \eqref{defFk}, respectively.
\item Solving the main equation \eqref{maineq}, find $\psi_k(x, \rho)$.
\item Find $S_k(x, \la)$ by \eqref{intS} and construct $\sigma_k$ by \eqref{recsi0}.
\end{enumerate}
\item Form the new set $V$ of such the vertices $v_p$ that the potentials on their directed subtrees $g_p$ are completely recovered, while the potential $\sigma_p$ is not. If $V = \{ v_{m+1} \}$, then terminate the algorithm.
\item For each $v_p \in V$, choose any $v_k$ such that $e_k = [v_k, v_p]$ and solve Auxiliary Problem~\ref{ap:aux} (meaning that $\Delta = \Delta_k^D$, $\Delta_k = \Delta_k^N$) as follows:
\begin{enumerate}
\item Using the known potentials $\{ \sigma_j \}$ on the subtree $g_p$, find the solutions $S_j(x_j, \la)$ and $\vv_j(x_j, \la)$ ($x_j \in [0,T_j]$) of the initially value problems \eqref{eqv}, \eqref{ic} together with their quasi-derivatives and construct the characteristic functions $\Delta^{DD}(\la)$, $\Delta^{DK}(\la)$, $\Delta^{ND}(\la)$, and $\Delta^{NK}(\la)$ according to Definition~\ref{def:char}.
\item Determine the characteristic functions $\Delta_p^{D,0}(\la)$ and $\Delta_p^{N,0}(\la)$ by Definition~\ref{def:char} for the zero potential $\sigma^0 \equiv 0$.
\item Construct the functions $A(\la)$, $A_1(\la)$, and $A_2(\la)$ by \eqref{defE}, \eqref{defE1}, and \eqref{defE2}, respectively.
\item Find the values $\kappa_p^N(\nu_n)$ and $\kappa_p^D(\nu_n)$ by \eqref{findkap} at the points $\{ \nu_n \}_{n \in \mathbb Z}$ given by \eqref{defnu}. Therein, the values $B_p$ and $\mathcal T$ are the corresponding parameters of the subtree $G_p$ defined by the structure of the metric graph.
\item Recover the functions $\kappa_p^N(\rho)$ and $\kappa_p^D(\rho)$ from their values at the points $\{ \nu_n \}_{n \in \mathbb Z}$ by the formula \eqref{expF}.
\item Determine $\Delta_p^N(\la)$ and $\Delta_p^D(\la)$ by \eqref{asymptDp}.
\end{enumerate}
\item At this step, the functions $\Delta_p^D(\la)$ and $\Delta_p^N(\la)$ have already been found for each $v_p \in V$. Go to step~2.
\end{enumerate}
\end{alg}

In the case of $\Delta(\la)$ and $\Delta_k(\la)$ ($v_k \in \partial G'$) being the spectral data of some potential $\sigma \in L_2(G)$, Algorithm~\ref{alg:main} allows one to uniquely reconstruct this potential according to the arguments of Sections~\ref{sec:bound}--\ref{sec:alg}. Let us show that, moreover, Algorithm~\ref{alg:main} is stable under small perturbations of the spectral data.

\begin{thm} \label{thm:loc}
Let $\sigma = \{ \sigma_j \}_1^m$ be a fixed potential in $L_2(G)$. Then, there exists $\eps > 0$ (depending on $\sigma$) such that, for any functions $\tilde \Delta(\la)$ and $\tilde \Delta_k(\la)$ $(k \colon v_k \in \partial G')$ satisfying the conditions $\rho^{b-1}\hat \Delta(\rho^2) \in PW(\mathbf{T})$, $\rho^{b-2} \hat \Delta_k(\rho^2) \in PW(\mathbf{T})$, and $\de \le \eps$, where $\de$ is defined by \eqref{defde}, Algorithm~\ref{alg:main} is executed correctly and, as a result, uniquely determines some potential $\tilde \sigma \in L_2(G)$. Moreover, the stability estimate \eqref{estsi} holds, where the constant $C$ depends only on $\sigma$.
\end{thm}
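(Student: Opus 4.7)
The plan is to prove Theorem~\ref{thm:loc} by induction on the iterations of Algorithm~\ref{alg:main}, using perturbation arguments to reduce each step to the uniform stability already established in Theorems~\ref{thm:unibound} and~\ref{thm:uniaux}. Fix $\Omega := \|\sigma\|_{L_2(G)} + 1$; then all constants from Sections~\ref{sec:bound} and~\ref{sec:return} depend only on $\sigma$. Choose $\eps > 0$ so small that every intermediate potential produced by the algorithm from data with $\de \le \eps$ lies in $\mathcal B_{\Omega}$. This property is verified \emph{a posteriori} as the induction proceeds, since at each step the newly reconstructed data will turn out to be within $C\de \le C\eps$ of the true data.

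For each boundary edge processed in Step~2, the main equation \eqref{mainop} with perturbed data reads $(I + \tilde H_k^0(x))\tilde\psi_k(x) = \tilde F_k(x)$. By \eqref{invHk0}, the operator $(I + H_k^0(x))^{-1}$ is bounded on $L_2(\ga)$ uniformly in $x \in [0,T_k]$ with a bound depending only on $\sigma$. From the definition \eqref{defrk0} of $r_k^0$ and the reasoning in the proof of Theorem~\ref{thm:unibound}, the Hilbert-Schmidt norm of $\tilde H_k^0(x) - H_k^0(x)$ is bounded by $C\|\tilde{\mathscr M}_k - \mathscr M_k\|_{L_2(\ga)} \le C\de_k$. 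For $\eps$ sufficiently small, a Neumann-series perturbation argument produces a bounded inverse of $I + \tilde H_k^0(x)$ uniformly in $x$, so the perturbed main equation has a unique solution $\tilde\psi_k$. The recovered $\tilde\sigma_k$ computed from \eqref{recsi0} then satisfies $\|\hat\sigma_k\|_{L_2(0,T_k)} \le C\de_k \le C\de$ by the contour-integration argument of Theorem~\ref{thm:unibound}.

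For the transition through a vertex in Step~4, assume by induction that the potentials on $g_p$ have been reconstructed with $\|\hat\sigma\|_{L_2(g_p)} \le C\de$ and that the current input characteristic functions for Step~4 differ from the true ones by at most $C\de$ in the weighted norms of \eqref{defdek}. Lemma~\ref{lem:uniSC} and Definition~\ref{def:char} imply that $\tilde\Delta^{DD}, \tilde\Delta^{DK}, \tilde\Delta^{ND}, \tilde\Delta^{NK}$ on $g_p$ are close to the true ones in the weighted norms appearing in \eqref{estDelD}--\eqref{estDelDK}. Lemma~\ref{lem:E} and the lower bound $\Delta^D(\rho^2) \asymp \rho^{1-b_p}$ on $\ga$ then yield $|\tilde A(\nu_n)| \ge c|\nu_n|^{2(1-b_p)}$ with $c > 0$ independent of $\de$, at every sampling point \eqref{defnu}, provided $\eps$ is small enough. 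Thus formula \eqref{findkap} is well-defined. The Cramer's rule computation of the proof of Theorem~\ref{thm:uniaux}, combined with Lemma~\ref{lem:Riesz} applied to the Paley-Wiener numerators $\hat A_1, \hat A_2$, yields the bound \eqref{uniaux} for $\hat \Delta_p^D$ and $\hat \Delta_p^N$ reconstructed by sampling. This closes the induction; iterating along the tree up to the root produces \eqref{estsi}.

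The main obstacle is the sampling step~4(d)--(f), where one must show that arbitrary perturbed data, not a priori corresponding to any potential, still generates via \eqref{findkap} and \eqref{expF} functions that lie in $PW(\mathcal T)$ with the correct weighted $L_2$-norm. The decisive ingredient is the uniform lower bound $|\tilde A(\nu_n)| \ge c|\nu_n|^{2(1-b_p)}$, which rests on the factorization $A = -(\Delta^D)^2$ from Lemma~\ref{lem:E} together with Corollary~\ref{cor:asymptD}. Once this lower bound is secured, the $\ell_2$-estimates on the numerators $\hat A_1(\nu_n), \hat A_2(\nu_n)$ (obtained from the Paley-Wiener structure of the factors via Lemma~\ref{lem:Riesz}) transfer directly to $\ell_2$-estimates on $\hat\kappa_p^N(\nu_n), \hat\kappa_p^D(\nu_n)$, yielding the desired local stability with constants depending only on $\sigma$.
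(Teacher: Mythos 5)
Your proposal follows essentially the same route as the paper's proof: a perturbation (Neumann-series) argument for the main equation on a boundary edge based on the bound $\|\hat H_k^0(x)\|_{L_2(\ga)\to L_2(\ga)} \le C\delta$ and the uniform boundedness of $(I+H_k^0(x))^{-1}$, followed, for the transition through a vertex, by the estimates from the proof of Theorem~\ref{thm:uniaux} together with the lower bound from Lemma~\ref{lem:E} and the sampling Lemma~\ref{lem:Riesz} to secure the $\ell_2$-bounds on $\{\hat\kappa_p^N(\nu_n)\}$, $\{\hat\kappa_p^D(\nu_n)\}$, all iterated inductively along the tree with constants depending only on $\sigma$. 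The only cosmetic difference is that you invoke ``the contour-integration argument of Theorem~\ref{thm:unibound}'' for the edge estimate, whereas the paper records $\|\hat\psi_k(x,\cdot)\|_{L_2(\mathbb R)} \le C\delta$ and reads the bound for $\hat\sigma_k$ off the two instances of \eqref{recsi0} (since $\tilde M_k$ need not be a genuine Weyl function, \eqref{recsi} itself is not directly applicable), which amounts to the same computation.
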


\begin{proof}
To prove the correct execution of Algorithm~\ref{alg:main}, we have to show that (i) the main equation at step 2.4 is uniquely solvable and (ii) the sequences $\{ \tilde \kappa_p^N(\nu_n) \}_{n \in \mathbb Z}$ and $\{ \tilde \kappa_p^D(\nu_n) \}_{n \in \mathbb Z}$ at step 4.4 belong to $l_2$.

For $\Delta(\la)$ and $\Delta_k(\la)$ being the characteristic functions of the corresponding eigenvalue problems $\mathcal L$ and $\mathcal L_k$ for the fixed potential $\sigma$, equation \eqref{mainop} is uniquely solvable in view of the explicit construction \eqref{invHk0} of the inverse operator $(I + H_k^0(x))^{-1}$. Suppose that $\tilde \Delta(\la)$ and $\tilde \Delta_k(\la)$ satisfy the hypothesis of this theorem for some $\eps > 0$. If $\eps$ is small enough, then the zeros of $\tilde \Delta(\la)$ lie inside the contour $\Gamma$ chosen for $\Delta(\la)$ at step 2.2. Using \eqref{defde}, \eqref{estDk}, \eqref{defrk0}, and \eqref{defHk0}, we get
\begin{equation} \label{difHk0}
\| \hat{\mathscr M}_k(\theta) \|_{L_2(\ga)} \le C \de, \quad 
\| \hat H_k^0(x) \|_{L_2(\ga) \to L_2(\ga)} \le C \de, \quad x \in [0,T_k].
\end{equation}
It can be shown that $H_k^0(x)$ is continuous by $x \in [0,T_k]$ in the operator norm $\| . \|_{L_2(\ga) \to L_2(\ga)}$. Hence, the operator $(I + H_k^0(x))^{-1}$ is uniformly bounded for $x \in [0,T_k]$. Consequently, for sufficiently small $\eps > 0$ and for each $x \in [0,T_k]$, the operator $(I + \tilde H_k^0(x))$ has a bounded inverse on $L_2(\ga)$. 
In addition, using \eqref{defFk}, we obtain 
\begin{equation} \label{difFk}
\tilde F_k(x,.) \in L_2(\ga) \quad \text{and} \quad \| F(x, .) \|_{L_2(\ga)} \le C \de, \quad x \in [0,T_k]
\end{equation}
Therefore, the equation
$$
(I + \tilde H_k^0(x)) \tilde \psi_k(x) = \tilde F_k(x)
$$
has a unique solution $\tilde \psi_k(x.,) \in L_2(\ga)$ for each $x \in [0,T_k]$, which concludes the proof of (i). Obviously, the estimates \eqref{difHk0} and \eqref{difFk} together with \eqref{eqPW} imply 
\begin{equation} \label{difpsik}
\| \hat \psi_k(x, .) \|_{L_2(\mathbb R)} \le C \de, \quad x \in [0,T_k].
\end{equation}

Next, consider step 4 for some $v_p$ and $v_k$. Assume that 
$$
\de_k + \| \hat \sigma \|_{L_2(g_p)} \le C \de,
$$
which follows from the previous steps of the algorithm. Following the proof of Theorem~\ref{thm:uniaux} in Section~\ref{sec:return}, we obtain the estimates \eqref{estE} and \eqref{estE1}.
\end{proof}

Note that Theorem~\ref{thm:loc} does not assert that $\tilde \Delta(\la)$ and $\tilde \Delta_k(\la)$ are the characteristic functions corresponding to the constructed potential $\tilde \sigma$. Moreover, choosing different $v_k$ at step~4 of Algorithm~\ref{alg:main}, we may get different results $\tilde \sigma$. Anyway, under the assumption of existence for the inverse problem solution, Theorem~\ref{thm:loc} immediately yields Theorem~\ref{thm:locex}.

\section*{Appendix}

\setcounter{equation}{0}
\renewcommand\theequation{A.\arabic{equation}}

\setcounter{thm}{0}
\renewcommand\thethm{A.\arabic{thm}}

Here we provide the proof of Lemma~\ref{lem:uniSC}, based on the technique of \cite{Bond21} for constructing transformation operators.

Consider the Sturm-Liouville equation
\begin{equation} \label{eqv1}
-(y^{[1]})' - \sigma(x) y^{[1]} - \sigma^2(x) y = \lambda y, \quad x \in (0,T),
\end{equation}
where $\sigma$ is a complex-valued function of $L_2(0,T)$, $y^{[1]} := y' - \sigma y$ is the quasi-derivative, and $\la = \rho^2$ is the spectral parameter.

Denote by $\vv(x, \la)$ the solution of equation \eqref{eqv1} satisfying the initial conditions $\vv(0,\la) = 1$, $\vv^{[1]}(0,\la) = 0$.
In \cite{Bond21}, the following representations have been obtained:
\begin{align} \label{transK}
    & \vv(x, \la) = \cos \rho x + \int_0^x \mathscr K(x, t) \cos \rho t \, dt, \\ \label{transN}
    & \vv^{[1]}(x, \la) = -\rho \sin \rho x + \rho \int_0^x \mathscr N(x, t) \sin \rho t \, dt + \mathscr C(x),
\end{align}
where the functions $\mathscr K(x,t)$, $\mathscr N(x,t)$, and $\mathscr C(x)$ are constructed as the series:
\begin{equation} \label{ser}
\mathscr K = \sum_{n = 0}^{\infty} \mathscr K_n, \quad \mathscr N = \sum_{n = 0}^{\infty} \mathscr N_n, \quad \mathscr C = \sum_{n = 0}^{\infty} \mathscr C_n,
\end{equation}
whose terms are defined recursively (see \cite{Bond21, KB25}):
\begin{align} \label{defK0}
    \mathscr K_0(x, t) & = \tfrac{1}{2} \sigma\left( \tfrac{x + t}{2}\right) + \tfrac{1}{2} \sigma\left( \tfrac{x - t}{2}\right) - \tfrac{1}{2} \int_0^x \sigma^2(s) \, ds - \tfrac{1}{4} \int_t^x \left( \sigma^2(\tfrac{x - s}{2}) - \sigma^2(\tfrac{x + s}{2})\right) \, ds, \\ \label{defN0}
    \mathscr N_0(x, t) & = \tfrac{1}{2} \sigma\left( \tfrac{x + t}{2}\right) - \tfrac{1}{2} \sigma\left( \tfrac{x - t}{2}\right) + \tfrac{1}{2} \int_0^x \sigma^2(s) \, ds + \tfrac{1}{4} \int_t^x \left( \sigma^2(\tfrac{x - s}{2}) + \sigma^2(\tfrac{x + s}{2})\right) \, ds, \\ \label{defC0}
    \mathscr C_0(x) & = -\tfrac{1}{2} \int_0^x \sigma^2(t)\,dt - \tfrac{1}{4} \int_0^x (\sigma^2(\tfrac{x + t}{2}) + \sigma^2(\tfrac{x - t}{2})) \, dt, \\ \nonumber
    \mathscr K_{n+1}(x,t) & =  \frac{1}{2} \int_{x - t}^x \bigl( \mathscr K_n(s, t - x + s) + \mathscr N_n(s, t-x+s) \bigr) \sigma(s) \, ds \\ \nonumber & + \frac{1}{2}\int_{\frac{x - t}{2}}^{x - t} \bigl( \mathscr K_n(s, x - s - t) - \mathscr N_n(s, x - s - t) \bigr) \sigma(s) \, ds \\ \nonumber & + \frac{1}{2}\int_{\frac{x + t}{2}}^x \bigl( \mathscr K_n(s, x - s + t) - \mathscr N_n(s, x - s + t) \bigr) \sigma(s) \, ds \\ \nonumber & - \frac{1}{2} \biggl( \int_0^x \sigma^2(s) \, ds \int_0^{\min\{s, x-t\}} \mathscr K_n(s, s-\xi) \, d\xi \\ \nonumber & + \int_0^{x-t} \sigma^2(s) \, ds \int_s^{\min\{ 2s, x-t\}} \mathscr K_n(s, \xi - s) \, d\xi \\ \label{defKn} & - \int_{\frac{x+t}{2}}^x \sigma^2(s) \, ds \int_t^{2s-x} \mathscr K_n(s, x + \xi - s) \, d\xi\biggr)  - \int_0^{x - t} \mathscr C_n(s) \sigma(s) \, ds, \\ \nonumber
    \mathscr N_{n+1}(x,t) & =-\frac{1}{2} \int_{x - t}^x \bigl( \mathscr K_n(s, t - x + s) + \mathscr N_n(s, t - x + s) \bigr) \sigma(s) \, ds \\ \nonumber & - \frac{1}{2}\int_{\frac{x - t}{2}}^{x - t} \bigl( \mathscr K_n(s, x - s - t) - \mathscr N_n(s, x - s - t) \bigr) \sigma(s) \, ds \\ \nonumber & + \frac{1}{2}\int_{\frac{x + t}{2}}^x \bigl( \mathscr K_n(s, x - s + t) - \mathscr N_n(s, x - s + t) \bigr) \sigma(s) \, ds \\ \nonumber & +\frac{1}{2} \biggl( \int_0^x \sigma^2(s) \, ds \int_0^{\min\{s, x-t\}} \mathscr K_n(s, s-\xi) \, d\xi \\ \nonumber & + \int_0^{x-t} \sigma^2(s) \, ds \int_s^{\min\{ 2s, x-t\}} \mathscr K_n(s, \xi - s) \, d\xi \\ \nonumber & + \int_{\frac{x+t}{2}}^x \sigma^2(s) \, ds \int_t^{2s-x} \mathscr K_n(s, x + \xi - s) \, d\xi\biggr) + \int_0^{x - t} \mathscr C_n(s) \sigma(s) \, ds, \\ \nonumber
    \mathscr C_{n+1}(x) & = -\frac{1}{2} \int_0^x \sigma^2(t) \, dt \biggl( \int_{x-t}^x \mathscr K_n(t, \xi - x + t) \, d\xi + \int_{x - 2t}^{x - t} \mathscr K_n(t, x-\xi-t) \, d\xi\biggr) \\ \nonumber &
    - \int_0^x \mathscr C_n(s) \sigma(s) \, ds.
\end{align}

The functions $\mathscr K_n(x,t)$ and $\mathscr N_n(x,t)$ ($n \ge 1$) are continuous on $\mathbf{D} := \bigl\{ (x,t) \colon 0 \le t \le x \le T \bigr\}$ and the functions $\mathscr C_n(x)$ ($n \ge 0$), on $[0,T]$. Moreover, the following estimates hold:
\begin{equation} \label{estKn}
|\mathscr K_n(x, t)|, \, |\mathscr N_n(x, t)|, \, |\mathscr C_n(x)| \le a^n Q^n(x) \sqrt{\tfrac{x^{n-1}}{(n-1)!}}, \quad n \ge 1, \quad (x,t) \in \mathbf{D},
\end{equation}
where $Q(x) = \| \sigma \|_{L_2(0,x)}$ and $a$ is some constant depending on $\| \sigma \|_{L_2(0,T)}$ (see \cite{KB25}). Consequently, the series of $\mathscr K_n$ and $\mathscr N_n$ starting from $n \ge 1$ and the series of $\mathscr C_n$ in \eqref{ser} converge absolutely and uniformly to continuous functions. Adding the corresponding terms $\mathscr K_0$ and $\mathscr N_0$, we conclude that, for each fixed $x \in (0,T]$, the functions $\mathscr K(x,.)$ and $\mathscr N(x,.)$ belong to $L_2(0,x)$.

Now, along with \eqref{eqv}, consider the similar equation with another complex-valued potential $\tilde \sigma \in L_2(0,T)$. Let us show that, for each fixed $x \in (0,T]$, there hold
\begin{equation} \label{difest}
\| \hat{\mathscr K}(x, .) \|_{L_2(0,x)}, \, \| \hat{\mathscr N}(x, .) \|_{L_2(0,x)}, \, |\hat{\mathscr C}(x)| \le C \| \hat \sigma \|_{L_2(0,x)},
\end{equation}
where the constant $C$ depends only on $\max \bigl\{ \| \sigma \|_{L_2(0,T)}, \| \tilde \sigma \|_{L_2(0,T)} \bigr\}$. 

Using \eqref{defK0}--\eqref{defC0}, we get
\begin{equation} \label{est0}
\| \hat{\mathscr K}_0(x,.) \|_{L_2(0,x)}, \, \| \hat{\mathscr N}_0(x,.) \|_{L_2(0,x)}, \,  |\hat{\mathscr C}_0(x)| \le a \| \hat \sigma \|_{L_2(0,x)}. 
\end{equation}

Next, by induction, we obtain the estimates
\begin{equation} \label{estdifK}
|\hat{\mathscr K}_n(x, t)|, \, |\hat{\mathscr N}_n(x, t)|, \, |\hat{\mathscr C}_n(x)| \le a^n \| \hat \sigma \|_{L_2(0,x)} Q^{n-1}(x) \sqrt{\tfrac{x^{n-1}}{(n-1)!}}, \quad n \ge 1, \quad (x,t) \in \mathbf{D},
\end{equation}
where 
\begin{equation} \label{defQ}
Q(x) = \| \sigma_{max} \|_{L_2(0,x)}, \quad \sigma_{max}(x) = \max\{ |\sigma(x)|, |\tilde \sigma(x)| \}
\end{equation}
and $a$ is a constant that depends only on $\max \bigl\{ \| \sigma \|_{L_2(0,T)}, \| \tilde \sigma \|_{L_2(0,T)} \bigr\}$.

For instance, let us prove \eqref{estdifK} for $\hat{\mathscr K}_{n+1}(x,t)$ ($n \ge 0$) assuming that the claimed estimates for $\hat{\mathscr K}_n$, $\hat{\mathscr N}_n$, $\hat{\mathscr C}_n$ are already established. Consider the first term in \eqref{defKn}:
\begin{equation} \label{defIn}
I_n(x,t) := 
\frac{1}{2} \int_{x - t}^x \bigl( \mathscr K_n(s, t - x + s) + \mathscr N_n(s, t-x+s) \bigr) \sigma(s) \, ds.
\end{equation}

Combining \eqref{defK0} and \eqref{defN0}, we get
$$
\mathscr K_0(x,t) + \mathscr N_0(x,t) = \sigma\left( \tfrac{x+t}{2} \right) + \tfrac{1}{2} \int_t^x \sigma^2\left( \tfrac{x+s}{2}\right) \,ds.
$$
Hence
\begin{align*}
I_0(x, t) & = \frac{1}{2} \int_{x-t}^x \sigma\left( \tfrac{t-x+2s}{2}\right) \sigma(s) \, ds + \frac{1}{4} \int_{x-t}^x \sigma(s) \, ds \int_{t-x+s}^s \sigma^2 \left( \tfrac{s+\xi}{2}\right) d\xi, \\
\hat{I}_0(x,t) & = \frac{1}{2} \int_{x-t}^x \hat \sigma\left( \tfrac{t-x+2s}{2}\right) \sigma(s) \, ds + \frac{1}{2} \int_{x-t}^x \tilde \sigma\left( \tfrac{t-x+2s}{2}\right) \hat\sigma(s) \, ds \\ & + \frac{1}{4} \int_{x-t}^x \hat \sigma(s) \, ds \int_{t-x+s}^s \sigma^2 \left( \tfrac{s+\xi}{2}\right) d\xi + \frac{1}{4} \int_{x-t}^x \tilde \sigma(s) \, ds \int_{t-x+s}^s \hat\sigma\left( \tfrac{s+\xi}{2}\right) \Bigl( \sigma\left( \tfrac{s +\xi}{2}\right) + \tilde\sigma\left( \tfrac{s+\xi}{2}\right)\Bigr) d\xi
\end{align*}
Applying the Cauchy-Bunyakovsky-Schwarz inequality, we estimate
\begin{equation} \label{estI1}
|\hat{I}_0(x,t)| \le a \| \hat\sigma \|_{L_2(0,x)}, \quad (x,t) \in \mathbf{D}.
\end{equation}

Analogously, consider the fourth term from \eqref{defKn}:
\begin{align} \nonumber
J_{n}(x, t) & := \frac{1}{2} \int_0^x \sigma^2(s) \, ds \int_0^{\min\{s, x-t\}} \mathscr K_n(s, s-\xi) \, d\xi, \\ \nonumber
\hat{J}_n(x,t) & = \frac{1}{2} \int_0^x \hat\sigma(s) (\sigma(s) + \tilde \sigma(s))\, ds \int_0^{\min\{s, x-t\}} \mathscr K_n(s, s-\xi) \, d\xi \\ \label{difJn} & + \frac{1}{2} \int_0^x \tilde \sigma^2(s) \, ds \int_0^{\min\{s, x-t\}} \hat{\mathscr K}_n(s, s-\xi) \, d\xi.
\end{align}

In view of \eqref{defK0}, the function $\mathscr K_0(x,t)$ is square-integrable for each fixed $x \in (0,T]$ and $\| \mathscr K_0(x, .) \|_{L_2(0,x)} \le a$. Using the latter estimate and \eqref{est0}, we obtain
\begin{equation} \label{estintK0}
\left| \int_0^{\min\{s, x-t\}} \mathscr K_n(s, s-\xi) \, d\xi \right| \le a, \quad \left| \int_0^{\min\{s, x-t\}} \hat{\mathscr K}_n(s, s-\xi) \, d\xi \right| \le a \| \hat\sigma \|_{L_2(0,x)}.
\end{equation}

Using \eqref{difJn} for $n = 0$ and \eqref{estintK0}, we deduce
\begin{equation} \label{estJ1} 
|\hat{J}_0(x,t)| \le a \| \hat\sigma \|_{L_2(0,x)}, \quad (x,t) \in \mathbf{D}.
\end{equation}

Estimates similar to \eqref{estI1} and \eqref{estJ1} can be obtained for the other terms in \eqref{defKn}, so we arrive at \eqref{estdifK} for $\hat{\mathscr K}_1(x,t)$.

Proceed to proving \eqref{estdifK} for $\mathscr K_{n+1}(x,t)$ with $n \ge 1$. As above, we confine ourselves by analyzing the terms $I_n$ and $J_n$, since the other terms in \eqref{defKn} can be treated analogously. It follows from \eqref{defIn} that
\begin{align} \nonumber
\hat{I}_n(x,t) & = \frac{1}{2} \int_{x-t}^x \bigl(\hat{\mathscr K}_n(s,t-x+s) + \hat{\mathscr N}_n(s,t-x+s)\bigr) \sigma(s) \, ds \\ \label{difIn}
& + \frac{1}{2} \int_{x-t}^x \bigl(\tilde{\mathscr K}_n(s,t-x+s) + \tilde{\mathscr N}_n(s,t-x+s)\bigr) \hat \sigma(s) \, ds.
\end{align}

Note that, for $\tilde{\mathscr K}_n(x,t)$ and $\tilde{\mathscr N}_n(x,t)$, the estimates \eqref{estKn} with $Q(x)$ given by \eqref{defQ} are valid. So, using \eqref{estKn}, \eqref{estdifK}, and \eqref{difIn}, we obtain
\begin{align} \label{smIn}
|\hat{I}_n(x,t)| & \le a^n \| \hat \sigma \|_{L_2(0,x)} \int_{x-t}^x Q^{n-1}(s) \sqrt{\frac{s^{n-1}}{(n-1)!}} |\sigma(s)| \, ds + a^n \int_{x-t}^x Q^n(s) \sqrt{\frac{s^{n-1}}{(n-1)!}} |\hat \sigma(s)| \, ds.
\end{align}
Taking \eqref{defQ} into account and applying the Cauchy-Bunyakovsky-Schwarz inequality, we get
\begin{align} \nonumber
& \int_0^x Q^{n-1}(s) \sqrt{\frac{s^{n-1}}{(n-1)!}} |\sigma(s)| \, ds \\ \label{sm1} & \le \sqrt{\int_0^x \sigma_{max}^2(s) \, ds \int_0^s \sigma_{\max}^{2(n-1)}(\xi) \, d\xi} \cdot \sqrt{\int_0^x \frac{s^{n-1}}{(n-1)!}\, ds} = \frac{Q^n(x)}{\sqrt{n}} \cdot \sqrt{\frac{x^n}{n!}}, \\ \nonumber
& \int_0^x Q^n(s) \sqrt{\frac{s^{n-1}}{(n-1)!}} |\hat \sigma(s)| \, ds \\ \label{sm2}
& \le Q^n(s) \sqrt{\int_0^x |\hat \sigma(s)|^2 \, ds} \cdot \sqrt{\int_0^x \frac{s^{n-1}}{(n-1)!}\, ds} = \| \hat \sigma\|_{L_2(0,x)} Q^n(x) \sqrt{\frac{x^n}{n!}}.
\end{align}
Combining \eqref{smIn}, \eqref{sm1}, and \eqref{sm2}, we obtain
\begin{equation} \label{estIn}
|\hat{I}_n(x,t)| \le a^{n+1} \| \hat \sigma \|_{L_2(0,x)} Q^n(x) \sqrt{\frac{x^n}{n!}}, \quad (x,t) \in \mathbf{D}.
\end{equation}

Proceed to estimating $\hat{J}_n$. It follows from \eqref{estKn}, \eqref{estdifK}, and \eqref{difJn} that
\begin{align*}
|\hat{J}_n(x,t)| & \le a^n \int_0^x |\hat \sigma(s)| \sigma_{max}(s) Q^n(s) \sqrt{\frac{s^{n-1}}{(n-1)!}} s \, ds \\ & + 
\frac{a^n}{2} \int_0^x |\tilde \sigma(s)|^2 \| \hat\sigma \|_{L_2(0,s)} Q^{n-1}(s) \sqrt{\frac{s^{n-1}}{(n-1)!}} s \, ds \\ & \le
a^n \sqrt{\frac{x^{n+1}}{(n-1)!}} \Biggl( \int_0^x |\hat \sigma(s)| \cdot \sigma_{max}(s) Q^n(s) \, ds \\ & + \frac{1}{2} \| \hat \sigma \|_{L_2(0,x)} \int_0^x |\tilde \sigma(s)| \cdot \sigma_{max}(s) Q^{n-1}(s) \, ds\Biggr).
\end{align*}
Applying the Cauchy-Bunyakovsky-Schwarz inequality to the latter integrals similarly to \eqref{sm1}, we arrive at the estimate for $\hat{J}_n$ analogous to \eqref{estIn}. Consequently, we obtain \eqref{estdifK} for $\hat{\mathscr K}_{n+1}(x,t)$.

Summing up the series
$$
\hat{\mathscr K} = \sum_{n = 0}^{\infty} \hat{\mathscr K}_n, \quad
\hat{\mathscr N} = \sum_{n = 0}^{\infty} \hat{\mathscr N}_n, \quad
\hat{\mathscr C} = \sum_{n = 0}^{\infty} \hat{\mathscr C}_n
$$
and using \eqref{est0}--\eqref{estdifK}, we prove \eqref{difest}. Clearly, the estimates \eqref{difest} imply \eqref{univv} in Lemma~\ref{lem:uniSC}. The estimates \eqref{uniS} are obtained analogously.

\medskip

{\bf Funding.} This work was supported by Grant 24-71-10003 of the Russian Science Foundation, https://rscf.ru/en/project/24-71-10003/.

\medskip

{\bf Acknowledgement.} The author is grateful to Professor Maria A. Kuznetsova for her careful reading and valuable comments.

\medskip

\noindent Natalia Pavlovna Bondarenko \\

\noindent 1. Department of Mechanics and Mathematics, Saratov State University, 
Astrakhanskaya 83, Saratov 410012, Russia, \\

\noindent 2. Department of Applied Mathematics, Samara National Research University, \\
Moskovskoye Shosse 34, Samara 443086, Russia, \\

\noindent 3. S.M. Nikolskii Mathematical Institute, RUDN University, 6 Miklukho-Maklaya St, Moscow, 117198, Russia, \\

\noindent 4. Moscow Center of Fundamental and Applied Mathematics, Lomonosov Moscow State University, Moscow 119991, Russia.\\

\noindent e-mail: {\it bondarenkonp@sgu.ru}

\end{document}